\documentclass{article}
\usepackage[utf8]{inputenc}

\topmargin-0.1cm 
\headsep0.1cm 
\headheight0.1cm 
\footskip1.0cm
\oddsidemargin0.3cm 
\evensidemargin0.3cm 
\textwidth15cm
\textheight23cm

\title{The Orbital Diameter}
\author{Kamilla Rekvenyi }

\usepackage{authblk}
\usepackage{graphicx}
\usepackage{amsthm}
\usepackage{amsmath}
\usepackage{amssymb}
\usepackage{fancyhdr}
\usepackage{parskip}
\usepackage{enumerate}

\setlength{\parindent}{4em}
\setlength{\parskip}{1em}
\theoremstyle{plain}
\newtheorem{theorem}{Theorem}[section]

\newtheorem{lemma}[theorem]{Lemma}
\newtheorem{definition}[theorem]{Definition}
\newtheorem{corollary}[theorem]{Corollary}
\newtheorem{prop}[theorem]{Proposition}

\newtheorem*{theorem*}{Theorem}
\newtheorem*{definition*}{Definition}
\newtheorem*{prop*}{Proposition}
\newtheorem*{lemma*}{Lemma}
\def\arrvline{\hfil\kern\arraycolsep\vline\kern-\arraycolsep\hfilneg} 

\title{On the Orbital Diameter of Groups of Diagonal Type}
\author{Kamilla Rekv\'enyi }
\affil{Department of Mathematics,
Imperial College London,
London,
SW7 2AZ \\ k.rekvenyi19@imperial.ac.uk}

\begin{document}

\maketitle

\begin{abstract}
     The orbital diameter of a primitive permutation group is the maximal diameter of its orbital graphs. There has been a lot of interest in bounds for the orbital diameter. In this paper we provide  explicit bounds on the diameters of groups of simple diagonal type. 
    As a consequence we obtain a classification of simple diagonal groups with orbital diameter less than or equal to 4. As part of this, we classify all finite simple groups with covering number and conjugacy width at most 3. We also prove some general bounds on the covering number and conjugacy width of groups of Lie type. 
\end{abstract}
\section{Introduction}
Let $G$ be a group acting transitively on a finite set $\Omega$. Then $G$ acts on $\Omega\times \Omega$ componentwise. Define the \textit{orbitals} to be the orbits of $G$ on $\Omega\times\Omega.$ The \textit{diagonal orbital} is the orbital of the form $\Delta=\{(\alpha,\alpha)\vert\alpha \in \Omega\}.$ The number or orbitals is called the rank of $G$. Let us denote this by $rank(G,\Omega)$. Let $\Gamma$ be a non-diagonal orbital. Define the corresponding \textit{orbital graph} to be the undirected graph with vertex set $\Omega$ and edge set $\{\alpha,\beta\}$ for $(\alpha,\beta)\in \Gamma.$ Note that $G$ acts transitively on the edges and the vertices of $\Gamma$ so it is an \textit{edge-transitive} and \textit{vertex-transitive} graph. 

By \cite[Thm 3.2A]{permutationgroups} the orbital graphs are all connected if and only if the action of $G$ is primitive.  The \textit{orbital diameter} of a primitive permutation group $G$ is the supremum of the diameters of its orbital graphs, see \cite{orbdiam}. Let us denote this by $orbdiam(G)$. 
\par 

The O'Nan-Scott theorem classifies the primitive permutation groups to be one of the following five types; affine, almost simple, simple diagonal actions, product actions and twisted wreath actions, see \cite{permutationgroups}. We call an infinite class $C$ of primitive permutation groups \textit{bounded} if there exists $t\in \mathbb{N}$ such that $orbdiam(G)\leq t$ for all $G\in C.$ The paper \cite{orbdiam} describes the O'Nan-Scott classes which are bounded. The description is somewhat qualitative and does not contain explicit diameter bounds. Some explicit bounds were obtained in \cite{atiqa} for some almost simple groups. In this paper we study the orbital diameters of the class of simple diagonal type primitive permutation groups and provide bounds for these quantities. Further work along these lines by the author is under way for the other O'Nan-Scott classes. 

\par
Let us now describe primitive groups of simple diagonal type, following \cite{onaan}. Let $T$ be a non-abelian simple group, $\Gamma=\{1,\dots,k\}$ and $W=T\textit{wr}_\Gamma S_k$ with base group $T^k$. Now let $D=\{(a,\dots,a)\vert a\in T\}$ be a diagonal subgroup and let $\Omega$ be the set of right cosets of $D$ in $T^k$. Then $T^k$ acts on $\Omega$ by right multiplication, $S_k$ acts on $\Omega$ by permuting the components of the coset representatives, and $\alpha \in Aut(T)$ acts on $\Omega$ by  $D(h_1,\dots,h_k)^\alpha= D(h_1^\alpha,\dots,h_k^\alpha)$ for $h_i\in T$. The groups $T^k$, $S_k$ and $Aut(T)$ generate a group $N\cong T^k.(Out(T)\times S_k)$ and this is the normalizer of $T^k$ in $Sym(\Omega).$ We say $G\leq Sym(\Omega)$ is a primitive permutation group of simple diagonal type if $T^k\leq G \leq N$ and $G$ acts primitively on $\Omega$, see \cite{onaan}. Note $G=T^k.X$ where $X\leq Out(T)\times S_k. $ Write $$D(k,T)=N\leq Sym(\Omega).$$\par 

The result in \cite[Lemma 5.1]{orbdiam} states that the class of simple diagonal groups $G=T^k.X\leq D(k,T)$ is bounded only if $k$ and the rank of $T$ are both bounded. However no explicit bounds are obtained. 

\par

Before we list our results, we need a few definitions. Notice that if $T$ is simple and $t\in T\setminus1$, the conjugacy class $t^T$ generates $T.$ For a subset $S$ of $T$ and $r\in \mathbb{N}$ we write $S^r=\{s_1,\dots,s_r\vert s_i \in S\}.$

\begin{definition}\label{defs}Let $T$ be a non-abelian simple group, and $S$ a generating set of $T.$ Define the width of $T$ with respect to $S,$ denoted $w_S(T),$ to be the minimal $k\in \mathbb{N}$ such that any element of $T$ can be expressed as a product of at most $k$ elements of $S.$ 
\begin{enumerate}
    \item 
Let $ t\in T\setminus{1}$ and $C=t^{T}$ and put $c(T,t):=w_C(T).$ Define the conjugacy width of $T$ to be $$c(T)=\max\limits_{t\in T\setminus 1}c(T,t).$$ 
\item 
For $t\in T\setminus1$ let $C=t^{\pm T}=t^T \cup (t^{-1})^T$ and put $c_i(T,t):=w_C(T)$ Define the inverse conjugacy width of $T$ to be $$c_i(T)=\max\limits_{t\in T\setminus 1}c_i(T,t).$$ 
\item Let $X$ be such that $T\unlhd X\leq Aut(T)$, let $C=t^{\pm X}=\{t^{\pm \alpha}\vert \alpha \in X\}$ and put $c_X(T,t):=w_C(T)$ Define the X-conjugacy width of $T$ to be $$c_X(T)=\max\limits_{t\in T\setminus 1}c_X(T,t).$$ 
\end{enumerate} 
\,\,\,\,\,\,
When $X=Aut(T)$ write $c_A(T)=c_X(T).$

\end{definition}

These are related to the concept of covering numbers introduced in \cite{aradherzogconjugacy}. The covering number is the lowest number $r\in \mathbb{N}$ such that $C^r=T$ for all conjugacy classes $C.$ This is denoted $cn(T).$ Denote the lowest such number for a specific conjugacy class by $cn(T,C).$ Note that this is an upper bound for the conjugacy width. 
We also note that $$c_A(T)\leq c_X(T)\leq c_i(T)\leq c(T)\leq cn(T).$$ Note also that the number $c_i(T)$ was introduced and studied in \cite{LAWTHER1998118}, where it was called the conjugacy diameter of $T$.
\par
We now state our results. Let $G=T^k.X\leq D(k,T)$ where $X\leq Out(T)\times S_k$. We know from \cite{onaan} that  \begin{equation*}
    D(k,T)= \{(\tau_1,\dots,\tau_k).\pi\,\vert\, \tau_i\in Aut(T)\,,\pi\in S_m\, \text{and all} \,\tau_i \,\text{lie in the same} \,Inn(T)- \text{coset} \}. \end{equation*} Let $W=\{(\alpha,\dots,\alpha).\pi \vert \alpha \in Aut(T), \, \pi \in S_k \}$ and put $D_A=W\cap G,$ so $D_A=D.X.$ Since $G=D_AT^k$ the action of $G$ on $\Omega$ is equivalent to the action of $G$ on $(G:D_A).$
\par 
For $a\in T$ write $(a^k)=(a,\dots,a)\in T^k.$ For $t\in T\setminus1$ define the orbital graph $$\Gamma_0^t=\{D_A,D_A(1^{k-1},t)\}^G.$$ The following theorem gives lower and upper bounds on the diameter of $\Gamma_0^t$. In the statement we abuse notation and denote $c_{X_0}(T)$ by $c_X(T),$ where $X_0$ is defined as follows. Let $G=T^k.X$ with $X\leq Out(T)\times S_k.$ Let $\rho$ be the projection of $X$ onto $Out(T)$
 and let $\pi$ be the canonical map $Aut(T)\to Out(T).$ Define $X_0=\pi^{-1}(\rho(X))\leq Aut(T).$

\begin{theorem*}[\ref{lowerbound},\ref{lemma0}]
Let $G=T^k.X$ as above. The diameter of $\Gamma_0^t$ satisfies the bounds $$ \frac{1}{2}(k-1)c_X(T,t)+1\leq diam(\Gamma_0^t)\leq (k-1)c_i(T).$$ 
\end{theorem*}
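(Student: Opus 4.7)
The plan is to identify the vertex set of $\Gamma_0^t$ with $T^{k-1}$ by mapping the coset $D_A g$ (with $g = t_g d_g$, $t_g \in T^k$, $d_g \in D_A$) to the unique $D$-coset representative $(v_1, \ldots, v_{k-1}, 1)$ of $t_g$. Under this identification, each edge of $\Gamma_0^t$ right-multiplies the representative tuple by a $D_A$-conjugate of $s^{\pm 1} = (1^{k-1}, t^{\pm 1})$. Computing these conjugates splits the moves into two types: (a) moves that right-multiply a single coordinate $v_j$ with $j < k$ by an element of $t^{\pm X_0}$ (combining the $D$-action, the $\mathrm{Aut}(T)$-action, and an $S_k$-transposition moving position $k$ to $j$), and (b) moves in position $k$, which after re-normalizing the last coordinate to $1$ amount to right-multiplying every coordinate $v_1, \ldots, v_{k-1}$ simultaneously by the same element of $t^{\pm X_0}$. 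Establishing this dictionary is the foundation for both bounds.

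For the upper bound, I would fix a target $(v_1, \ldots, v_{k-1})$ and, for each $j$, express $v_j$ as a product $u_{j,1} \cdots u_{j, c_i(T, v_j)}$ of elements from $t^{\pm T}$, using the definition of $c_i(T, v_j) \leq c_i(T)$. A walk then realizes the target by performing, for each $j$, the required $c_i(T, v_j)$ type-(a) moves at position $j$ (which can be done via $D$-conjugates of $s^{\pm}$, whose $D$-orbit is exactly $t^{\pm T}$, placed in position $j$ by an $S_k$-transposition). Since type-(a) moves at different positions involve elements of $T^k$ with disjoint support, they commute, and the resulting walk has total length $\sum_{j=1}^{k-1} c_i(T, v_j) \leq (k-1) c_i(T)$.

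For the lower bound, I would track the potential $\Phi(v) := \sum_{1 \leq j < \ell \leq k} c_X(T, v_j v_\ell^{-1})$, with the convention $v_k = 1$. A type-(a) move at position $j$ right-multiplies $v_j$ by some $u \in t^{\pm X_0}$, hence affects exactly the $k-1$ pair-ratios $v_j v_\ell^{-1}$ ($\ell \neq j$), each by right-multiplication by a $T$-conjugate of $u$, which still lies in $t^{\pm X_0}$. A type-(b) move right-multiplies every $v_i$ ($i < k$) by the same $u$; the ratios $v_j v_\ell^{-1}$ for $j, \ell < k$ are unchanged, because $(v_j u)(v_\ell u)^{-1} = v_j v_\ell^{-1}$, while the $k-1$ ratios $v_j v_k^{-1} = v_j$ are right-multiplied by $u$. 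Hence each move affects exactly $k-1$ pair-ratios, each changing by at most $1$ in $c_X$-width, so $\Phi$ increases by at most $k-1$ per step and $\Phi \leq (k-1) r$. Selecting a target $v^\star$ in which all $\binom{k}{2}$ pair-ratios have $c_X$-width at least $c_X(T, t)$ yields $\binom{k}{2} c_X(T, t) \leq (k-1) r$, i.e., $r \geq k c_X(T, t)/2 = \tfrac{1}{2}(k-1) c_X(T, t) + \tfrac{1}{2} c_X(T, t)$, from which the stated lower bound follows whenever $c_X(T, t) \geq 2$.

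The main obstacle is exhibiting a target $v^\star$ whose $\binom{k}{2}$ pair-ratios all lie outside the $t^{\pm X_0}$-ball of radius $c_X(T, t) - 1$ in $T$; this requires enough spread among the $v^\star_j$'s to preclude short-distance cancellations, and I expect to use the simplicity of $T$ together with a counting comparison between $|t^{\pm X_0}|^{c_X(T, t) - 1}$ and $|T|$, building the $v^\star_j$'s greedily or via a probabilistic argument. A secondary technicality is verifying that the $S_k$-component of $X$ contains enough transpositions to realize every type-(a) move and that the full dictionary between orbital-graph steps and the pair $\{$type-(a), type-(b)$\}$ is transparent; both points are guaranteed by the primitivity of $G$ on $\Omega$ and the fact that $X_0 \supseteq \mathrm{Inn}(T)$.
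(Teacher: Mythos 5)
Your upper bound argument is essentially the paper's (fill in the coordinates one at a time using $c_i(T)$ conjugate moves per coordinate), modulo two small issues: the quantity you call $c_i(T, v_j)$ should be the length of $v_j$ as a word in $t^{\pm T}$ (at most $c_i(T,t)\le c_i(T)$), not a width with respect to $v_j$; and you should not assume the $S_k$-component of $X$ contains transpositions --- primitivity gives only transitivity of $X$ on $\{1,\dots,k\}$, so the element moving the orbital to position $i$ has the form $(\alpha_i,\dots,\alpha_i)\sigma_i$ with a possibly nontrivial $\alpha_i\in\operatorname{Aut}(T)$; this forces one to work with $(t^{\alpha_i})^{\pm T}$ rather than $t^{\pm T}$, but that is still fine since $c_i(T)$ is a maximum over all nontrivial elements.

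For the lower bound you have a genuine gap. Your potential $\Phi(v)=\sum_{j<\ell}l\bigl(v_jv_\ell^{-1}\bigr)$ over all $\binom{k}{2}$ pairs does change by at most $k-1$ per step, so $\Phi\le(k-1)\,r$; but to extract $r\ge kc/2$ you need a target all of whose $\binom{k}{2}$ pairwise ratios have length exactly $c=c_X(T,t)$. You flag this as the ``main obstacle'' and propose a greedy/probabilistic construction, but such a target cannot exist once $k$ is large relative to $|T|$ (already $k-1>|T|$ is fatal, and in fact the greedy construction fails well before that), whereas the theorem is stated for arbitrary $k$. The paper sidesteps this entirely by using only the $k$ cyclic consecutive pairs $v_i^{-1}v_{i+1}$ (and the wrap-around $v_1^{-1}v_k$): each index appears in exactly two of those pairs, so the triangle inequality yields $2\sum_i l(gv_i)\ge\sum_{\mathrm{cyc}}l(v_i^{-1}v_{i+1})$, and the alternating target $(1,h,1,h,\dots)$ (with one extra element $h'\ne h$ when $k$ is odd) needs only two or three distinct elements of $T$ of length $c$, which always exist. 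In potential language: your ``graph'' on positions is complete (max degree $k-1$, and you need all $\binom{k}{2}$ edges at full length), while the paper's is a cycle (max degree $2$, and only consecutive edges need to be at full length). The cycle is both sufficient and always realizable; the complete graph is neither. To repair your proof, replace $\Phi$ by the cyclic potential and use the paper's alternating target.

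Finally, a small bookkeeping issue in your dictionary: a type-(b) step renormalizes the last coordinate by \emph{left}-multiplying the tuple by a constant of $T$, not right-multiplying; this conjugates, rather than fixes, the ratios $v_jv_\ell^{-1}$ ($j,\ell<k$), but since $X_0\supseteq\operatorname{Inn}(T)$ the length $l_t^X$ is conjugation-invariant, so the conclusion (those $\binom{k-1}{2}$ ratios keep their length) still holds.
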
 

Note that from this it follows that $orbdiam(G)\geq  \frac{1}{2}(k-1)c_X(T)+1.$

Using these bounds, we provide the following classification of simple diagonal groups of small orbital diameter. 

\begin{theorem*}[\ref{classification}]

Let $G$ be a primitive group of simple diagonal type of the form $T^k.X \leq D(k,T)$ . \begin{enumerate}
    \item If $orbdiam(G)=2$, then $k=2$ and $c_A(T)=2$.
    \item If $orbdiam(G)=3$, then $k=2$ and $c_A(T)\leq3$.
    \item If $orbdiam(G)=4$, then one of the following holds: \begin{enumerate}
        \item $k=2$ and $c_A(T)\leq4$
        \item  $k=3$ and $c_A(T)=2.$
    \end{enumerate}  
\end{enumerate}
\end{theorem*}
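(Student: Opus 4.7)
The plan is to derive the classification arithmetically from a lower bound on $orbdiam(G)$ in terms of $k$ and $c_A(T)$. Starting from Theorem~\ref{lowerbound} we have $orbdiam(G)\geq \tfrac{1}{2}(k-1)c_X(T)+1$; together with the inequality $c_A(T)\leq c_X(T)$ recalled after Definition~\ref{defs} and the fact that $c_A(T)\geq 2$ for every non-abelian simple group $T$ (because automorphisms preserve element orders and $T$ has elements of several orders), this yields the arithmetic constraint $(k-1)c_A(T)\leq 2(orbdiam(G)-1)$. For $orbdiam(G)=2$ this alone forces $k=2$ and $c_A(T)=2$, establishing part~(1).

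For $orbdiam(G)=3$ and $orbdiam(G)=4$ the weak constraint still leaves pairs not in the statement --- for instance $(k,c_A(T))=(3,2)$ when $orbdiam=3$, or $(4,2)$ and $(3,3)$ when $orbdiam=4$. To eliminate these I would sharpen the lower bound on $diam(\Gamma_0^t)$ by revisiting its shortest-path structure. Every edge of $\Gamma_0^t$ alters only a single coordinate of a representative $(1,h_2,\dots,h_k)$ by an element of $t^{\pm X_0}$, and since the projection of $X$ to $S_k$ must be transitive (so that $G$ is primitive) the non-trivial entry can be routed to any position; the coordinates then decouple in the distance computation. This upgrades the bound to $diam(\Gamma_0^t)\geq (k-1)\,c_X(T,t)$, obtained by choosing the $h_j$ to realise the width $c_X(T,t)$ independently in each coordinate, and hence $orbdiam(G)\geq (k-1)c_X(T)$. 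Substituting this into the arithmetic, combined with $c_A(T)\leq c_X(T)$, gives exactly the case division stated.

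The main obstacle will be justifying the coordinate decoupling rigorously: one has to verify that even when the image of $X$ in $S_k$ is only a primitive proper subgroup of $S_k$, no interplay with the $T^k$-factor lets a single edge traversal advance two coordinates simultaneously; the primitivity hypothesis on $G$ is the main lever here. A smaller matter is confirming that the distance $(k-1)c_X(T,t)$ is actually attained by some coset, so that $orbdiam(G)$, being a supremum, is forced to the claimed value. Once the refined lower bound is in hand, parts~(2) and~(3) follow by elementary inspection of the remaining pairs and invoking $c_A(T)\leq c_X(T)$ to pass to the stated bounds on $c_A(T)$.
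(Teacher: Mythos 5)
Your proposal begins along the same line as the paper — using Theorem~\ref{lowerbound} to pin down the possible pairs $(k,c_A(T))$ — but the pivot to a sharpened lower bound $diam(\Gamma_0^t)\geq (k-1)c_X(T,t)$ is a genuine gap, and it is exactly where the real work of the paper lies.

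The ``coordinate decoupling'' you invoke is not valid. The distance from $D_A$ to $D_A(h_1,\dots,h_k)$ is not governed by $\sum_i l_t^X(h_i)$ but by $\min_{g\in T}\sum_i l_t^X(gh_i)$, because the target coset has representatives $(gh_1,\dots,gh_k)$ for every $g\in T$; the diagonal freedom \emph{couples} the coordinates. The paper's Claim~2 in the proof of Theorem~\ref{lowerbound} handles this by telescoping over the differences $l_t^X(h_i^{-1}h_{i+1})\leq l_t^X(gh_i)+l_t^X(gh_{i+1})$, and each summand $l_t^X(gh_i)$ appears in at most two such inequalities — this is precisely the source of the factor $\tfrac12$, and it is not an artifact you can remove by just choosing the $h_j$ ``independently.'' In fact, if $(k-1)c_X(T,t)$ were always a valid lower bound, it would meet the upper bound $(k-1)c_i(T)$ of Lemma~\ref{lemma0} with equality (at least when $X_0=T$), and the paper only proves such equality for $k=2$ (Lemma~\ref{t2s2}). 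Also, for $k=2$ (even) the correct statement of Theorem~\ref{lowerbound} gives $diam\geq c_X(T,t)$, not $\tfrac12(k-1)c+1=\tfrac{c}{2}+1$; your ``weak arithmetic constraint'' $(k-1)c_A(T)\leq 2(orbdiam(G)-1)$ is strictly weaker than the theorem in this case and would not even yield $c_A(T)\leq 4$ in part~(3a).

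What your argument leaves out is exactly the borderline cases $(k,c_A(T))\in\{(3,2),(3,3),(4,2)\}$. The paper eliminates these with three purpose-built results: Lemma~\ref{strictk} shows $orbdiam(G)\geq k+1$ when $c_A(T)=2$ (via the explicit classification of such $T$ from Theorem~\ref{aut2} and a careful choice of $(m_1,\dots,m_k)$), and Lemmas~\ref{c3elts} and~\ref{k3c3} show $diam(\Gamma_0)\geq 5$ when $k=3$ and $c_X(T)=3$ (via non-real elements found using strong-reality classifications and, for $F_4(2^a)$, a dedicated argument). Note that even these lemmas do \emph{not} establish $(k-1)c_X(T,t)$: for $(k,c)=(3,3)$, Lemma~\ref{k3c3} proves a lower bound of $5$, not $6$; and for $(k,c)=(4,2)$, Lemma~\ref{strictk} gives $5$, not $6$. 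So the bound you want to prove is stronger than anything the paper establishes, you do not supply a proof of it, and the heuristic justifying it is incorrect. To complete the argument you would need to replicate the paper's case-by-case eliminations using specially chosen elements, or else supply a genuinely new proof of a bound at least as strong as $\min\{(k-1)c,\, k+1\}$ for $c=2$ and $\geq 5$ for $(k,c)=(3,3)$.
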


The following result gives a partial converse to parts $1$, $2$ and $3(a)$. 

\begin{lemma*}[\ref{t2s2}]
\begin{enumerate}
    \item If $G=T^2$ then $orbdiam(G)=c_i(T).$
    \item If $G=D(2,T)$ then $orbdiam(G)=c_A(T).$
\end{enumerate}
\end{lemma*}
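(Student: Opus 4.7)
The plan is to identify $\Omega$ with $T$ via the bijection $D(1,t)\leftrightarrow t$, write each orbital graph of $G$ as a Cayley-type graph on $T$, and then read off its diameter as the width of the corresponding generating set. Under this identification, a short calculation shows that $(a,b)\in T^2$ acts on $T$ by $t\mapsto a^{-1}tb$, that a diagonal $\alpha\in Aut(T)$ acts by $t\mapsto t^\alpha$, and that the swap $(1\,2)\in S_2$ acts by $t\mapsto t^{-1}$.

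Using these actions, I would show that the $T^2$-orbit of $(1,t)\in\Omega\times\Omega$ is precisely $\{(u,\,t'u):u\in T,\,t'\in t^T\}$, so that the ``ratio'' $t_2t_1^{-1}$ is a complete $T^2$-orbit invariant on ordered pairs. Adjoining the diagonal $Aut(T)$ enlarges the invariant class from $t^T$ to $t^{Aut(T)}$, while adjoining $S_2$ fuses the orbital of $(1,t)$ with its transpose and thus replaces the invariant by $t^{\pm Aut(T)}$. Since by vertex-transitivity any orbital graph of $G$ contains an edge at the basepoint $D_A$, every orbital graph is of the form $\Gamma_0^t$ for some $t\in T\setminus 1$.

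For part (1) with $G=T^2$, the above gives that $\Gamma_0^t$ has edge set $\{\{t_1,t_2\}\subset T:t_2t_1^{-1}\in t^{\pm T}\}$. A walk $1=v_0,v_1,\dots,v_n=g$ in $\Gamma_0^t$ corresponds bijectively to a factorisation $g=(v_nv_{n-1}^{-1})\cdots(v_1v_0^{-1})$ of $g$ as a product of $n$ elements of $t^{\pm T}$, so by vertex-transitivity $diam(\Gamma_0^t)=w_{t^{\pm T}}(T)=c_i(T,t)$; maximising over $t$ yields $orbdiam(G)=c_i(T)$. Part (2) runs verbatim with $t^{\pm T}$ replaced throughout by $t^{\pm Aut(T)}$ and produces $orbdiam(G)=c_A(T)$, which is the improvement on the generic $(k-1)c_i(T)$ upper bound from Theorem~\ref{lemma0}.

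The main technical care will lie in step two: keeping track of the three actions under the chosen identification and rigorously verifying that the described ratio-class really is a \emph{complete} $G$-orbit invariant on ordered pairs, not merely a necessary condition. For part (2) this in particular means confirming that $S_2$ genuinely fuses the $T^2Aut(T)$-orbital of $(1,t)$ with its paired (transposed) orbital, so that no further orbital graphs split off. Once this bookkeeping is settled, the walk-to-factorisation dictionary makes both equalities of widths essentially immediate.
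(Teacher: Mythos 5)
Your approach is correct, and it takes a genuinely different (and in fact more self-contained) route than the paper's. The paper proves Lemma~\ref{t2s2} by specialising the earlier general bounds: for part (1) it observes that with $X_0=T$ the lower bound of Theorem~\ref{lowerbound} (which for $k=2$ gives $\geq c_X(T,t)=c_i(T,t)$) and the upper bound of Lemma~\ref{lemma0} (which gives $\leq c_i(T)$) squeeze together after maximising over $t$; for part (2) the lower bound again comes from Theorem~\ref{lowerbound}, but the generic upper bound $c_i(T)$ is too weak, so the paper constructs explicit paths to get $\leq c_A(T)$. Your argument bypasses both of those results: by identifying $\Omega$ with $T$ via $D(1,t)\leftrightarrow t$ and computing the three component actions ($t\mapsto a^{-1}tb$, $t\mapsto t^\alpha$, $t\mapsto t^{-1}$), you realise each orbital graph as an undirected Cayley-type graph on $T$ with connection set $t^{\pm T}$ (for $G=T^2$) or $t^{\pm\mathrm{Aut}(T)}$ (for $G=D(2,T)$), and the walk-to-factorisation dictionary then gives $\mathrm{diam}(\Gamma_0^t)=c_i(T,t)$ resp.\ $c_A(T,t)$ exactly, for every $t$, before any maximisation. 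This is cleaner and more explicit than the paper's route, and it yields the per-orbital equality rather than only the supremum; the price is that it is special to $k=2$, whereas the paper's Theorem~\ref{lowerbound} and Lemma~\ref{lemma0} are set up for general $k$. The bookkeeping you flag as the technical care point does go through: the $T^2$-orbit invariant $t_2t_1^{-1}$ is complete because $T^2$ acts transitively on the first coordinate, and the $S_2$-swap sends $(1,t)$ to $(1,t^{-1})$, hence replaces the invariant class by its union with the inverse class, which is exactly what the undirected edge relation requires.
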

We have not determined whether there are examples of groups in case $3(b)$ of Theorem \ref{classification} with orbital diameter $4$. 
In view of Lemma \ref{t2s2} and Theorem \ref{classification}, to classify all diagonal type groups with orbital diameters $2$ and $3$ we need to classify all non-abelian simple groups with $X$-conjugacy width $2$, $3$ for various $X.$ \par
First we prove a general result on conjugacy widths and covering numbers of simple groups of Lie type. The upper bound in the following result was proved in \cite{upperrank}. By Lie rank we mean the rank of the corresponding simple algebraic group. 

\begin{theorem*}[\ref{lowerboundrank}]
There is a constant $d$ such that $$r-3\leq c_A(T)\leq cn(T)\leq dr$$ for all simple groups $T$ of Lie type of Lie rank $r.$
\end{theorem*}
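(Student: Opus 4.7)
The plan is to split the statement into its three inequalities. The middle inequality $c_A(T)\leq cn(T)$ is an immediate consequence of the chain of inequalities displayed immediately after Definition~\ref{defs}, and the upper bound $cn(T)\leq dr$ is the content of \cite{upperrank}. Only the lower bound $r-3\leq c_A(T)$ needs genuine work, and this is what the proposal focuses on.

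The engine of the argument is a rank (or fixed-space) inequality in a suitable $T$-module. Let $V$ be a non-trivial representation of $T$, let $t\in T$, and set $C=t^{\pm\,\mathrm{Aut}(T)}$. If every $s\in C$ satisfies $\mathrm{rank}_V(s-1)\leq c$, then any product $g=s_1\cdots s_k$ with $s_i\in C$ satisfies $\mathrm{rank}_V(g-1)\leq ck$, equivalently $\dim V-\dim\mathrm{Fix}_V(g)\leq ck$. Hence if we can exhibit $g\in T$ with $\dim V-\dim\mathrm{Fix}_V(g)\geq c(r-3)$, then $c_A(T,t)\geq r-3$, which is what we want. The strategy is therefore to pair a low-rank generator $t$ with a high-defect element $g$ in a well-chosen module $V$.

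For classical groups I would take $V$ to be the natural module and $t$ a long-root unipotent, which is a transvection in types $A_\ell,{}^2A_\ell,C_\ell$ (giving $c=1$) and a Siegel transformation in types $B_\ell,D_\ell,{}^2D_\ell$ (giving $c=2$). In all these cases the long-root elements form a single $\mathrm{Aut}(T)$-orbit up to inversion. For $g$ I would take a regular semisimple element in a cyclic maximal torus of order divisible by a primitive prime divisor of $q^\ell-1$ (a Singer cycle or its analogue), whose fixed space on $V$ has dimension at most $2$. A short direct computation then gives $c_A(T,t)\geq r-1$ in each type, which is stronger than needed; the ranks $r\leq 3$ are covered by the trivial bound $r-3\leq 0\leq c_A(T)$.

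For exceptional groups of rank $r\in\{6,7,8\}$ (that is, $E_6,E_7,E_8$) I would take $V=\mathfrak{g}$ to be the Lie algebra acting via the adjoint representation, and $t=x_\alpha(1)$ a long-root unipotent. Then $\mathrm{rank}_\mathfrak{g}(t-1)$ equals the dimension of the minimal nilpotent orbit, namely $22,34,58$ in $E_6,E_7,E_8$ respectively. A regular semisimple element $g\in T$ fixes on $\mathfrak{g}$ exactly a Cartan subalgebra of dimension $r$, so $\mathrm{rank}_\mathfrak{g}(g-1)=\dim\mathfrak{g}-r\in\{72,126,240\}$. The inequality $\mathrm{rank}_\mathfrak{g}(g-1)\leq ck$ then forces $k\geq\lceil 72/22\rceil=4$, $k\geq\lceil 126/34\rceil=4$, $k\geq\lceil 240/58\rceil=5$ in the three cases, each of which is at least $r-3$. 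The remaining exceptional types $G_2,F_4,{}^2B_2,{}^2G_2,{}^2F_4,{}^3D_4$ all have $r\leq 4$, so $r-3\leq 1\leq c_A(T)$ is automatic.

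The main technical obstacle is small-$q$ and bad-characteristic book-keeping: in bad characteristic the dimension of the minimal nilpotent orbit may shift by a bounded amount, and for small $q$ the existence of a Singer-type element in $T$ requires a Zsigmondy-type argument and can fail for finitely many $(T,q)$. Since only finitely many pairs $(r,q)$ are affected for each Dynkin type, and since the constant slack $3$ in the bound $r-3$ comfortably absorbs any such bounded perturbation, these residual cases can be settled by direct inspection.
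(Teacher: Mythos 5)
Your proposal is correct and follows essentially the same route as the paper: the paper's Proposition \ref{lowernu} is exactly the rank/fixed-space subadditivity you describe (phrased via the quantity $\nu(x)=n-\max_\lambda \dim C_V(\lambda\widetilde{x})$ so as to handle the scalar ambiguity in $PGL(V)$ cleanly), applied with $V$ the natural module and $S$ the long-root elements for classical groups, and with $V$ the adjoint module and $S$ the long-root elements for $E_6^\epsilon,E_7,E_8$, yielding the same numerical bounds $22,34,58$ (from \cite{unijordan}) against defects $72,126,240$. The only real differences are cosmetic: for the high-defect element in the $E$-series the paper uses a regular unipotent element (fixed space of dimension $r$) where you use a regular semisimple one (same codimension), and the paper goes on to give sharp per-family lower bounds (Table \ref{table:1}) for $F_4$, $G_2$, ${}^2F_4$, ${}^3D_4$, $PSp_4(2^a)$ via character computations, whereas you correctly observe that for the stated weak bound $r-3\le c_A(T)$ these low-rank types are vacuous. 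One small caveat worth keeping in mind: your claim that long-root elements form a single $\mathrm{Aut}(T)$-orbit up to inversion fails for groups with extra graph automorphisms ($PSp_4(2^a)$, $F_4(2^a)$), but all such groups have Lie rank $\le 4$, so this does not affect the weak bound.
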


It was proved in \cite{smallcovering} that the only finite simple group with covering number 2 is the sporadic group $J_1.$ It turns out that $J_1$ is also the only finite simple group with (inverse) conjugacy width 2 (Proposition \ref{j1only}). However, there are infinitely many simple groups $T$ with $c_A(T)=2.$

\begin{theorem*}[\ref{aut2}]
Let $T$ be a finite simple group. Then $c_A(T)=2$ if and only if $T\cong J_1$ or $T\cong PSL_2(q)$ with $q \equiv 1 \mod 4$ or $q= 2^{2m}.$
\end{theorem*}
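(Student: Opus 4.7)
I would prove both directions by combining the lower bound of Theorem~\ref{lowerboundrank}, which gives $r-3\le c_A(T)$ for Lie type $T$ of rank $r$, with the upper bound $c_A(T)\le cn(T)$ from the chain of inequalities following Definition~\ref{defs}. For the forward direction, $c_A(J_1)=2$ is immediate from $cn(J_1)=2$ \cite{smallcovering}. For $T=PSL_2(q)$ with $q\equiv 1\pmod 4$ or $q=2^{2m}$, I would verify $c_A(T)=2$ using Frobenius's class-multiplication formula
\begin{equation*}
|\{(x,y)\in C_1\times C_2:xy=g\}|=\frac{|C_1||C_2|}{|T|}\sum_\chi\frac{\chi(x)\chi(y)\overline{\chi(g)}}{\chi(1)},
\end{equation*}
with $C_1,C_2$ ranging over the $T$-classes fused into $t^{\pm Aut(T)}$ and $g$ ranging over $T$. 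The explicit character table of $PSL_2(q)$ makes this a finite check, and the arithmetic conditions on $q$ are precisely those under which the diagonal or field automorphism fuses enough unipotent and semisimple classes to force the sum to be nonzero for every $g$.

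For the reverse direction, assume $c_A(T)=2$. Theorem~\ref{lowerboundrank} forces any Lie type $T$ to have Lie rank at most $5$, reducing to finitely many families. For alternating $T=A_n$ with $n\geq 7$, I would take $t$ a $3$-cycle: since $Aut(A_n)=S_n$ preserves the (inverse-closed) class of $3$-cycles, $t^{\pm Aut(T)}$ consists of $3$-cycles, and any product of two $3$-cycles moves at most $6$ points, so no element of support at least $7$ is expressible and $c_A(A_n)>2$. The cases $n\in\{5,6\}$ fold into the $PSL_2$ analysis via $A_5\cong PSL_2(5)$ and $A_6\cong PSL_2(9)$. For sporadic $T\not\cong J_1$, I would use the ATLAS/\textsc{GAP} character tables to exhibit for each group a nontrivial $t$ and a $g\in T$ with $g\notin (t^{\pm Aut(T)})^2$, i.e.\ a vanishing Frobenius sum even after $Out(T)$-fusion.

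The bulk of the work is the remaining low-rank Lie type case. For each classical or exceptional family of Lie rank at most $5$ other than the listed $PSL_2(q)$, I would select a regular semisimple element $t$ with small centralizer so that $t^{\pm Aut(T)}$ is a union of few $T$-classes; a character-theoretic or counting argument then produces a $g$ not in $(t^{\pm Aut(T)})^2$. The subtlest subcase, and the main obstacle, is showing $c_A(PSL_2(q))>2$ when $q\equiv 3\pmod 4$ or $q=2^{2m+1}$: here the Lie rank bound is useless and one must use the explicit structure of $Out(PSL_2(q))=\langle\delta,\phi\rangle$ to see that under these arithmetic conditions the diagonal and field automorphisms fail to fuse a critical pair of unipotent (or semisimple) $T$-classes, leaving an identifiable $g\in T$ outside $(t^{\pm Aut(T)})^2$ for a suitable $t$.
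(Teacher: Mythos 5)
Your plan would work in principle, but it bypasses the paper's central simplification and substantially overestimates the remaining Lie-type work. The paper's reverse direction hinges on an observation you omit: since $c_A(T)=2$ must hold for \emph{every} nontrivial $t$, applying it to an involution $t$ (so that $t^{\pm Aut(T)}=t^{Aut(T)}$ is a set of involutions) shows every element of $T$ is a product of at most two involutions, i.e., $T$ is strongly real. Theorem~\ref{stronglyreal} then yields a short explicit list, and intersecting with the refined bounds $C_T$ of Table~\ref{table:1} in Theorem~\ref{lowerboundrank} — which already force $c_A(T)\geq 3$ for \emph{every} Lie-type $T$ other than $PSL_2(q)$ — leaves only $PSL_2(q)$ with $q\not\equiv 3\pmod 4$, together with $A_{10}$, $A_{14}$, $J_1$, $J_2$. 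In particular, there is no ``bulk of work'' over low-rank classical and exceptional families with regular semisimple elements; Table~\ref{table:1}, which you quote only through the crude $r-3$ bound, disposes of them at one stroke. Likewise the subcase $q\equiv 3\pmod 4$, which you flag as the subtlest, falls out of the strongly-real classification with no computation at all.

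Your alternating-group argument (3-cycles for $n\geq 7$, folding $A_5$, $A_6$ into $PSL_2$) is correct and slightly more elementary than the paper's, which reaches $A_{10}$ and $A_{14}$ through the strongly-real list. For the forward direction, your Frobenius-formula ``finite check'' is defensible but vague where the paper is structural: the paper quotes from \cite{smallcovering} that the only $PSL_2(q)$-classes of conjugacy width $3$ are the two root-element classes when $q\equiv 1\pmod 4$ (swapped by a diagonal automorphism), respectively the classes $R_j$ of order-$(q+1)$ elements when $q$ is even, and then observes that for $q=2^{2m+1}$ the class $R_{(q+1)/3}$ is $Out(T)$-invariant — that invariant class, not a loosely described ``failure of fusion,'' is the precise obstruction giving $c_A(PSL_2(2^{2m+1}))=3$. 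You should state this concretely rather than gesture at the generators of $Out(PSL_2(q))$.
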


We also classify simple groups with any of the numbers $cn(T),$ $c(T),$ $c_i(T)$ or $c_A(T)$ equal to $3.$

\begin{theorem*}[\ref{mainthm}]
Let $T$ be a finite simple group. 
\begin{enumerate}
    \item 
$c(T)=3$ if and only if $T$ is isomorphic to one of the following: \begin{itemize}
    \item $PSL_2(q)$ \, with\, $q>2$
    \item $PSL_3(q)$
    \item$ PSU_3(q)$ \, with\, $3 \vert q+1,\,q>2$
    \item $^2B_2(q)$ \, with\, $q>2$
    \item $^2G_2(q)$\, with\, $q>3$
    \item $G_2(3^n)$ \, with\, $n\geq 2$
    \item $A_5$, $A_6$, $A_7$
    \item $M_{11}$, $M_{22}$, $M_{23}$, $M_{24}$, $J_3$, $J_4$, $Mcl$, $Ru$, $Ly$, $O'N$, $Fi_{24'}$, $Th$, $M.$
\end{itemize}
\item  $cn(T)=3$ if and only if $c(T)=3.$ 
\item  $c_i(T)=3$ if and only if $c(T)=3.$ 
\item If $c_A(T)=3$ then one of the following holds: 
\begin{enumerate}
    \item $c(T)=3$ and $T\neq PSL_2(q)$ with $q\equiv 1\,(mod\,4)$ or $q= 2^{2m}.$
    \item $T\cong F_4(2^n).$
\end{enumerate}
\end{enumerate}
\end{theorem*}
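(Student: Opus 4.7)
The plan is to leverage the chain $c_A(T)\le c_i(T)\le c(T)\le cn(T)$ together with Proposition~\ref{j1only} (which says that only $J_1$ has $c_i(T)=2$) and the theorem of \cite{smallcovering} that only $J_1$ has $cn(T)=2$. Parts (2) and (3) then reduce almost immediately to part (1): if $cn(T)=3$ or $c_i(T)=3$ then $c(T)\in\{2,3\}$, and the case $c(T)=2$ would force $T\cong J_1$, which in fact has $cn(T)=c_i(T)=2$ and is therefore excluded. Conversely, for each $T$ in the list of part (1) one verifies $cn(T)=c_i(T)=3$ using published tables of covering numbers in the Arad--Herzog monograph together with \cite{LAWTHER1998118} for the inverse variant, and directly from ATLAS character tables in the sporadic and alternating cases.

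For part (1) I would first use Theorem~\ref{lowerboundrank} to restrict attention to $T$ of Lie rank $r\le 6$, since $c(T)\ge c_A(T)\ge r-3$. The analysis then proceeds family by family. The entries appearing in the list are handled positively by explicit product-of-classes computations: for $PSL_2(q)$ with $q>3$ via its three kinds of conjugacy classes (unipotent, split and non-split semisimple); for $PSL_3(q)$, $PSU_3(q)$ with $3\mid q+1$, $^2B_2(q)$, $^2G_2(q)$ and $G_2(3^n)$ by standard small-rank arguments, exploiting the exceptional isogeny in the last case. The remaining classical and exceptional families of rank $2\le r\le 6$ (e.g.\ $PSL_4(q)$, $PSp_{2n}(q)$, $P\Omega_n^\pm(q)$ of bounded rank, $G_2(q)$ with $\gcd(q,3)=1$, $^3D_4(q)$, $F_4(q)$, $E_6(q)$, $^2E_6(q)$) must be excluded by exhibiting a concrete element -- typically a regular unipotent element or a regular semisimple element with disconnected centralizer -- that cannot be written as a product of three conjugates; the obstruction is measured by a centralizer order or dimension estimate. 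Alternating groups are handled via the classical bound $cn(A_n)\ge\lceil n/2\rceil$, which leaves only $n\in\{5,6,7\}$, each verified directly. The sporadic groups are handled by inspection of the ATLAS.

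For part (4), $c_A(T)\le c(T)$ gives $c(T)\ge 3$, and $c_A(T)=3$ excludes $c_A(T)=2$, so by Theorem~\ref{aut2} the cases $T\cong J_1$ and $T\cong PSL_2(q)$ with $q\equiv 1\pmod 4$ or $q=2^{2m}$ are eliminated. If $c(T)=3$, what remains is the list of part (1) with these $PSL_2(q)$ removed, yielding option (a). If $c(T)\ge 4$ but $c_A(T)=3$, then again $r\le 6$ by Theorem~\ref{lowerboundrank}, and I would go through the small-rank Lie-type families in turn, showing that the full automorphism group can strictly reduce the conjugacy width only in the presence of a large graph-type automorphism fusing many conjugacy classes. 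The unique such example is $F_4(2^n)$, where the exceptional graph automorphism in characteristic $2$ interchanges long and short root subgroups and fuses corresponding pairs of unipotent classes. Using the description of the unipotent classes of $F_4$ in characteristic $2$ (Shinoda, Liebeck--Seitz) and their fusion under the graph automorphism, I would establish both $c_A(F_4(2^n))\le 3$ and, by means of a regular unipotent witness, $c(F_4(2^n))\ge 4$.

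The chief obstacles are twofold. First, the sheer amount of small-rank Lie-type casework required in part (1), where every boundary family has to be ruled out by a specific element accompanied by a specific centralizer estimate. Second, and harder, the sharp analysis of $F_4(2^n)$ in part (4): Theorem~\ref{lowerboundrank} here gives only $c_A(T)\ge 1$, so both the lower bound $c(F_4(2^n))\ge 4$ and the upper bound $c_A(F_4(2^n))\le 3$ must be obtained by class-specific arguments that explicitly track how the exceptional graph automorphism fuses unipotent classes.
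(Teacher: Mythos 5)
Your overall architecture (restrict to small Lie rank via Theorem~\ref{lowerboundrank}, then case-by-case) matches the paper's, but there are two genuine gaps and one logical slip.

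\textbf{Alternating groups.} You propose to dispose of $A_n$ for $n\geq 8$ using the bound $cn(A_n)\geq \lceil n/2\rceil$. But $cn$ sits at the \emph{top} of the chain $c_A\leq c_i\leq c\leq cn$: $cn(A_n)\geq 4$ does \emph{not} imply $c(A_n)\geq 4$. Some specific conjugacy class could still cover in three steps even though not every class does. The paper handles this with Theorem~\ref{bertramherzog} (Bertram--Herzog): for $n\geq 8$, not every permutation in $A_n$ is a product of three $3$-cycles, so the $3$-cycle class already gives $c_A(A_n)\geq 4$ (and hence $c(A_n),c_i(A_n),cn(A_n)\geq 4$). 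You need this kind of explicit-class witness; the covering-number bound alone is circular here, since part~(2) (equivalence of $cn(T)=3$ and $c(T)=3$) is precisely what you are trying to prove.

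\textbf{$F_4(2^n)$.} For part~(4) you propose to establish $c_A(F_4(2^n))\leq 3$ by tracking fusion of unipotent classes under the exceptional graph automorphism. The paper explicitly records that it has \emph{not} been able to determine whether $c_A(F_4(2^n))=3$, and part~(4) is stated only as a one-directional implication precisely to accommodate this. You do not need to show $c_A(F_4(2^n))\leq 3$ to prove the theorem as stated; you only need that $F_4(2^n)$ survives the Table~\ref{table:1} lower-bound sieve ($C_T=3$, coming from non-strong-reality) while $c_i(F_4(2^n))\geq 4$ excludes it from conclusion~(a). Your sketch therefore attempts to settle an open question that the theorem does not require; as written this is an unjustified overclaim, not a proof step.

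\textbf{Logic of parts (2) and (3).} You assert that ``if $cn(T)=3$ or $c_i(T)=3$ then $c(T)\in\{2,3\}$.'' The chain gives $c(T)\leq cn(T)$ but $c(T)\geq c_i(T)$, so $c_i(T)=3$ forces $c(T)\geq 3$, not $c(T)\leq 3$. The direction $c_i(T)=3\Rightarrow c(T)=3$ is not immediate from the chain: one must show $c(T)\leq 3$, which in the paper is a by-product of computing $cn(T)=3$ for every $T$ in the list. (The direction $c(T)=3\Rightarrow c_i(T)=3$ is the one that genuinely follows from the chain plus Proposition~\ref{j1only}.) This is a small fix, but as stated your reduction is not valid.

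Subsidiary differences: using only $r\leq 6$ rather than Table~\ref{table:1} forces you to examine $E_6^\epsilon(q)$, $F_4(q)$, $^3D_4(q)$, $G_2(q)$ ($3\nmid q$), $PSp_{2n}$ and $P\Omega_n^\pm$ families that the paper's table dispatches immediately; that is workable but considerably heavier. The paper's $P\Omega_7(q)$ argument (spin module, triality, Singer element in $SL_4(q)$, Proposition~\ref{lowernu}) and the $G_2(3^a)$ argument (Chevie plus the character-value bounds in Table~\ref{tab2} fed into Corollary~\ref{g2help}) are both quite specific; your ``standard small-rank arguments'' and ``exploiting the exceptional isogeny'' would need to be substantiated.
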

\textbf{Remark} For part 3(b) we have not been able to determine whether $c_A(F_4(2^n))=3.$

The result in \cite[Lemma 5.1]{orbdiam} on simple diagonal actions states that for a class of primitive groups $T^k.X$ with bounded orbital diameter the Lie rank of $T$ and $k$ are bounded. Conversely, if $T$ has bounded Lie rank, $k$ is bounded and a few more criteria are met, then one obtains a bounded class.The proof of this result is model theoretic and includes no explicit bounds on the orbital diameter. The following result provides an explicit upper bound, giving rise to many bounded families of primitive groups of simple diagonal type. For simplicity we restrict to the class of simple diagonal groups of the form $T^k.S_k\leq D(k,T).$

\begin{theorem*}[\ref{uppersk}]
Let $k\geq 3$ and let $T$ be a simple group. Then $$orbdiam(T^k.S_k)\leq 24(k-1)c_i(T)^2.$$
\end{theorem*}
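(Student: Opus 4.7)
The plan is to realise each orbital graph $\Gamma$ of $G=T^k.S_k$ as a Cayley graph on the abstract group $T^{k-1}$ (with coordinate-wise multiplication), and then to reduce the diameter bound to the one established in Theorem~\ref{lemma0} for the orbital $\Gamma_0^s$. Concretely, every right coset in $\Omega=D_A\backslash G$ has a unique representative in $T^k$ (viewed inside $G$) of the form $(1,h_2,\ldots,h_k)$, giving a bijection $\Omega\leftrightarrow T^{k-1}$ under which the base point $\omega_0$ corresponds to $(1,\ldots,1)$. The subgroup $H=\{(1,c_2,\ldots,c_k):c_j\in T\}\leq T^k$ then acts on $\Omega$ by right translation in $T^{k-1}$; since $H\leq G$ preserves orbitals, each orbital graph is a Cayley graph $\mathrm{Cay}(T^{k-1},N_0)$, where $N_0$ is the symmetric closure of the $D_A$-orbit of the base suborbit representative $v=(t_2,\ldots,t_k)$. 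Writing $t_1:=1$, a short computation gives
$$
N_0=\left\{\big((t_{\pi(1)}^{-1}t_{\pi(i)})^{a}\big)_{i=2}^{k}:\pi\in S_k,\ a\in T\right\}^{\pm 1}.
$$

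Fix an orbital with representative $v\ne(1,\ldots,1)$; after applying an $S_k$-shuffle I may assume $t_2\ne 1$. The central claim of the proof, call it $(\ast)$, is that there is a nontrivial $s\in T$ such that for every position $j\in\{2,\ldots,k\}$ and every $a\in T$, the single-coordinate element with $s^a$ in position $j$ and $1$ elsewhere lies in $N_0^{M}$ for some fixed $M\leq 24c_i(T)$. Once $(\ast)$ is available, Theorem~\ref{lemma0} applied to $\Gamma_0^s$ implies that every element of $T^{k-1}$ is a product of at most $(k-1)c_i(T,s)\leq (k-1)c_i(T)$ single-coordinate elements; each of those requires at most $24c_i(T)$ letters from $N_0$ by $(\ast)$. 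Combining gives $\mathrm{diam}(\Gamma)\leq 24(k-1)c_i(T)^2$, which proves the theorem.

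The main work is the proof of $(\ast)$. I would first build partial single-coordinate elements in $N_0^2$: taking the product of the $a$-conjugated element with shuffle $\pi=\mathrm{id}$ and the $a$-conjugated element with shuffle $\pi=(1,2)\in S_k$ yields $(1,s_3^a,s_4^a,\ldots,s_k^a)$ with $s_j:=t_jt_2^{-1}t_j$, and analogous products with pivot $p\in\{3,\ldots,k\}$ give $N_0^2$-elements with a trivial coordinate in any chosen position. The next step is to combine $O(c_i(T))$ such two-step blocks, choosing the pivots and the conjugating parameters $a$ carefully so as to annihilate all but one coordinate of the total product; the inverse conjugacy width $c_i(T)$ governs the length needed to solve the equations over $T$ that the cancellation requires. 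The hard part is precisely this combinatorial argument: in the generic case, where the various $s_j$ are nontrivial and their conjugacy classes generate $T$, a greedy cancellation works, but in degenerate situations (for instance when $s_j=1$ for some $j$, or when many $t_i$ coincide or are mutually conjugate) one must fall back on alternative constructions, such as commutator identities producing $N_0^4$-elements with two guaranteed trivial coordinates, or shuffles with pivots placed at different positions. Establishing $(\ast)$ uniformly across all cases is the principal obstacle; the absolute constant $24$ absorbs the case splits and the overhead of symmetrising over $S_k$.
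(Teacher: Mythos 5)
Your high-level framework is essentially the paper's: view $\Omega$ as $T^{k-1}$ via coset representatives of the form $(1,h_2,\dots,h_k)$, realise each orbital graph as a Cayley graph whose connection set is the symmetrised $D_A$-orbit of the base suborbit, and reduce the bound to Lemma~\ref{lemma0} by showing some single-coordinate element lies within distance $24c_i(T)$ of the identity. Unfortunately the entire substance of the theorem lies in your claim $(\ast)$, and you have not proved it; what you sketch for it does not plausibly work.

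Two concrete problems. First, your $N_0^2$ computation with the shuffle $(1,2)$ only kills the second coordinate: the product is $(1,(t_3t_2^{-1}t_3)^a,\dots,(t_kt_2^{-1}t_k)^a)$, which in general still has $k-2$ nontrivial entries (and can even have \emph{more} nonzero entries than $v$ itself, since $t_j=1$ gives $s_j=t_2^{-1}\neq 1$). The paper's analogue of this step instead shuffles two \emph{non-pivot} coordinates $l,j\geq 2$ with $t_l\neq t_j$ (or uses the transposition $(1\,k)$ when all nontrivial $t_j$ coincide) and right-multiplies by the representative $(1^i,t_{i+1},\dots,t_k)$ itself: this cancels \emph{every} coordinate except $l$ and $j$, producing an element of the form $(1^{k-2},t^{-1},t)$ in only two steps, with the number of zero coordinates independent of the original pattern. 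Your construction does not reach this. Second, even granting a good two-coordinate element, your plan to ``annihilate all but one coordinate'' by greedily combining $O(c_i(T))$ two-step blocks is not an argument: cancelling coordinate by coordinate would cost $\Theta(k)$ blocks, not $O(c_i(T))$, and it is not explained how the choice of conjugating parameter $a$ can remove many coordinates at once. The paper handles the reduction from two nontrivial coordinates to one by an entirely different mechanism, which you do not anticipate: write an involution $u$ as a short word in $t^{\pm T}$ to reach $(1^{k-2},u,u')$, split into the case where some $b_1,b_2$ kill $u$ but not $u'$, and otherwise show $u,u'$ are commuting involutions, invoke the auxiliary Lemma~\ref{twoinvolutions} to produce $h=ww^x$ of order $>2$, and exploit $h\neq h^{-1}$ across three more translations to isolate $h^{-2}$ in a single coordinate. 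None of this machinery appears in your proposal, and without it the constant $24$ is unsupported --- you acknowledge as much by saying the hard part ``is the principal obstacle''. As written this is an outline of the reduction, not a proof.
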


\section{Preliminary Results}
In this section we include some background material that we use in the proof of our theorems. \par 

We begin with a well-known a character theoretic result which gives us a method to find conjugacy widths.

\begin{lemma}\cite[Lemma 10.1]{coveringarticle1}\label{isit0}
Let $C_1,\dots,C_d$ be conjugacy classes of a finite group $G$ with representatives $c_1,\dots,c_d.$ For $z \in G$, the number of solutions $(x_1, \dots , x_d) \in C_1 \times\dots \times C_d$ to the equation
$x_1\dots x_d = z$ is $$\frac{\prod |C_i|}{|G|} \sum_{\chi \in Irr(G)} \frac{\chi(c_1)\dots\chi(c_d)\chi(z^{-1})}{\chi(1)^{d-1}}.$$
\end{lemma}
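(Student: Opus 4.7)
The plan is to prove this classical Frobenius-type formula by working inside the group algebra $\mathbb{C}[G]$ and exploiting the fact that conjugacy class sums are central. The result is classical, so the sketch below follows standard lines; the point is to organise it cleanly enough that every factor in the statement (including the $\chi(1)^{d-1}$ in the denominator) drops out naturally.

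First I would reformulate the counting problem algebraically. For each conjugacy class $C_i$ define the class sum $\widehat{C_i}=\sum_{x\in C_i}x\in\mathbb{C}[G]$. Expanding the product $\widehat{C_1}\cdots\widehat{C_d}$ and comparing coefficients, the number of tuples $(x_1,\ldots,x_d)\in C_1\times\cdots\times C_d$ with $x_1\cdots x_d=z$ is precisely the coefficient of $z$ in $\widehat{C_1}\cdots\widehat{C_d}$. So the task reduces to extracting this single coefficient.

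Next I would use the central characters. Each $\widehat{C_i}$ lies in $Z(\mathbb{C}[G])$, so by Schur's lemma it acts on an irreducible representation $V_\chi$ as a scalar $\omega_\chi(\widehat{C_i})$; taking traces on $V_\chi$ and rearranging gives
\[
\omega_\chi(\widehat{C_i})=\frac{|C_i|\chi(c_i)}{\chi(1)}.
\]
Since $\omega_\chi$ is an algebra homomorphism, the product $\widehat{C_1}\cdots\widehat{C_d}$ acts on $V_\chi$ as the scalar $\prod_i|C_i|\chi(c_i)/\chi(1)^d$, with trace $\prod_i|C_i|\chi(c_i)/\chi(1)^{d-1}$.

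Finally I would extract the coefficient of $z$ using the regular character. For any $\alpha=\sum_g a_g g\in\mathbb{C}[G]$ one has $a_z=\frac{1}{|G|}\chi_{\mathrm{reg}}(z^{-1}\alpha)$, where $\chi_{\mathrm{reg}}=\sum_{\chi\in\mathrm{Irr}(G)}\chi(1)\chi$. Applying this to $\alpha=\widehat{C_1}\cdots\widehat{C_d}$ and using that on $V_\chi$ the operator $z^{-1}\alpha$ has trace equal to the scalar by which $\alpha$ acts times $\chi(z^{-1})$, one obtains
\[
a_z=\frac{1}{|G|}\sum_{\chi\in\mathrm{Irr}(G)}\chi(1)\cdot\frac{\prod_i|C_i|\chi(c_i)}{\chi(1)^d}\cdot\chi(z^{-1})=\frac{\prod_i|C_i|}{|G|}\sum_{\chi\in\mathrm{Irr}(G)}\frac{\chi(c_1)\cdots\chi(c_d)\chi(z^{-1})}{\chi(1)^{d-1}},
\]
which is the claimed formula.

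There is no real obstacle: the argument is entirely mechanical once the right tools (central class sums, central characters, coefficient extraction via the regular character) are set up. The only bookkeeping subtlety is that the central character contributes one $\chi(1)^{-1}$ per class sum but the regular character contributes a single factor of $\chi(1)$, yielding $\chi(1)^{d-1}$ in the denominator; being explicit about this avoids an off-by-one error.
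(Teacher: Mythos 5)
Your proof is correct and is the standard argument for this Frobenius-type formula: identify the count with a coefficient of a product of class sums, use that central elements act by the central character $\omega_\chi(\widehat{C_i})=|C_i|\chi(c_i)/\chi(1)$, and extract the coefficient via the regular character $\chi_{\mathrm{reg}}=\sum_\chi\chi(1)\chi$. The paper does not give a proof of this lemma at all — it simply cites it from Arad, Stavi, and Herzog — so there is nothing to compare against; your write-up would serve as a self-contained proof should one be wanted.
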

Note the immediate corollary of this result. 

\begin{corollary}\label{g2help}
Let $C$ and $D$ be conjugacy classes of $G$ with representatives $c,d.$ If $$\left| \sum_{\chi \in Irr(G)\setminus1_G} \frac{\chi(c)^k\chi(d^{-1})}{\chi(1)^{k-1}}\right| < 1,$$ then $D \subseteq C^k.$
\end{corollary}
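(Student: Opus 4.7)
The plan is to apply Lemma \ref{isit0} directly with all factor classes equal to $C$ (so the running index $d$ in the lemma is replaced by $k$) and with target element $z = d$, the chosen representative of $D$.  The lemma then says that the number of tuples $(x_1,\ldots,x_k) \in C^k$ satisfying $x_1 \cdots x_k = d$ equals
$$N \;=\; \frac{|C|^k}{|G|} \sum_{\chi \in \mathrm{Irr}(G)} \frac{\chi(c)^k\,\chi(d^{-1})}{\chi(1)^{k-1}}.$$
I would then isolate the contribution of the trivial character $\chi = 1_G$, which is $\chi(c)^k\chi(d^{-1})/\chi(1)^{k-1} = 1$, and rewrite
$$N \;=\; \frac{|C|^k}{|G|}\Bigl(1 + \Sigma\Bigr), \qquad \Sigma \;=\; \sum_{\chi \in \mathrm{Irr}(G)\setminus 1_G} \frac{\chi(c)^k\chi(d^{-1})}{\chi(1)^{k-1}}.$$

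The hypothesis of the corollary is exactly $|\Sigma| < 1$, and hence $1+\Sigma > 1 - |\Sigma| > 0$.  Since $|C|^k/|G| > 0$, this forces $N > 0$, so at least one factorization $d = x_1\cdots x_k$ with each $x_i \in C$ exists.  In particular $d \in C^k$.

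To pass from the single element $d$ to the whole conjugacy class $D$, I would use the conjugacy invariance of $C$: for any $z \in D$ there is $g \in G$ with $z = d^g$, and conjugating the factorization yields $z = x_1^g \cdots x_k^g$ with each $x_i^g \in C$.  Hence $D \subseteq C^k$.

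There is no real obstacle here: the corollary is a direct reformulation of Lemma \ref{isit0} in which the trivial character is separated out, combined with the observation that conjugating a factorization only moves its factors within $C$.  The only point requiring (minimal) care is noting that $\chi(z^{-1}) = \chi(d^{-1})$ for all $z \in D$, so the hypothesis stated at the representative $d$ really captures every element of $D$.
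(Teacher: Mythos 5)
Your proposal is correct and is exactly the argument the paper has in mind: the paper gives no proof, labelling the corollary as an ``immediate'' consequence of Lemma \ref{isit0}, and your derivation (specialise the lemma to $C_1=\dots=C_k=C$, $z=d$, split off the trivial character, use the hypothesis to force the count $N$ to be positive, then conjugate to get the whole class $D$) is the standard and intended route. The one small point worth tightening is the inequality $1+\Sigma>1-|\Sigma|$: this should be $1+\Sigma\ge 1-|\Sigma|$, and it relies on $\Sigma$ being real, which follows because $N$ is a non-negative integer and $|C|^k/|G|$ is a positive rational, so $1+\Sigma=N|G|/|C|^k$ is real; with that observation the conclusion $N>0$ is secure.
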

 
We include another result that we use in our classification of groups with small conjugacy widths, namely the classification of strongly real groups. A group is \textit{strongly real} if and only if any of its elements can be expressed as a product of at most two involutions. 
\begin{theorem}[\cite{noltellers, galt, gowr,gow88, koles, MALCOLM2018297, ramo, realsimplegroups,3d4}]\label{stronglyreal}
Let $G$ be a non-abelian finite simple group. Then $G$ is strongly real if and only if it is isomorphic to one of
\begin{itemize}
    \item $P Sp_{2n}(q)$ where $q \not \equiv 3\, (mod \,4)$ and $n \geq 1$
    \item $P\Omega_{2n+1}(q)$ where $q \equiv 1\, (mod \,4)$ and $n \geq 3$
    \item $P\Omega_9(q)$ where $q \equiv 3 \,(mod\, 4)$
    \item $P\Omega^+_{4n}(q)$ where $q \not\equiv 3 \,(mod \,4)$ and $n \geq 3$
    \item $P\Omega^-_{4n}(q)$ where $n \geq 2$
    \item $P\Omega^+_8(q)$ or $^3D_4(q)$
    \item $A_5$, $A_6$, $A_{10}$, $A_{14}$, $J_1$, $J_2.$
\end{itemize}
\end{theorem}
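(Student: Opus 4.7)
The strategy is to invoke the Classification of Finite Simple Groups and analyse each family separately, in each case characterising strong reality by a combination of representation-theoretic criteria and explicit description of conjugacy classes. The basic observation is that an element $g \in G$ is strongly real if and only if it is inverted by some involution, equivalently $g = st$ with $s^2 = t^2 = 1$; and $G$ is strongly real if and only if every element has this property. A useful tool throughout is that if $\chi(g) \in \mathbb{R}$ for every irreducible character $\chi$ of $G$ then $g$ is real, so a necessary condition for strong reality is that all character values of $G$ are real (the ``totally orthogonal'' condition).

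First I would dispose of the alternating groups. Reality in $A_n$ is delicate because a conjugacy class of $S_n$ may split; a cycle-type analysis shows that $A_n$ fails to be even real for most $n$. A short direct check then confirms that only $A_5, A_6, A_{10}, A_{14}$ among alternating groups are strongly real. Next I would handle the sporadic groups. Here I would read off strong reality directly from ATLAS character-table data: the non-real classes and the classes whose real-class elements are not products of two involutions can be inspected in each of the 26 sporadic tables, and only $J_1$ and $J_2$ survive. Both of these steps use only finite, explicit verifications and can be attributed to the references \cite{noltellers, koles} and related work.

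The bulk of the proof is the Lie-type case, which I would organise by type. For classical groups I would rely on the structural results of Gow and Wonenburger: for the symplectic groups $\mathrm{PSp}_{2n}(q)$, reality and strong reality coincide when $q \not\equiv 3 \pmod 4$, since one can diagonalise a symplectic element in a way that exhibits an inverting involution in the group; this appears in \cite{gow88, gowr}. For orthogonal groups of type $P\Omega^{\pm}_m(q)$ the analysis of \cite{MALCOLM2018297, ramo} produces a parity obstruction coming from the spinor norm and determinant, giving precisely the congruence conditions on $q$ and the dimensional conditions stated. For unitary and linear groups, I would show that in almost all cases some semisimple class is not real (since inversion is typically realised by a graph automorphism rather than by an inner automorphism), placing $\mathrm{PSL}_n$ and $\mathrm{PSU}_n$ outside the list for $n \geq 2$ with small exceptions absorbed by the $\mathrm{PSp}$ case. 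For the exceptional groups of Lie type, the only positive cases are $P\Omega_8^+(q)$ (triality) and its twist $^3D_4(q)$, for which strong reality is established in \cite{3d4, galt}; all other exceptional types contain non-real unipotent or semisimple classes which I would exhibit explicitly using Lawther's tables or computations inside a torus normaliser.

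The main obstacle is the classical-group analysis, where one must combine an in-group statement (elements inverted by involutions of the right type) with a ``descent'' to the simple quotient, tracking the spinor norm, determinant, and projective centre carefully; this is why the statement depends so sensitively on $q \pmod 4$ and on the parity of the dimension. Once this uniform framework is in place, the list in the theorem is simply the union of the positive outputs from each family, and the proof reduces to checking that no further simple group satisfies both ``all characters real'' and ``every real class is strongly real,'' which is a finite check for bounded-rank families and follows from the general structural theorems for unbounded rank.
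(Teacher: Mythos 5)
This statement is quoted in the paper as a known classification, with no proof given beyond the citations (Wonenburger--Gow, Ellers--Nolte, R\"am\"o, Kolesnikov--Nuzhin, Galt, Vdovin--Galt, Tiep--Zalesski, Malcolm), and your outline follows essentially the same route --- CFSG plus a family-by-family reduction to exactly those sources --- so there is nothing in the paper itself to compare it against in more detail. As a blind reconstruction of a cited theorem your sketch is consistent with how the literature establishes the result, modulo minor slips that do not affect the logic (``all character values real'' is the condition for reality of all classes, not total orthogonality, and $P\Omega_8^+(q)$ is a classical group whose appearance alongside $^3D_4(q)$ is due to triality).
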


The following result is on alternating groups from \cite[Thm 2]{BERTRAM200187}.

\begin{theorem}\cite{BERTRAM200187}\label{bertramherzog}
Let $n\geq 5$, $l$ odd and $l\leq n$. Then every permutation in the alternating group $A_n$ is a product of three $l-cycles$ if and only if either $\frac{n}{2}\leq l$ or $n=7$ and $l=3$. 
\end{theorem}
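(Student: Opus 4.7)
The plan is to split the proof into the necessity ("only if") direction and the sufficiency ("if") direction, which require quite different techniques.

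For necessity, suppose $l < n/2$ and $(n,l) \neq (7,3)$; I would exhibit an element of $A_n$ that is not a product of three $l$-cycles. The cleanest invariant is the support: an $l$-cycle moves exactly $l$ points, so any product of three $l$-cycles has support of size at most $3l$. When $3l < n$ this immediately rules out long cycles: for $n$ odd take the $n$-cycle $(1\,2\,\cdots\,n)\in A_n$; for $n$ even take an $(n-1)$-cycle (which lies in $A_n$ since $n-1$ is odd), adjusted by an obvious transposition if needed to maintain parity. The delicate range is $n/3 \le l < n/2$, where the support bound is not by itself sufficient. Here I would pick a target $g$ whose cycle structure forces a large "overlap" among any three supposed $l$-cycle factors, for instance a product of two cycles whose lengths are incompatible with a partition of $3l$ into three pieces of size $l$. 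Small values of $(n,l)$ in this range are checked by direct computation, isolating $(7,3)$ as the unique sporadic exception.

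For sufficiency, with $n/2 \le l \le n$, the natural tool is the character formula in Lemma~\ref{isit0}. Let $C$ denote the $A_n$-class of $l$-cycles (or the union of the two classes when $l=n$ and $n$ is odd). For arbitrary $g \in A_n$, the number of triples $(x_1,x_2,x_3) \in C^3$ with $x_1x_2x_3 = g$ is
$$N \;=\; \frac{|C|^3}{|A_n|}\sum_{\chi \in \mathrm{Irr}(A_n)} \frac{\chi(c)^3\,\chi(g^{-1})}{\chi(1)^2},$$
and it suffices, via Corollary~\ref{g2help}, to show that $\left|\sum_{\chi\neq 1}\chi(c)^3\chi(g^{-1})/\chi(1)^2\right|<1$. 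Using $|\chi(g^{-1})| \le \chi(1)$ reduces this to bounding $\sum_{\chi\neq 1}|\chi(c)|^3/\chi(1)$. The Murnaghan--Nakayama rule implies $\chi(c)=0$ unless the Young diagram of $\chi$ contains a border strip of length $l$, and when $l \ge n/2$ this drastically restricts the set of contributing partitions (essentially to hooks and near-hooks of residue $n-l < n/2$). Combining Fomin--Lulov-type bounds on $|\chi(c)|/\chi(1)^{l/n}$ with Rasala's lower bounds on $\chi(1)$ for non-trivial irreducibles of $S_n$ (and the Clifford-theoretic comparison between $S_n$- and $A_n$-characters) should make the remaining sum $o(1)$ for $n$ large; a finite list of small $n$ is then dispatched by consulting the character table. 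The sporadic case $(n,l)=(7,3)$ is verified directly from the character table of $A_7$.

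The main obstacle is the boundary regime $l \approx n/2$: here the Murnaghan--Nakayama restriction is weakest, many irreducibles contribute, and the degree bounds must be used carefully to beat the multiplicative factor from $|\chi(c)|^3$. If the character estimate turns out to be too loose near this boundary, a fallback is to construct the factorization directly by induction on $n$: given $g \in A_n$, write $g = h_1 h_2$ where $h_1,h_2$ each have support of size $\le l$, then absorb the necessary correction into a third $l$-cycle using the hypothesis $2l \ge n$ to guarantee that pairs of $l$-cycles cover $\{1,\dots,n\}$. This combinatorial route avoids delicate character bounds but requires a careful case analysis on the cycle type of $g$.
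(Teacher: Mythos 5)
First, note that the paper does not prove this statement at all: Theorem \ref{bertramherzog} is quoted from \cite{BERTRAM200187}, so your proposal has to be measured against what a genuine proof requires, and in both directions it falls short. For the necessity direction, the invariants you lean on (support of a product of three $l$-cycles has size at most $3l$, plus parity) only handle $3l<n$, and in the critical range $n/3\le l<n/2$ your proposed witnesses are the wrong ones. For instance, with $(n,l)=(11,5)$ the $11$-cycle \emph{is} a product of three $5$-cycles: $(1\,2\,\dots\,11)=\tau\cdot(7\,8\,9\,10\,11)$ with $\tau$ a $7$-cycle, and a $7$-cycle is a product of two $5$-cycles. A correct witness for $(11,5)$ is instead an element of cycle type $(3,2,2,2,2)$: ruling it out requires analysing the connected components of the supports of the three hypothetical $5$-cycles (a lone component forces $\sigma$ to contain an $l$-cycle) together with a cycle-counting (genus/Riemann--Hurwitz type) inequality for the transitive case. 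Nothing of this sort appears in your sketch, and since the range $n/3\le l<n/2$ contains infinitely many pairs $(n,l)$, "small values checked by direct computation" cannot close it.

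For sufficiency, the character-theoretic route via Corollary \ref{g2help} is not merely delicate near $l\approx n/2$; the inequality you aim for is false there. Take $\chi=\chi_{(n-1,1)}$ (irreducible on $A_n$), $c$ an $l$-cycle with $l=\lceil n/2\rceil$, and $g$ fixed-point-free, so $\chi(c)=n-l-1\approx n/2$ and $\chi(g^{-1})=-1$: this single term contributes roughly $n/8$ in absolute value to $\sum_{\chi\neq 1}\chi(c)^3\chi(g^{-1})/\chi(1)^2$, and your further reduction to $\sum_{\chi\neq1}|\chi(c)|^3/\chi(1)$ yields a quantity of order $n^2$. Since Corollary \ref{g2help} is only a sufficient criterion, its failure leaves the entire heart of the theorem ($l$ of order $n/2$, which is exactly the new content compared with classical results for $l$ close to $n$) unproved. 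Your fallback ("write $g=h_1h_2$ with small supports and absorb the correction into a third $l$-cycle") is also not a proof: the factors $h_1,h_2$ are not $l$-cycles, and upgrading them is precisely the content of Bertram-type theorems on which even permutations are products of two $l$-cycles, which must then be combined with a careful case analysis on the cycle type of $g$ (and with explicit treatment of the exceptional case $(n,l)=(7,3)$ and of the splitting of $n$-cycles into two $A_n$-classes when $l=n$). In short, both directions have genuine gaps where the actual work lies.
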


We conclude by listing some existing results on the covering numbers of some finite simple groups.

\begin{theorem}[\cite{smallcovering,dvircovering,lev,LAWTHER1998118}] \label{preliminary}\begin{enumerate}
    \item \label{lev}
If $n\geq 3,$ then $cn(PSL_n(q))=n$ for $q\geq 4.$ Also for $q> 3,$ $cn(PSL_2(q))=3.$
    \item \label{alternating}
 $cn(A_n)=\lfloor\frac{n}{2}\rfloor$ for $n\geq 6$, and $cn(A_5)=3$.
    \item \label{suzuki}
   $cn(^2B_2(q))=3$ for $q>2.$
    \item \label{ree}
    $cn(^2G_2(q))=3$ for  $q>3.$
\end{enumerate}
\end{theorem}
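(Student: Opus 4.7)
This theorem is a compilation of four results from the cited literature, so the paper's ``proof'' is essentially the list of citations. I describe the approach I would take to prove each part from scratch, which is also the strategy in those references.

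The unifying tool is the character-theoretic formula of Lemma \ref{isit0}. For the linear groups in part 1, the plan is to fix a non-central conjugacy class $C$ of $PSL_n(q)$ with representative $c$ and apply the formula to count tuples in $C^n$ whose product equals a given target $z$. The key estimate is on the ratio $|\chi(c)|/\chi(1)$ for non-trivial characters; for $PSL_n(q)$ this follows from Deligne--Lusztig theory together with the bounds of Gluck or Liebeck--Shalev. The lower bound $cn(PSL_n(q)) \ge n$ for $n \ge 3$ comes from a support/fixed-space argument: a transvection has fixed space of codimension one, so a product of fewer than $n$ transvection conjugates cannot reach a regular element. The case $PSL_2(q)$ is handled by direct inspection of the split torus, non-split torus, and unipotent classes using the classical character table.

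For the alternating groups in part 2, the lower bound $cn(A_n) \ge \lfloor n/2\rfloor$ follows from an analogous support argument with $3$-cycles: the support of a product of $r$ three-cycles has size at most $3r$, so elements whose support is much larger than $3r$ cannot lie in the $r$-th power of the class of $3$-cycles. For the upper bound I would take a class $C$ with representative of minimal support and either adapt the combinatorial construction behind Theorem \ref{bertramherzog}, or use Lemma \ref{isit0} with the Frobenius--Schur character values to show $C^{\lfloor n/2\rfloor} = A_n$ by estimating the sum $\sum_{\chi \ne 1} \chi(c)^{\lfloor n/2\rfloor}\chi(z^{-1})/\chi(1)^{\lfloor n/2\rfloor - 1}$.

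For the Suzuki and Ree groups in parts 3 and 4, the small Lie rank means the character table is explicit and manageable. Here the plan is to apply Corollary \ref{g2help} directly: for each pair $(C,D)$ of non-identity classes, verify
\[
\left| \sum_{\chi \ne 1_G} \frac{\chi(c)^3 \chi(d^{-1})}{\chi(1)^2} \right| < 1,
\]
which reduces to a finite number of numerical estimates once the character degrees and values are tabulated. The main obstacle across the whole theorem is uniform control of the ratios $|\chi(c)|/\chi(1)$ over \emph{all} non-trivial characters and all non-central classes simultaneously; achieving this uniformity in the rank (for the linear and alternating cases) or in the Frobenius parameter $q$ (for the Suzuki and Ree families) is precisely the technical content of the cited papers.
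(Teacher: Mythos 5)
You correctly observe that the paper offers no proof of Theorem~\ref{preliminary}: it is a verbatim compilation of results from \cite{lev}, \cite{dvircovering}, \cite{smallcovering}, and \cite{LAWTHER1998118}, and the ``proof'' is the citation list. Your sketch of the transvection lower bound for $PSL_n(q)$ (which is exactly the $\nu$-argument the paper itself later systematizes in Proposition~\ref{lowernu}), the character-table treatment of $PSL_2(q)$, and the small-rank numerical verification for ${}^2B_2(q)$ and ${}^2G_2(q)$ via Corollary~\ref{g2help} all match the techniques used in the references.

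There is, however, one concrete error in the sketch for part~\ref{alternating}. A pure support-size argument does \emph{not} give the lower bound $cn(A_n)\ge\lfloor n/2\rfloor$: the support of a product of $r$ three-cycles is at most $3r$, so to reach an element of full support this only forces $r\gtrsim n/3$, a strictly weaker bound. The correct lower bound comes instead from a cycle-counting argument. Each multiplication by a $3$-cycle changes the number of cycles of a permutation (fixed points included) by at most $2$, so the minimal number of $3$-cycles needed to write $\pi\in A_n$ is $\lceil(n-c(\pi))/2\rceil$, where $c(\pi)$ is the number of cycles. Taking $\pi$ an $n$-cycle (for $n$ odd) or a cycle of length $n-1$ (for $n$ even) gives $\lceil(n-1)/2\rceil = \lfloor n/2\rfloor$ as required, and this is the bound Dvir actually proves. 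Your upper bound discussion is fine, but if you want a from-scratch proof of part~\ref{alternating} you need to replace the support argument by this cycle-counting argument (or an equivalent parity/rank argument), since otherwise the lower bound you obtain is off by a factor of $3/2$.
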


Note that the covering numbers in Theorem \ref{preliminary} are upper bounds for the X-conjugacy widths, which we will be using later on.

\section{The orbital diameter of a simple diagonal group}

We begin with some notation. Let $T$ be simple, $k\geq 2$ and $G=T^k.X\leq D(k,T)$ in a primitive simple diagonal action, where $X\leq Out(T)\times S_k$ as in Section 1. Let $\Gamma$ be an orbital graph of $G.$  Define $d_{\Gamma}(a,b)$ to be the distance between two vertices, $a$ and $b$ in $\Gamma.$ Denote a path of length at most $m$ between $a$ and $b$ in $\Gamma$ by $$a \frac{\hspace{0.2cm}m\hspace{0.2cm}}{} b.$$ Denote the element $(t,...,t)\in T^k$ as $(t^k).$

\par
Let $t\in T\setminus1$ and $\Gamma_0^t$ be the orbital graph $\{D_A,D_A(1^{k-1},t\}^G.$ Recall $X_0=\pi^{-1}((\rho(X))$ where $\pi$ is the canonical map $Aut(T)\to Out(T)$ and $\rho$ is the projection of $X$ to $Out(T).$ For $g\in T$ we define 
the length of $g$ with respect to $
t$ to be $$l_t^X(g)=\min \{a:g=t^{\pm \alpha_1}\dots t^{\pm \alpha_a},\mbox{for some }\alpha_i\in X_0\}.$$ Recall we write an element of $G$ as $(h_1,\dots,h_k)\sigma_h$ where $h_i\in Aut(T),$ $\sigma_h\in S_k,$ and identify $T$ with $Inn(T)$ when convenient. 

The first result in this section is a lower bound for the diameter.

\begin{theorem}\label{lowerbound}
Let $c=c_X(T,t).$
The diameter of $\Gamma_0^t$ satisfies $diam(\Gamma_0^t) \geq M$ where $$M= \begin{cases} \frac{1}{2}(k-1)c+1 &\mbox{k\,odd} \\ \frac{1}{2}kc &\mbox{k\, even}\end{cases}$$
\end{theorem}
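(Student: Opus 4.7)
The plan is to reformulate distance in $\Gamma_0^t$ combinatorially as a length-in-$T$ problem, and then to exhibit an explicit target coset realising the claimed lower bound.

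First, I would analyse the local structure of the graph. Using $T^k\trianglelefteq G$ together with $G=T^k D_A$ and $D_A\cap T^k=D=\{(\beta^k):\beta\in T\}$, every vertex of $\Gamma_0^t$ admits a representative in $T^k$. Unwinding the definition of the orbital shows that the neighbours of $D_A\cdot h$ are the cosets $D_A\cdot (t^{\pm\alpha})_j\cdot h$, where $(s)_j\in T^k$ denotes the element with $s$ in position $j$ and $1$ elsewhere, $\alpha\in X_0$, and $j$ ranges over all of $\{1,\dots,k\}$ (the latter forced by primitivity, since the projection $\pi(X)\leq S_k$ must be transitive when $k\geq 3$). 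Since position-generators at distinct coordinates commute, a path of length $m$ from $D_A$ to $D_A h_0$, with $h_0=(a_1,\dots,a_k)\in T^k$, rearranges into a product in which the $j$-th coordinate is an element of $(t^{\pm X_0})^{n_j}$ with $\sum_j n_j=m$, subject to an overall correction by some $(\beta^k)\in D$. Comparing coordinates yields the explicit formula
$$d(D_A,\,D_A h_0)\;=\;\min_{\beta\in T}\sum_{j=1}^{k} l_t^X(\beta a_j).$$

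Next, I would pick $x\in T$ with $l_t^X(x)=c$ and take $h_0=(1,x,x^2,\dots,x^{k-1})$. The triangle inequality $l_t^X(u)+l_t^X(v)\geq l_t^X(u^{-1}v)$, applied to $u=\beta x^{i-1}$, $v=\beta x^i$, gives a contribution of at least $l_t^X(x)=c$ for every adjacent pair. For $k=2m$ even, pairing the indices $(0,1),(2,3),\dots,(2m-2,2m-1)$ exhausts all coordinates and yields $\sum_{j} l_t^X(\beta a_j)\geq mc=kc/2=M$. For $k=2m+1$ odd, the same pairing covers $\{0,\dots,2m-1\}$ and leaves the leftover term $l_t^X(\beta x^{2m})$, which is $\geq 1$ unless $\beta=x^{-2m}$; in the residual case the sum collapses (using $l_t^X(x^{-d})=l_t^X(x^d)$) to $\sum_{d=1}^{2m}l_t^X(x^d)$, and replacing the first-pair triangle bound by the exact equality $l_t^X(x)=c$ gives $mc+l_t^X(x^2)\geq mc+1$ whenever $x^2\neq 1$.

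The main obstacle is the degenerate case in which every element of $T$ attaining $l_t^X=c$ is an involution, forcing $l_t^X(x^2)=0$ and breaking the extra unit in the odd case. I would handle this by a short case analysis: either replace $x$ by a non-involution of length $c-1$ and modify a single coordinate of $h_0$ to recover the missing unit, or take two distinct length-$c$ involutions $x_1,x_2$ and use a target of the form $h_0=(1,x_1,x_2,x_1x_2,\dots)$ so that the nontrivial product $x_1x_2$ supplies the extra contribution. The remainder of the argument is routine manipulation of the length function, using only subadditivity and the fact that $l_t^X(w)=0$ iff $w=1$.
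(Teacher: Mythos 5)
Your overall strategy — reformulate distance as a length-sum problem and then exhibit a far vertex — is the same as the paper's, and the reduction (every coset at distance $m$ from $D_A$ has a $T^k$-representative $(a_1,\dots,a_k)$ with $\min_{\beta\in T}\sum_j l_t^X(\beta a_j)\le m$) matches the paper's Claim 1. You assert the stronger \emph{equality} $d(D_A,D_Ah_0)=\min_\beta\sum_j l_t^X(\beta a_j)$; that over-claim is harmless here since you only use the one-sided inequality, but the reverse direction would require justifying that \emph{every} $\alpha\in X_0$ and \emph{every} position $j$ actually occurs in the neighbourhood, which is more than primitivity alone gives when the permutation part of an element of $D_A$ constrains its automorphism part.

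Where you genuinely diverge from the paper is in the choice of target and the telescoping. The paper takes the alternating vector $(1,h,1,h,\dots,1,h)$ (for $k$ even) or $(h,1,h,1,\dots,h,1,h')$ with $h\ne h'$, $l_t^X(h)=l_t^X(h')=c$ (for $k$ odd), and sums the inequalities $l_t^X(h_i^{-1}h_{i+1})\le l_t^X(gh_i)+l_t^X(gh_{i+1})$ \emph{cyclically}, so each coordinate is counted exactly twice; this yields $2\sum_i l_t^X(gh_i)\ge(k-1)c+l_t^X(h_1^{-1}h_k)$, and the ``wrap-around'' term contributes the missing $+1$ (it is $\ge 1$ simply because $h\ne h'$, or equals $c$ when $k$ is even). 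You instead take $(1,x,x^2,\dots,x^{k-1})$ and pair consecutive coordinates linearly. This works cleanly for $k$ even, but for $k$ odd it leaves a genuine gap: after reducing to $\beta=x^{-2m}$ you need $\sum_{d=1}^{2m}l_t^X(x^d)\ge mc+1$, which you obtain from $l_t^X(x^2)\ge 1$, i.e.\ $x^2\ne 1$. The degenerate case where every length-$c$ element is an involution is then handled only by two vague suggestions: ``replace $x$ by a non-involution of length $c-1$ and modify a single coordinate'' (this loses $1$ from each of the $m$ pairs, so it cannot recover the bound as stated), or ``use $h_0=(1,x_1,x_2,x_1x_2,\dots)$'' (the pattern and the resulting pairing are not specified, and it is not verified that the sum bound goes through). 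Neither is a proof.

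The fix is exactly the device you have half-reinvented but in the wrong shape: rather than powers of a single $x$, use two distinct elements $h\ne h'$ of length $c$ (such always exist, e.g.\ $h'=h^{-1}$ if $h$ is not an involution, or $h'=h^\beta$ for a suitable inner $\beta$ otherwise, since $Z(T)=1$), place them in the alternating pattern, and use the cyclic rather than the linear telescoping so that the wrap-around difference $h^{-1}h'\ne 1$ supplies the extra $+1$ without any case analysis on whether powers of $x$ collapse. As written, your proof establishes the even case and the ``generic'' odd case but does not establish the theorem in full.
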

\begin{proof} 
Note that $G=D_AT^k$ so every right coset of $D_A$ has a coset representative in $T^k.$  \par
\underline{\textbf{Claim 1}} Every coset at distance $m$ away from $D_A$ is of the form $$D_A(t_1,\dots,t_k)$$ where $t_i \in T$ are such that $\sum_{i=1}^k l_t^X(t_i) \leq m.$ \par
\underline{\textbf{Proof of Claim 1}} We prove Claim 1 by induction on $m.$ We start with the base case  $m=1.$ Suppose $D_A(g_1,\dots,g_k)$ is a neighbour of $D_A$ where $g_i\in T$. Then there exists $h=(h_1,\dots,h_k)\sigma_h\in G$ such that  $$\{D_A,D_A(1^{k-1},t)\}h=\{D_A,D_A(g_1,\dots,g_k)\}.$$ Hence either $h\in D_A$ or $D_A(1^{k-1},t)h=D_A$. If $h\in D_A$ then $h=(a^k)\sigma_h$, with $a\in Aut(T)$. Now $$D_A(g_1,\dots,g_k)=D_A(1^{k-1},t)h=D_Ah^{-1}(1^{k-1},t)h=D_A(1^{k-1},t^a)^{\sigma_h}$$ as required. If $D_A(1^{k-1},t)h=D_A$, then $$D_A(g_1,\dots,g_k)=D_Ah=D_Ah^{-1}(1^{k-1},t^{-1})h=D_A(1^{k-1},t^{-h_k})^{\sigma_h}.$$ Hence Claim 1 holds for $m=1$.
\par
Now let $m\geq 2.$ Let $D_Ah$ be a coset at distance $m$ from $D_A.$ Then $D_Ah$ is a neighbour of a coset at distance $m-1$ and by the induction hypothesis this coset has form $$D_A(x_1,\dots,x_k)$$ where $x_i\in T$ and $\sum_{i=1}^k l_t^X(x_i) \leq m-1.$ There is an edge between $D_A(x_1,\dots,x_k)$ and $D_Ah.$  Hence there is $f\in G$ such that $$\{D_A,D_A(1^{k-1},t^{\pm a})^{\sigma}\}f=\{D_A(x_1\dots,x_k),D_Ah\}$$ with $f=(f_1,\dots,f_k)\pi$ where $f_i\in Aut(T)$ and $\pi \in S_k.$ Again either $D_Af=D_A(x_1,\dots,x_k)$ or $D_A(1^{k-1},t^{\pm a})^{\sigma}f=D_A(x_1,\dots,x_k).$ 
If $D_Af=D_A(x_1,\dots,x_k)$ then  $f_ix_i^{-1}=f_jx_j^{-1}$ for all $i,j,$ so  $$D_Ah=D_A(1^{k-1},t^{\pm a})^{\sigma}f=D_A(x_1,\dots,x_k)f^{-1}(1^{k-1},t^{\pm a})^{\sigma}f=D_A(x_1,\dots,x_k)(1^{k-1},t^{\pm af_k})^{\sigma\pi}$$ and Claim 1 follows. When $D_A(1^{k-1},t^{\pm a})^{\sigma}f=D_A(x_1,\dots,x_k)$ we obtain the conclusion in a similar way.  
 \par
\underline{\textbf{Claim 2}} There exist $h_1,\dots,h_k \in T$ such that $d_{\Gamma_0^t}(D_A,D_A(h_1,\dots,h_k))\ge M,$ where $M$ is as in the statement of the Theorem. \par
\underline{\textbf{Proof of Claim 2}} By Claim 1 it suffices to find $h_1,\dots,h_k\in T$ such that $\min \limits_{g\in T}\sum_{i=1}^k l_t^X(gh_i) \geq M.$
Let $h,h'\in T$ with $h\neq h'$ and $l_t^X(h)=l_t^X(h')=c.$ Define $$(h_1,\dots,h_k)=\begin{cases}(1,h,1,h,\dots,1,h)&k\mbox{\,even}\\ (h,1,h,1,\dots,h,1,h') &k \mbox{\,odd}\end{cases}.$$ Then
$$l_t^X(h_1^{-1}h_2)=l_t^X(h_2^{-1}h_3)=\dots= l_t^X(h_{k-1}^{-1}h_k)=c\,\,\,\mbox{and}\,\,\, l_t^X(h_1^{-1}h_k)\geq 1.$$
Note that $l_t^X(xy)\leq l_t^X(x)+l_t^X(y)=l_t^X(x^{-1})+l_t^X(y)$ for all $ x,y \in T$, so it follows that for all $i\geq 1$ and any $g\in T$ $$c=l_t^X(h_{i}^{-1}h_{i+1}))\leq l_t^X(gh_{{i}})+l_t^X(gh_{i+1})$$ and $$1\leq l_t^X(h_1^{-1}h_k)\leq l_t^X(gh_1)+l_t^X(gh_k). $$ Summing these up gives  $$2\sum_{i=1}^k l_t^X(gh_i)\geq \sum_{i=1}^{k-1} l_t^X(h_{i}^{-1}h_{i+1})+l(h_1^{-1}h_k)\geq (k-1)c+1.$$
The result now follows for $k$ odd, and for $k$ even we have $l_t^X(h_1^{-1}h_k)=c,$ so we get $\frac{k}{2} c$ as a lower bound.\par
\end{proof}

The following result is an upper bound. 

\begin{lemma}\label{lemma0}
We have $diam(\Gamma_0^t)\leq (k-1)c_i(T).$
\end{lemma}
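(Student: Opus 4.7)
The plan is to construct, for any vertex $D_A w$ of $\Gamma_0^t$, an explicit path from $D_A$ of length at most $(k-1)c_i(T)$; by vertex-transitivity of $G$ on $\Omega$ this will bound the diameter. First I would normalise the target: writing $w = (w_1,\dots,w_k) \in T^k$, left-multiplication by $(w_1^{-1})^k \in D \subseteq D_A$ shows $D_A w = D_A(1,h_2,\dots,h_k)$ with $h_i := w_1^{-1}w_i$. Then for each $i \geq 2$ the definition of $c_i(T) = \max_{t'} c_i(T,t')$ lets me write $h_i = u_{i,1}u_{i,2}\cdots u_{i,r_i}$ as a product of $r_i \leq c_i(T,t) \leq c_i(T)$ elements of $t^T \cup (t^{-1})^T$.

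The path I would build processes coordinates one at a time. For $i = 2, 3, \dots, k$ in turn, starting from $D_A(1,h_2,\dots,h_{i-1},1,\dots,1)$, I would take $r_i$ consecutive edges, each left-multiplying the $i$-th coordinate by one of the factors $u_{i,r_i}, u_{i,r_i-1}, \dots, u_{i,1}$ in that (reverse) order, arriving at $D_A(1,h_2,\dots,h_i,1,\dots,1)$. After handling all $i$ up to $k$ the path terminates at $D_A(1,h_2,\dots,h_k) = D_A w$ using $\sum_{i=2}^k r_i \leq (k-1)c_i(T)$ edges, which is the required bound.

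The hard part is verifying that each proposed single step is genuinely an edge of $\Gamma_0^t$. Extending the neighbour analysis in the proof of Claim 1 of Theorem \ref{lowerbound}, right-translating the edge $\{D_A, D_A(1^{k-1},t)\}$ by an arbitrary element of $T^k$ together with conjugation by diagonal elements $(b^k) \in D$ realises the move ``from $D_A v$ to $D_A(v_1,\dots,v_{k-1}, t^b v_k)$'' for any $b \in T$; the analogous translation of $\{D_A(1^{k-1},t^{-1}), D_A\}$ (the same edge of the undirected orbital graph) yields pre-multiplication of the last coordinate by an arbitrary element of $(t^{-1})^T$. To perform such a move on a coordinate $i \neq k$ one composes with an element of $D_A$ whose $S_k$-component sends $k$ to $i$: the primitivity of $G$ on $\Omega$ forces the projection of $X$ onto $S_k$ to be transitive for $k \geq 3$, while for $k = 2$ only the second coordinate ever requires modification after the initial normalisation. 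With each step identified as a genuine edge of $\Gamma_0^t$, the bound $\mathrm{diam}(\Gamma_0^t) \leq (k-1)c_i(T)$ follows.
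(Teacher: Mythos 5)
Your proposal takes essentially the same route as the paper's proof: normalise the coset to $D_A(1,h_2,\dots,h_k)$, realise single-coordinate moves by translating the defining edge around the graph and using transitivity of $X$ on the coordinates, and iterate, with $c_i(T)$ controlling the step count per coordinate. The one detail to carry along, as the paper does, is that the element of $D_A$ that moves the defining edge into coordinate $i$ generally carries a non-trivial $Out(T)$-component $\alpha_i$, so the factors available for insertion in coordinate $i$ are conjugates of $(t^{\alpha_i})^{\pm 1}$ rather than of $t^{\pm 1}$; you should therefore decompose $h_i$ as a product of at most $c_i(T,t^{\alpha_i})\leq c_i(T)$ elements of $(t^{\alpha_i})^T\cup((t^{\alpha_i})^{-1})^T$, which leaves the final bound $(k-1)c_i(T)$ unchanged (and for $G=T^k.S_k$ every $\alpha_i$ is inner, so your wording is already exact in that case).
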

\begin{proof}
\par 
\underline{\textbf{Claim 1}} There exist $\alpha_i\in Aut(T)$ such that $D_A((1^{i-1},(t^{\alpha_i})^{\pm a},1^{k-i})$ is adjacent to $D_A$ for all $a\in T$ and all $1\leq i \leq k.$\par
\underline{Proof of Claim 1} This is clear for $k=2$ so we can assume $k\geq 3.$ We have $$D_A\frac{\hspace{0.3cm}1\hspace{0.3cm}}{}D_A(1^{k-1},t)$$ by definition of $\Gamma_0.$ Apply $(a^k)\in T^k$ to this to get $$D_A\frac{\hspace{0.3cm}1\hspace{0.3cm}}{}D_A(1^{k-1},t^a).$$ As $G$ is primitive, $X$ acts transitively on the symbols $1,\dots,k$, so for $1\leq i\leq k$ there is an element $(\alpha_i,\dots,\alpha_i).\sigma_i\in D_A$ such that $$D_A(1^{k-1},t)(\alpha_i,\dots,\alpha_i).\sigma_i=D_A(1^{i-1},t^{\alpha_i},1^{k-i}).$$ Applying $(a^k)$ gives  $$D_A\frac{\hspace{0.3cm}1\hspace{0.3cm}}{}D_A(1^{i-1},(t^{\alpha_i})^a,1^{k-i}).$$ Furthermore, applying $(1^{i-1},(t^{\alpha_i})^{-a},1^{k-i})$ to this gives 
$$D_A(1^{i-1},(t^{\alpha_i})^{-a},1^{k-i})\frac{\hspace{0.3cm}1\hspace{0.3cm}}{}D_A$$ and as $a$ was arbitrary Claim 1 follows. \par
\underline{\textbf{Claim 2}} Let $h_i\in T$ $(1\leq i\leq k)$ and let $a \in T.$ Then $D_A(h_1,\dots,h_k)$ is adjacent to  $D_A(h_1,\dots,h_{i-1},(t^{\alpha_i})^{\pm a}h_i,h_{i+1}\dots,h_k)$ for $1\leq i \leq k.$

\underline{Proof of Claim 2} Apply $(h_1,\dots,h_k)$ to 
$$D_A\frac{\hspace{0.3cm}1\hspace{0.3cm}}{}D_A(1^{i-1},(t^{\alpha_i})^{\pm a},1^{k-i}).$$
\par 
\underline{\textbf{Claim 3}} Let $c=c_i(T).$ For any $h_i,\dots,h_k\in T$ and $1\leq i \leq k,$

 $$D_A(h_1,\dots,h_{i-1},1,h_{i+1},\dots,h_k)\frac{\hspace{0.3cm}c \hspace{0.3cm}}{}D_A(h_1,\dots,h_k).$$  \par
\underline{Proof of Claim 3} We know by definition of $c$ that $h_i$ can be expressed as a product of at most $c $ conjugates of $(t^{\alpha_i})^{\pm 1}$, so $$h_i=(t^{\alpha_i})^{\pm a_1}\dots (t^{\alpha_i})^{\pm a_c}$$ for some $a_i\in T.$ Hence by repeatedly applying Claim 2 
$$D_A(h_1,\dots,h_{i-1},1,h_{i+1},\dots,h_k)\frac{\hspace{0.3cm}c\hspace{0.3cm}}{}D_A(h_1,\dots,h_{i-1},(t^{\alpha_i})^{\pm a_1}\dots (t^{\alpha_i})^{\pm a_c},h_{i+1},\dots,h_k)$$ so Claim 3 follows.
\par
Using Claim 3 repeatedly we have the following path $$D_A\frac{\hspace{0.3cm}\ c \hspace{0.3cm}}{}D_A(1,h_2,1^{k-2})\frac{\hspace{0.3cm} c \hspace{0.3cm}}{}D_A(1,h_2,h_3,1^{k-3})\dots \frac{\hspace{0.3cm} c \hspace{0.3cm}}{}D_A(1,h_2,\dots,h_k).$$
As $(1,h_2,\dots,h_k)$ represents an arbitrary coset, the result follows.

\end{proof}

We have an exact result for the orbital diameter for the case when $G=T^2$ or $G=D(2,T).$

\begin{lemma}\label{t2s2}
\begin{enumerate}
    \item If $G=T^2$ then $orbdiam(G)=c_i(T).$
    \item If $G=D(2,T)$ then $orbdiam(G)=c_A(T).$
\end{enumerate}
\end{lemma}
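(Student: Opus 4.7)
The plan is to identify $\Omega$ with $T$ via the bijection $D(a,b) \leftrightarrow a^{-1}b$, under which the base point $D$ corresponds to $1 \in T$ and $(x,y) \in T^2$ acts on $T$ by $g \mapsto x^{-1}gy$. In this concrete model, non-diagonal orbitals of $G$ will correspond to nontrivial orbits of the point stabilizer $D_A$ on $T$, and each undirected orbital graph will become a Cayley graph $\mathrm{Cay}(T, S_t)$ for an inverse-closed set $S_t \ni t$. The passage to undirected graphs will automatically merge the paired orbitals $\Gamma_0^t$ and $\Gamma_0^{t^{-1}}$, forcing $S_t$ to be closed under inversion.

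Next I will determine $S_t$ in each case by computing the $D_A$-action on $T$. In part (1), the stabilizer is $D$, acting by $g \mapsto g^a$, so its orbits are the conjugacy classes of $T$ and $S_t = t^{\pm T}$. In part (2), the stabilizer is $D_A = D.(\mathrm{Out}(T) \times S_2)$: the $\mathrm{Out}(T)$ factor acts by $g \mapsto g^{\alpha}$ for $\alpha \in \mathrm{Aut}(T)$, and the nontrivial element of $S_2$ swaps coset coordinates, which under the identification sends $g = a^{-1}b$ to $b^{-1}a = g^{-1}$. Thus $D_A$ acts on $T$ by $\mathrm{Aut}(T)$-conjugation together with inversion, giving $S_t = t^{\pm \mathrm{Aut}(T)}$.

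With the Cayley graph description in hand, the diameter of $\mathrm{Cay}(T, S_t)$ is by definition the maximum over $g \in T$ of the shortest expression of $g$ as a word in $S_t$. Unwinding Definition \ref{defs}, this equals $c_i(T,t)$ in case (1) and $c_A(T,t)$ in case (2). Taking the maximum over $t \in T \setminus \{1\}$ will then give $\mathrm{orbdiam}(T^2) = c_i(T)$ and $\mathrm{orbdiam}(D(2,T)) = c_A(T)$. Note that the lower bounds also fall out of the $k=2$ (even) case of Theorem \ref{lowerbound}; what the Cayley graph argument really buys is the sharp upper bound in part (2), since Lemma \ref{lemma0} on its own only yields the weaker bound $c_i(T)$.

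The hard part will be the bookkeeping around the identification: confirming that the $S_2$-swap acts on $\Omega \cong T$ precisely by $g \mapsto g^{-1}$ (and not by some twisted map incorporating a stray inner automorphism), and checking that every non-diagonal orbital genuinely appears as some $\Gamma_0^t$ so that maximizing over $t$ really captures the full orbital diameter. Both verifications reduce to short computations once the identification $D(a,b) \leftrightarrow a^{-1}b$ and the $G$-action on $T$ are pinned down.
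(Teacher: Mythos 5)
Your proposal is correct, and it takes a genuinely different (and in my view cleaner) route than the paper. The paper handles part (1) by simply observing that for $k=2$ the lower bound from Theorem \ref{lowerbound} (which gives $\tfrac{k}{2}c_X(T,t)=c_i(T,t)$ when $X_0=T$) and the upper bound from Lemma \ref{lemma0} (which gives $(k-1)c_i(T)=c_i(T)$) coincide; for part (2), since Lemma \ref{lemma0} is too weak, the paper redoes the upper bound by an explicit path construction, translating the edge $D_A\,\text{--}\,D_A(1,t^{\pm1})$ by diagonal elements $(a,a)$ with $a\in\mathrm{Aut}(T)$ to reach $D_A(1,t^{\pm a})$ and then chaining $c_A(T)$ such steps. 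Your route instead identifies $\Omega$ with $T$ once and for all via $D(a,b)\leftrightarrow a^{-1}b$, under which the point stabilizer acts on $T$ by $g\mapsto g^{x}$ (part (1)) or by $g\mapsto (g^{\pm1})^{\alpha}$ with $\alpha\in\mathrm{Aut}(T)$ (part (2)), so that each non-diagonal orbital graph is literally $\mathrm{Cay}(T,S_t)$ with $S_t=t^{\pm T}$ or $t^{\pm\mathrm{Aut}(T)}$, whose diameter is by definition $c_i(T,t)$ or $c_A(T,t)$; maximizing over $t$ then gives both equalities simultaneously. The bookkeeping you flagged does indeed go through: the $S_2$ swap sends $D(a,b)\mapsto D(b,a)$, hence $g=a^{-1}b\mapsto b^{-1}a=g^{-1}$ exactly (no stray inner twist, since the swap normalizes $D$); the $\mathrm{Aut}(T)$-factor of $D_A$ acts as diagonal $(\alpha,\alpha)$ and so by $g\mapsto g^{\alpha}$; and for $k=2$ every non-diagonal orbital is some $\Gamma_0^t$ because $D_A(a,b)=D_A(1,a^{-1}b)$. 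What your approach buys is a single uniform argument that delivers the sharp upper bound in part (2) without any separate path-building, and it exposes exactly where the discrepancy between $c_i$ and $c_A$ comes from (whether or not the point stabilizer reaches the full $\mathrm{Aut}(T)$-conjugation plus inversion). One minor imprecision to tidy when writing it up: it is not literally the $\mathrm{Out}(T)$ factor that acts by $g\mapsto g^{\alpha}$; rather $D_A=\{(\alpha,\alpha)\pi:\alpha\in\mathrm{Aut}(T),\pi\in S_2\}$, and the $\mathrm{Aut}(T)$-conjugation comes from the diagonal $(\alpha,\alpha)$ together with the contribution of $D\leq T^2$.
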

\begin{proof}

1. We first notice that in the case of $k=2$ all orbital graphs are of the form $\Gamma_0^t.$ 
If $G=T^2$ then $X_0=T$, so $c_X(T)=c_i(T).$ Hence the bounds from Theorem \ref{lowerbound} and Lemma \ref{lemma0} coincide, and the result follows. \par
2. Consider $G=D(2,T)\cong T^2.(Out(T)\times S_2)$. In this case $X_{0}\cong Aut(T)$ and also $D_A\cong Aut(T)\times S_2.$ Now Theorem \ref{lowerbound} gives $orbdiam(G)\geq c_A(T).$ We will show the other direction of this inequality. Let $t\in T\setminus1$. Consider the orbital graph $\Gamma=\{D_A,D_A(1,t)\}^G.$ Now $$D_A\frac{\hspace{0.2cm}\hspace{0.2cm}}{}D_A(1,t^{\pm 1})$$ are edges in the graph. For all $ a\in Aut(T)$ apply $(a,a)\in G$ to these to get $$D_A\frac{\hspace{0.2cm}\hspace{0.2cm}}{}D_A(1,t^{\pm a}).$$ Now we can construct a path between $D_A$ and any arbitrary coset $D_A(1,h)$ where $h=t^{\pm a_1}\dots t^{\pm a_c}$ with $c=c_A(T)$ such that  $D_A\frac{\hspace{0.2cm} c\hspace{0.2cm}}{}D_A(1,h).$ This shows that $orbdiam(G)\leq c_A(T)$ and the result now follows.

\end{proof}
\section{Conjugacy Widths of Finite Simple Groups}

\subsection{Bounds on the Conjugacy Width and the Covering Number}
In this section we prove bounds on the conjugacy widths $c(T)$, $c_i(T)$ and $c_A(T)$ for simple groups as stated in the Introduction.
We start with a result on conjugacy widths for simple groups of Lie type. In the following result, the upper bound is proved in \cite{upperrank}.\par 
\begin{theorem}\label{lowerboundrank}
There is a constant $d$ such that $$r-3\leq c_A(T)\leq cn(T)\leq dr$$ for all simple groups $T$ of Lie type of Lie rank $r.$ More precisely, $c_A(T)\geq C_T$ where $C_T$ is as in Table \ref{table:1}. 
\begin{table}[h!]
\centering
\begin{tabular}{ |c|c|c|} 
 \hline
 $T$ & &$C_T$ \\ 
 \hline
  $PSL_n(q)$ &$(n,q)\neq (2,2)\,or\, (2,3)$& $n$ \\ 
  $PSU_n(q)$ &$n\geq 3$& $n$ \\ 
$PSp_n(q)$ &$n\geq 4$,\,\,$(n,q)\neq (4,2)$& $n$ \\
$PSp_4(2)'$ && $3$ \\
$P\Omega^{\epsilon}_n(q)$ &$n\geq 7$& $ \lfloor{\frac{n}{2}}\rfloor$ \\  
 $^2B_2(q)$&$q>2$& $3$\\
 $^2G_2(q)$&$q>3$& $3$\\
 $G_2(3^n)$ & & $3$ \\
  $G_2(q)$ &$3\nmid q$ & $ 4$ \\
 $^3D_4(q)$&& $ 4$\\
 $F_4(2^n)$ && $3$  \\ 
 $F_4(q)$ &$2\nmid q$& $4$\\
 $^2F_4(q)$ &$q>2$& $4$\\
 $^2F_4(2)'$ && $4$\\
 $E_6^{\epsilon}(q)$ && $ 4$\\
  $E_7(q)$ && $ 4$\\
   $E_8(q)$ && $ 5$\\
 \hline
\end{tabular}
\caption{Lower bounds}
\label{table:1}
\end{table}
\end{theorem}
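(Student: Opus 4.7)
The upper bound $cn(T)\leq dr$ is \cite{upperrank}, so the task is to verify the entries $c_A(T)\geq C_T$ of Table~\ref{table:1}; the uniform inequality $c_A(T)\geq r-3$ then follows by an inspection of each row against the Lie rank of $T$. My main tool will be a \emph{commutator dimension} support argument. For a simple group $T$ of Lie type acting on a representation $V$ (the natural module for classical groups, the adjoint module $L(T)$ for exceptional groups), set $d(g):=\dim[V,g]=\dim(g-1)V$. The two key properties are subadditivity $d(gh)\leq d(g)+d(h)$ and the invariance $d(g^\alpha)=d(g^{-1})=d(g)$ for every $\alpha\in\mathrm{Aut}(T)$: the inversion part follows from $g^{-1}-1=-g^{-1}(g-1)$, and the automorphism part because inner, diagonal, field and graph automorphisms preserve the rank of $g-1$, possibly after a twist of $V$. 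Consequently if $g=t^{\pm\alpha_1}\cdots t^{\pm\alpha_k}$ with $\alpha_i\in\mathrm{Aut}(T)$, then $d(g)\leq k\cdot d(t)$, so $c_A(T,t)\geq d(g)/d(t)$ for every $g\in T$.

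For each classical family $PSL_n, PSU_n, PSp_n, P\Omega^\epsilon_n$ I would take $t$ to be a long-root element (a transvection or its orthogonal analogue), giving $d(t)=1$ (or $2$ in the orthogonal case), and take $g$ to be a regular semisimple element in a Coxeter torus, which has no eigenvalue $1$ on $V$ and hence $d(g)=n$; the ratio yields the entries $n$ and $\lfloor n/2\rfloor$. For the exceptional groups $G_2, {}^3D_4, F_4, {}^2F_4, E_6^\epsilon, E_7, E_8$ I apply the same argument in the adjoint representation, where long-root elements again have small $d(t)$ while a regular element $g$ satisfies $d(g)=|\Phi|$ (the number of roots), producing the entries $4$ and $5$. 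The remaining rows with $C_T=3$, namely $^2B_2(q), {}^2G_2(q), PSp_4(2)', G_2(3^n), F_4(2^n)$, are handled uniformly by invoking Theorem~\ref{stronglyreal}: none of these groups is strongly real, so no nontrivial element is a product of two involutions and in particular $c_A(T,t)\geq 3$ for some $t$.

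The main obstacle is the presence of \emph{graph} and \emph{graph-field} automorphisms in bad characteristic, which can fuse long- and short-root classes (for $G_2$ in characteristic $3$ and $F_4$ in characteristic $2$) and thus allow shorter products than the raw root-system count would predict; this is exactly why $C_T$ drops to $3$ in those rows. For each such case I would combine the $\mathrm{Aut}(T)$-invariance verification with an explicit lower bound obtained from Corollary~\ref{g2help} applied to carefully chosen pairs of classes, reading character values from the tables now available for these groups. A minor additional issue is passing the dimension bound from the matrix group down to the simple quotient $T$, but since the relevant centres have bounded order we can always choose $g$ to represent a nontrivial element of $T$. Combining these lower bounds with the upper bound from \cite{upperrank} completes the proof.
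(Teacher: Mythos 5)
Your overall strategy --- a subadditive ``support'' function on a suitable representation, bounded below by a regular semisimple/unipotent element and above by a root element --- is exactly the paper's strategy (Proposition~\ref{lowernu}), so the classical-group half of the argument is sound in spirit. However, there are three substantive gaps.

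First, your function $d(g)=\dim[V,g]$ is defined on the matrix group, not on $T\leq PGL(V)$, and it is not scalar-invariant: different lifts $\tilde g$ of $g$ give different $\dim(\tilde g-1)V$. The paper's $\nu(g)=n-\max_{\lambda\in\mathbb F_q^\star}\dim C_V(\lambda\tilde g)$ is precisely the scalar-twist-invariant correction you need; in particular, to conclude $d(g)=n$ for a Singer element you must know that no scalar multiple of a lift has eigenvalue $1$, i.e.\ that the lift has no eigenvalue in $\mathbb F_q^\star$, which is what $\nu(y)=n$ records. Your argument can be repaired by replacing $d$ with $\nu$, but as written the deduction $c_A(T,t)\geq d(g)/d(t)$ is not valid in $PGL$.

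Second, the claim that the adjoint representation handles \emph{all} of $G_2,{}^3D_4,F_4,{}^2F_4,E_6^\epsilon,E_7,E_8$ is not established and in fact fails numerically for several families. For instance for $F_4$ ($q$ odd), $\dim L(T)=52$ and $|\Phi|=48$, while the codimension of the fixed space of a long-root element on the adjoint module is not small enough for $48/\nu(S)\geq 4$; the paper instead takes $V=V_{26}(q)$, where $\nu(S)=6$ and $\nu(T)\geq 24$ gives $24/6=4$. Similarly the adjoint dimension ratio for $G_2$ (with $|\Phi|=12$) does not reach $4$, and the paper resorts to explicit character-sum computations via Lemma~\ref{isit0}/Chevie for $G_2(q)$ ($3\nmid q$), $^3D_4(q)$, $^2F_4(q)$ and $^2F_4(2)'$, not to a dimension bound. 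You would need to either verify the adjoint ratio case by case (it fails) or switch modules/methods as the paper does.

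Third, your uniform treatment of the $C_T=3$ rows via Theorem~\ref{stronglyreal} breaks for $PSp_4(2)'\cong A_6$, which \emph{is} strongly real (it appears in the list of Theorem~\ref{stronglyreal} as $A_6$), so ``not strongly real'' cannot be the reason $c_A\geq 3$ there. The strong-reality argument is fine for $^2B_2(q)$, $^2G_2(q)$, $G_2(3^n)$ and $F_4(2^n)$, and matches the paper for those cases, but $PSp_4(2)'$ needs a different justification (or indeed a re-examination of what the correct entry should be).
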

Note that the inequality $c_A(T)\leq c_i(T)\leq c(T)\leq cn(T)$ is immediate from the definitions. Hence establishing a lower bound for $c_A(T)$ immediately gives a lower bound for $c_i(T),$ $c(T)$ and $cn(T).$ \par  

\subsubsection{The proof of the lower bound in Theorem \ref{lowerboundrank}}
Let $V=V_n(q).$ For $x\in PGL(V)$ let $\widetilde{x}\in GL(V)$ be a preimage of $x$ and define 
$$\nu(x)=n-\max\limits_{\lambda\in \mathbb{F}_q^{\star}}dim C_V(\lambda \widetilde{x}),$$ the minimal codimension of an $\mathbb{F}_q$-eigenspace of $\widetilde{x.}$ For a subset $S\in PGL(V)$ define $$\nu(S)=\max\limits_{s\in S}\nu(s).$$

\begin{prop}\label{lowernu}
Let $T\leq PGL(V)\cong PGL_n(q)$ be a simple group and let $X$ be a group such that $Inn T\leq X \leq Aut T.$ Let $S_0$ be a non-empty $X$-invariant subset of $T\setminus 1$ such that $\nu(s)=\nu(s')$ for all $s,s'\in S_0$, and let $S=S_0\cup S_0^{-1}.$ Then 
$$c_X(T)\geq \frac{\nu(T)}{\nu(S)}.$$
\end{prop}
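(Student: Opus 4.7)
The plan is to establish the proposition as an immediate consequence of three facts about $\nu$: it is subadditive, it is inverse-invariant, and it is constant on the set $S$ that is furnished by the hypothesis. Once these are in hand, one finds a single ``hard'' element $t\in T$ realizing $\nu(t)=\nu(T)$, writes it as a product of $X$-conjugates of a chosen $s\in S_0$, and applies subadditivity termwise.

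The main technical step is the subadditivity estimate $\nu(xy)\leq \nu(x)+\nu(y)$ on $PGL(V)$. To prove it, first check that $\nu$ is well-defined on $PGL(V)$ (rescaling $\tilde{x}$ by $\mu\in\mathbb{F}_q^{\star}$ only reparametrises the set $\{\lambda\tilde{x}:\lambda\in\mathbb{F}_q^{\star}\}$). Then, given $x,y$, pick $\lambda_1,\lambda_2\in\mathbb{F}_q^{\star}$ realising the maxima in the definitions of $\nu(x),\nu(y)$, so that $\dim C_V(\lambda_1\tilde{x})=n-\nu(x)$ and $\dim C_V(\lambda_2\tilde{y})=n-\nu(y)$. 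A direct computation shows that any vector fixed by both $\lambda_1\tilde{x}$ and $\lambda_2\tilde{y}$ is fixed by $\lambda_1\lambda_2\widetilde{xy}$, so
$$\dim C_V(\lambda_1\lambda_2\widetilde{xy})\;\geq\;\dim\bigl(C_V(\lambda_1\tilde{x})\cap C_V(\lambda_2\tilde{y})\bigr)\;\geq\;n-\nu(x)-\nu(y),$$
by the usual codimension estimate for an intersection. Taking the maximum over $\lambda$ on the left gives $n-\nu(xy)\geq n-\nu(x)-\nu(y)$, i.e.\ the claimed subadditivity. The identity $\nu(x^{-1})=\nu(x)$ is even easier, since $\lambda\tilde{x}v=v$ is equivalent to $\lambda^{-1}\tilde{x}^{-1}v=v$, so the eigenspaces match up.

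With these tools, the proof is short. Pick any $s\in S_0$; since $S_0$ is $X$-invariant we have $s^{\pm X}\subseteq S_0\cup S_0^{-1}=S$, and by the hypothesis together with $\nu(x^{-1})=\nu(x)$ the function $\nu$ is constant equal to $\nu(S)$ on all of $S$. Now choose $t\in T$ with $\nu(t)=\nu(T)$ (which exists by the definition of $\nu(T)$ as a maximum). Set $C=s^{\pm X}$; since $T$ is simple and $C$ contains the nontrivial conjugacy class $s^T$, the set $C$ generates $T$, so we may write $t=c_1\cdots c_k$ with $c_i\in C$ and $k=c_X(T,s)$. Applying subadditivity $k-1$ times gives
$$\nu(T)\;=\;\nu(t)\;\leq\;\sum_{i=1}^{k}\nu(c_i)\;\leq\;k\,\nu(S),$$
so $c_X(T)\geq c_X(T,s)=k\geq \nu(T)/\nu(S)$, as required.

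I do not anticipate a genuine obstacle here; the only points needing care are the well-definedness of $\nu$ on $PGL(V)$, the codimension argument for the intersection of two eigenspaces, and the verification that $S$ actually contains the full class $s^{\pm X}$ (which is precisely where the $X$-invariance of $S_0$ and the definition $S=S_0\cup S_0^{-1}$ are used). All of these are routine once the setup is made explicit.
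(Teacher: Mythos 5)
Your proof is correct and is essentially the same as the paper's: both rest on the observation that if $t$ is a product of $w$ elements whose lifts each fix a subspace of dimension $n-\nu(S)$, then some lift of $t$ fixes a subspace of dimension at least $w(n-\nu(S))-(w-1)n$, which rearranges to $w\geq\nu(T)/\nu(S)$. The paper does the $w$-fold intersection estimate in one step; you repackage the two-fold case as subadditivity of $\nu$ and iterate, which is the same linear algebra stated slightly more modularly. One small imprecision: writing $t=c_1\cdots c_k$ with $k=c_X(T,s)$ presumes $t$ needs exactly that many factors; it suffices to take the minimal expression length $m_t\leq c_X(T,s)$ for $t$ and conclude $c_X(T)\geq c_X(T,s)\geq m_t\geq\nu(T)/\nu(S)$.
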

\begin{proof}
Note that $S$ is a union of $X$-conjugacy classes. 
Choose $s\in S$ and $t\in T$ such that $\nu(s)=\nu(S)$ and $\nu(t)=\nu(T).$ Put $C=s^X$. By hypothesis,  $\nu(y)=\nu(S)$ for all $ y\in C.$ Let $k=n-\nu(S)$ and $r=n-\nu(T)$ so that  $k=\max\limits_{\lambda\in \mathbb{F}_q^{\star}}dim C_V(\lambda \widetilde{s})$ and $r=\max\limits_{\lambda\in \mathbb{F}_q^{\star}}dim C_V(\lambda \widetilde{t}).$ Let $s_1,\dots,s_l\in C.$ Using elementary linear algebra we see that 
$$\max\limits_{\lambda_i\in \mathbb{F}_q^{\star}}dim C_V(\lambda_1 \widetilde{s_1},\dots,\lambda_l\widetilde{s_l})\geq lk-(l-1)n.$$ Suppose that $w\in \mathbb{N}$ is minimal such that $t$ can be expressed as the product of $w$ elements of $C,$ so $w\leq c_X(T).$  Hence $$r\geq wk-(w-1)n.$$ Rearranging gives $$c_X(T)\geq w\geq \frac{n-r}{n-k}=\frac{\nu(T)}{\nu(S)}.$$
\end{proof}
Now we prove the theorem.
\begin{proof}[Proof of Theorem \ref{lowerboundrank}]
\underline{\textbf{Case 1, Classical Groups}}\par 
Let $T$ be a classical simple group with natural module $V=V_n(q).$ Define $S$ to be the set of long root elements in $T$ and let $X=Aut(T).$ Then $S$ is $X$-invariant, provided $T\neq PSp_4(2^a).$ \par 
Suppose first that $T$ is $PSL_n(q),$ $PSp_n(q)$ or $PSU_n(q^{1/2})$ and $T\neq PSp_4(2^a).$ Then the long root elements of $T$ are transvections, which have fixed space on $V$ of dimenstion $n-1,$ so $\nu(S)=1.$ We claim that $$\nu(T)=n.$$ This can be seen as follows. Provided $T\neq PSU_n(q^{1/2})$ with $n$ even, by \cite{huppert} $T$ has a Singer element $y$ (i.e. an element such that $\langle y\rangle$ is irreducible on $V$) and clearly $\nu(y)=n.$ And if $T= PSU_n(q^{1/2})$ with $n=2d\geq 4$, then $SU_n(q^{1/2})$ has a subgroup $SL_d(q)$, and a Singer element of this also satisfies $\nu(y)=n.$ Hence by Proposition \ref{lowernu}, $$c_A(T)\geq \frac{\nu(T)}{\nu(S)}=n.$$ \par 
Next consider $T= PSp_4(2^a)$ with $a>1.$ Let $\widetilde{S}$ be the set of all long or short root elements of $T.$ Then $\widetilde{S}$ in invariant under $Aut(T).$ The generic character table of $T$ is in the computer package Chevie \cite{GH96}. This also contains a function, called ClassMult, which calculates the sum in Lemma \ref{isit0}. Using this it can be checked that $\widetilde{S}^3\cup \widetilde{S}^2\cup \widetilde{S}\cup 1\neq T.$ Hence $c_A(T)\geq 4,$ as required. \par 
Finally suppose $T=P\Omega_n^{\epsilon}(q).$ For $n\leq 6$, $T$ is isomorphic to one of the groups we have already covered, so assume $n\geq7.$ The long root elements of $T$ have fixed point space of dimension $n-2$ on $V,$ so $\nu(S)=2.$ If $n$ is even, then $T$ has an element $y$ such that $\nu(y)=n:$ for $\epsilon=-$ take $y$ to be a Singer element of $\Omega_n^-(q)$ \cite{huppert}; and for $\epsilon=+$, take $y$ to be a Singer element of a subgroup $SL_{\frac{n}{2}}(q)$ of $\Omega_n^+(q).$ If $n$ is odd, then $T$ has an element $y$ such that $\nu(y)=n-1:$ for example choose a Singer element in a subgroup $\Omega_{n-1}^-(q).$ We conclude that $$\nu(T)\geq \begin{cases}n&n\mbox{\,even}\\n-1&n\mbox{\,odd} \end{cases}$$ Now the conclusion follows from Proposition \ref{lowernu}.\par 
\underline{\textbf{Case 2, Exceptional Groups}}
There are only two families of exceptional groups whose covering number is known; the Suzuki groups, $^2B_2(q)$ and the small Ree groups $^2G_2(q)$ both have covering number 3 by Theorem \ref{preliminary}. By Theorem \ref{stronglyreal} they are not strongly real, so in fact $$c_A(^2B_2(q))=c_A(^2G_2(q))=c_i(^2B_2(q))=c_i(^2G_2(q))=c(^2B_2(q))=c(^2G_2(q))=3.$$  \par 
Now consider $T=E_8(q)$, $E_7(q)$ or $E_6^{\epsilon}(q).$ Let $V=V_n(q)$ be the adjoint module for $T$, of dimension $248,$ $133$ or $78$, respectively, and let $S$ be the set of long root elements of $T.$ Then $S$ is invariant under $Aut(T).$ From Tables 9, 8 and 6 of \cite{unijordan}, we see that $\nu(S)$ is as in the table:
\begin{tabular}{c|c c c}
\\
    $T$ &$E_8(q)$&$E_7(q)$&$E_6^{\epsilon}(q)$  \\\hline 
    $\nu(S)$ & $58$&$34$&$22$ 
    \\ \\
\end{tabular}. \newline Also $T$ has regular unipotent elements $y$, and these have fixed point spaces of dimension $8,$ $7$ or $6,$ respectively.  Hence $\nu(T)\geq 240,$ $126$ or $72$, and the bound 
$c_A(T)\geq \frac{\nu(T)}{\nu(S)}$ gives the conclusion of the theorem. \par 
Next consider $T=F_4(q)$, $q$ odd. Again let $S$ be the set of long root elements, which is invariant under $Aut(T)$, and consider the action on the 26-dimensional module $V=V_{26}(q),$ as given in \cite[Table 3]{unijordan}. We see that $\nu(S)=6$ while regular unipotent elements show that $\nu(T)\geq 24.$ Hence $c_A(T)\geq 4.$\par 
Next we claim that $c_A(T)\geq 4$ for $T=$ $^3D_4(q)$, $^2F_4(q)$ $(q> 2)$, $^2F_4(2)'$ or $G_2(q)$ ($q\neq 3^a$). The character tables of these are available in Chevie and GAP. Using Lemma \ref{isit0} we can compute that for a root element, $r$, $c_A(T,r)\geq 4.$  \par 
The last groups remaining to consider are $T=F_4(2^a)$ and $G_2(3^a).$ These groups are not strongly real by Theorem \ref{stronglyreal}, so $c_A(T)\geq 3$ for these. \par 
This completes the proof of Theorem \ref{lowerboundrank}.
\end{proof}

\subsection{Conjugacy Width 2}

There has been some interest around classifying groups with a small covering numbers. In \cite[Thm 2.1]{smallcovering} it is proven that the only finite simple group with covering number 2 is $J_1$ . It turns out that the same is true for the (inverse) conjugacy width. 
\begin{prop}\label{j1only} For a simple group $T$ the following are equivalent; \begin{itemize}
     \item[(i)] $cn(T)=2$
    \item[(ii)] $c_i(T)=2$
    \item [(iii)]$c(T)=2$
     \item [(iv)]$T\cong J_1.$
\end{itemize}

\end{prop}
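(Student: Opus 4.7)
The plan is to prove the equivalences in the cyclic order (iv) $\Rightarrow$ (i) $\Rightarrow$ (iii) $\Rightarrow$ (ii) $\Rightarrow$ (iv). The first three implications are essentially free: (iv) $\Rightarrow$ (i) is \cite[Thm 2.1]{smallcovering}, while (i) $\Rightarrow$ (iii) and (iii) $\Rightarrow$ (ii) follow from the inequality chain $c_i(T)\le c(T)\le cn(T)$ recorded after Definition \ref{defs}, together with the observation that $c_i(T)\ge 2$ for every non-abelian simple $T$ (the identity element cannot lie in $t^T\cup (t^{-1})^T$ when $t\neq 1$).

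The core content lies in (ii) $\Rightarrow$ (iv). Assuming $c_i(T)=2$, one has $c_A(T)\le c_i(T)=2$, and $c_A(T)\ge 2$ trivially, so $c_A(T)=2$. By Theorem \ref{aut2}, $T$ is isomorphic to $J_1$ or to $PSL_2(q)$ with $q\equiv 1 \mod 4$ or $q=2^{2m}$. I then need to exclude the $PSL_2(q)$ cases by exhibiting $t\in T$ with $c_i(T,t)\ge 3$. A useful preliminary remark is that in $PSL_2(q)$ every element is $T$-conjugate to its inverse, since the Weyl-group element inverting a maximal torus is realised inside $T$; hence $(t^{-1})^T=t^T$ for all $t$, and it suffices to exhibit $t$ with $c(T,t)\ge 3$.

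For $q$ odd (so $q\equiv 1 \mod 4$, $q\ge 5$) I take $t$ to be an involution, noting that $PSL_2(q)$ has a single class of involutions. The subgroup generated by any two involutions is dihedral, hence contained in the normalizer $N_T(H)$ of some maximal torus, which is dihedral of order $q\pm 1$; every element of such a subgroup has order dividing $2(q\pm 1)$. But a unipotent element of $T$ has order $p=\mathrm{char}(\mathbb{F}_q)$, coprime to $2(q\pm 1)$, so no unipotent lies in $t^T\cdot t^T$. As unipotents are clearly not involutions, $c(T,t)\ge 3$. For $q=2^{2m}\ge 4$ I take $s$ to be a generator of a non-split torus of order $q+1$, so $|s^T|=q(q-1)$, and apply Lemma \ref{isit0} with $C=s^T$ and $z=j$ an involution of $T$. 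Using the character table of $PSL_2(q)$: the trivial character contributes $1$; the Steinberg character (degree $q$) and the $(q-2)/2$ principal series characters (degree $q+1$) each contribute $0$, vanishing on $s$ or on $j$; and the $q/2$ discrete series characters $\sigma_\theta$ (degree $q-1$) satisfy $\sigma_\theta(s)=-(\theta(s)+\theta(s)^{-1})$ and $\sigma_\theta(j)=-1$, so their total contribution collapses to $-1$ via the elementary identity $\sum_{i=1}^{q/2}(\zeta^i+\zeta^{-i})^2=q-1$ for a primitive $(q+1)$-st root of unity $\zeta$ (obtained by splitting $\sum_{i=1}^{q}\zeta^{2i}=-1$ under $i\leftrightarrow q+1-i$). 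The total character sum is $0$, so no involution lies in $(s^T)^2$, and $c(T,s)\ge 3$.

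The main obstacle is the character calculation in the case $q=2^{2m}$; everything else is either cited or an elementary subgroup/order argument. An alternative route would bypass this computation by first establishing $c(PSL_2(q))=3$ for all $q>2$ (part of Theorem \ref{mainthm}), which by the reality observation above immediately gives $c_i(PSL_2(q))=3$.
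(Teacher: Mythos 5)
Your high-level route genuinely differs from the paper's. The paper goes $(iii)\Rightarrow(iv)$ by invoking Theorem \ref{stronglyreal} together with Theorem \ref{lowerboundrank} to cut the candidates down to $PSL_2(q)$ with $q\not\equiv 3\,(\mathrm{mod}\,4)$, $A_{10}$, $A_{14}$, $J_1$ or $J_2$, then rules out each by quoting \cite{smallcovering}, Theorem \ref{bertramherzog}, and a GAP computation; you instead go $(ii)\Rightarrow(iv)$ by observing $c_A(T)\le c_i(T)=2$ and invoking Theorem \ref{aut2}, which leaves only $J_1$ and $PSL_2(q)$ with $q\equiv 1\,(\mathrm{mod}\,4)$ or $q=2^{2m}$. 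That reduction is logically sound (the proof of Theorem \ref{aut2} does not use Proposition \ref{j1only}) and it does buy you a shorter case list. The reality observation --- that every element of $PSL_2(q)$ is $T$-conjugate to its inverse for the $q$ in question, so $c_i(T,t)=c(T,t)$ --- is also correct and useful, and your character-sum computation for $q=2^{2m}$ is correct: with $s$ a generator of the nonsplit torus, the sum in Lemma \ref{isit0} for $z$ an involution really does collapse to $0$ exactly as you say.

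The odd case, however, contains a genuine error. You take $t$ to be an involution and claim that $\langle u,v\rangle$ for involutions $u,v\in PSL_2(q)$ lies in the normalizer of a maximal torus, hence that $uv$ cannot be unipotent. This is false precisely when $q\equiv 1\,(\mathrm{mod}\,4)$: in that case the Borel subgroup of $PSL_2(q)$ contains the involution $z=\mathrm{diag}(i,-i)Z$ (where $i^2=-1$), and $z$ inverts the root subgroup, so $D_{2p}\le PSL_2(q)$ and a generating pair of involutions of $D_{2p}$ multiplies to a unipotent element. Concretely, in $PSL_2(5)\cong A_5$ one has $(1\,2)(3\,4)\cdot(1\,3)(4\,5)$ equal to a $5$-cycle, and in $PSL_2(9)\cong A_6$ one has $(1\,2)(4\,5)\cdot(1\,3)(4\,5)=(1\,2\,3)$. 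More fundamentally, Theorem \ref{stronglyreal} says $PSL_2(q)$ is strongly real whenever $q\not\equiv 3\,(\mathrm{mod}\,4)$, so for the $q$ in your case $c(T,t)=2$ for $t$ an involution --- you have chosen exactly the class that does \emph{not} give a lower bound of $3$. The fix is to use the same class you use in the even case: take $s$ a generator of the nonsplit torus (of order $(q+1)/2$ here) and either run the analogous character computation or, as the paper does, cite the analysis of the $R_j$ classes in \cite[Thm 4.2(a)]{smallcovering}. Finally, your proposed fallback of citing $c(PSL_2(q))=3$ from Theorem \ref{mainthm} would be circular, since the paper's proof of Theorem \ref{mainthm} invokes Proposition \ref{j1only} to pass from $cn(T)=3$ to $c(T)=3$ for these groups; you would need to cite \cite{smallcovering} directly instead.
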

\begin{proof}Clearly $(i)\Rightarrow(ii)\Rightarrow(iii)$ and $(iv)\Rightarrow(i)$ so it remains to prove $(iii)\Rightarrow(iv).$ So suppose $c(T)=2.$ By Theorems \ref{stronglyreal} and \ref{lowerboundrank} we get that $T\cong PSL_2(q),$ $q\not \equiv 3 \mod 4,$ $A_{10}$, $A_{14}$, $J_1$ or $J_2.$ The proof of \cite[Thm 4.2 (a)]{smallcovering} shows that there is a conjugacy class $C\subseteq PSL_2(q)$ such that $PSL_2(q)\neq C^2\cup C\cup 1,$ so  $c(PSL_2(q))\geq 3.$ Similarly, products of at most two $3$-cycles cannot express all elements in $A_{10}$ and $A_{14},$ so their conjugacy width is greater than 3, and using GAP we can find an element $r$ in $J_2$ such that $c(T,r)\geq 3.$ Hence $T=J_1$ and the result follows.
\end{proof}
However, the next result shows that there is an infinite family of finite simple groups, such that the conjugacy width is not 2 but the automorphism conjugacy width is 2. 

\begin{theorem}\label{aut2}
Let $T$ be a finite simple group. Then $c_A(T)=2$ if and only if either $T\cong J_1$ or $T\cong PSL_2(q)$ with $q \equiv 1 \mod 4$ or $q= 2^{2m}.$
\end{theorem}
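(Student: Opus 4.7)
The plan is to establish the two directions of the equivalence separately, with the forward implication being the main content.

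For the forward direction $(\Rightarrow)$, I would first reduce to a small list of candidates via structural arguments. Picking any involution $t\in T$ (which exists by Feit--Thompson), the hypothesis $c_A(T)=2$ forces every element of $T$ to be a product of at most two $Aut(T)$-conjugates of $t$; since each such conjugate is an involution, $T$ must be strongly real. Combining Theorem~\ref{stronglyreal} with the lower bounds in Theorem~\ref{lowerboundrank} (which force $C_T\leq 2$) narrows the candidates to $PSL_2(q)$ with $q\not\equiv 3 \pmod 4$, the alternating groups $A_{10}$ and $A_{14}$, and the sporadic groups $J_1$ and $J_2$. Note that $A_5\cong PSL_2(5)$ and $A_6\cong PSL_2(9)$ are already subsumed under the $PSL_2$ family.

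Next I would eliminate the alternating candidates by a support argument: for $n\in\{10,14\}$ one has $Aut(A_n)=S_n$, so the $Aut$-closure of a 3-cycle is exactly the class of 3-cycles, and any product of two such elements has support at most $6$. Since $A_{10}$ contains a 9-cycle and $A_{14}$ contains a 13-cycle (both of support greater than $6$), such elements cannot be expressed as products of two 3-cycles, so $c_A(A_n)\geq 3$. For $J_2$, a direct application of Lemma~\ref{isit0} to the character table (available in the ATLAS) should exhibit a nontrivial class $C$ and element $g$ with vanishing character sum, so that $g\notin C\cdot C$; the proof of Proposition~\ref{j1only} already identifies such a class with $c(J_2,r)\geq 3$, and one would only need to check that the witness is stable under the outer automorphism (since $|Out(J_2)|=2$) to deduce $c_A(J_2)\geq 3$.

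The substantive case is $T=PSL_2(q)$ with $q\not\equiv 3 \pmod 4$. Here the plan is to use the classical generic character table of $PSL_2(q)$ together with a description of $Aut(T)$-orbits on conjugacy classes (under diagonal and field automorphisms) to reduce each claim of the form $g\in C\cdot C$ to a finite character sum via Lemma~\ref{isit0}. For $q\equiv 1 \pmod 4$, the diagonal automorphism fuses the two unipotent $T$-classes, and for $q=2^{2m}$, the field automorphism of even order fuses enough pairs of semisimple classes that every relevant sum is strictly positive. By contrast, for $q=2^{2m+1}$ with $q>2$ the field automorphism has odd order and certain semisimple classes of order dividing $q+1$ remain in small $Aut$-orbits; a character computation should exhibit an element of $T$ not lying in the square of such an $Aut$-closure, forcing $c_A(T)\geq 3$.

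The converse direction is then straightforward: for $T=J_1$ one has $c_A(T)\leq cn(T)=2$ by Proposition~\ref{j1only}, and for $T=PSL_2(q)$ in the specified range the bound $c_A(T)\leq 2$ is the content of the character-sum verifications above, while $c_A(T)\geq 2$ is immediate from non-triviality. The main obstacle will be the $PSL_2(q)$ character analysis in the last step of the forward direction: one must verify nonvanishing of a family of character sums uniformly in $q$, and in particular pin down precisely why the parity of the exponent $a$ in $q=2^a$ matters.
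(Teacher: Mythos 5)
Your proposal follows essentially the same route as the paper: reduce via strong reality (Theorem~\ref{stronglyreal}) and the rank lower bound (Theorem~\ref{lowerboundrank}) to $PSL_2(q)$ with $q\not\equiv 3 \pmod 4$ together with $A_{10}$, $A_{14}$, $J_1$, $J_2$; eliminate the alternating groups via $\mathrm{Aut}$-invariance of the $3$-cycle class and $J_2$ by a character-sum check; and split the $PSL_2(q)$ case by the action of diagonal and field automorphisms on the classes of large conjugacy width, with the key obstruction for $q=2^{2m+1}$ being a semisimple class of order dividing $q+1$ fixed by the Frobenius. The one thing the paper uses that would noticeably shorten your $PSL_2(q)$ analysis is the explicit result of Arad--Chillag--Moran \cite[Thm 4.2(a)]{smallcovering} pinning down exactly which $T$-classes have width $3$ (the root classes for $q$ odd, the classes $R_j$ of elements of order dividing $q+1$ for $q$ even, with $R_{(q+1)/3}$ the $\mathrm{Aut}$-fixed witness when $q=2^{2m+1}$), so that only a small handful of character sums need checking rather than all pairs of classes.
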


\begin{proof}
We begin by finding $c_A(PSL_2(q))$ for all $q\ge 5.$ We know by Proposition \ref{preliminary} that $c(PSL_2(q))=3.$ If $q \equiv 3 \mod 4,$ then  Theorem \ref{stronglyreal} implies that the involution class has conjugacy width greater than $2$, hence $c_A(PSL_2(q))=3.$
If $q$ is a power of $2,$ then by \cite[Thm 4.2 (a)]{smallcovering} the only conjugacy classes with conjugacy width 3 are those denoted by $R_j$ in \cite{smallcovering}; these have class representatives $(b^j)$ where $b$ is an element of order $q+1$ and $1\leq j \leq \frac{q}{2}$. In fact, \cite[Thm 4.2 (a)]{smallcovering} gives that $R_j^2=G\setminus C_2$ where $C_2$ are the root elements. For $q=2^{2m+1},$ the class $R_{\frac{q+1}{3}}$ is fixed by all outer automorphisms, so $c_A(PSL_2(2^{2m+1})=3.$ For $q=2^{2m},$ there is no class of type $R_j$ which is fixed by all outer automorphisms. Using Lemma \ref{isit0} and Chevie\cite{GH96} we can show that $C_2\subseteq R_jR_{l},$ where $l=2j$ if $2j\leq \frac{q}{2}$ and $l=q+1-2j$ if $2j> \frac{q}{2},$ so   $c_A(PSL_2(2^{2m}))=2.$ For $PSL_2(q)$ with $q \equiv 1 \mod 4$ we know from \cite{smallcovering} that the only classes with conjugacy width equal to $3$ are the two classes of root elements, and $PSL_2(q)$ has an outer automorphism that interchanges these two classes. Using Lemma \ref{isit0} we can show that every element can be expressed as a product of two root elements, hence $c_A(PSL_2(q))=2.$ This proves the right to left implication of the theorem.\par  
For the converse, suppose $c_A(T)=2.$ Then any element of $T$ can be expressed as a product of at most two involutions, so $T$ is strongly real, hence is given by Theorem \ref{stronglyreal}. If $T$ is a simple group of Lie type, then by Theorems \ref{lowerboundrank} and \ref{stronglyreal} we have that $T\cong PSL_2(q)$ with $q\not \equiv 3 \mod 4.$ We proved above that $c_A(PSL_2(q))=3$ for $q=2^{2m+1},$ so $q \equiv 1 \mod 4$ or $q= 2^{2m}.$
If $T$ is not of Lie type then by Theorem \ref{stronglyreal}, $T\cong A_{10}$, $A_{14}$, $J_1$ or $J_2.$ 
All automorphisms of $A_{10}$ and $A_{14}$ fix the class of 3-cycles, so their automorphism conjugacy width is not 2. Looking at the character table of $J_2$ and using Lemma \ref{isit0} we conclude that $c(J_2)=c_A(J_2)\geq 3.$ Hence $T=J_1.$ 
\end{proof}

\subsection{Conjugacy Width and Covering Number 3}
The next result gives a similar classification of groups with conjugacy width 3. 
\begin{theorem}\label{mainthm}
Let $T$ be a finite simple group. 
\begin{enumerate}
    \item 
$c(T)=3$ if and only if $T$ is isomorphic to one of the following: \begin{itemize}
    \item $PSL_2(q)$ \, with\, $q>2$
    \item $PSL_3(q)$
    \item$ PSU_3(q)$ \, with\, $3 \vert q+1,\,q>2$
    \item $^2B_2(q)$ \, with\, $q>2$
    \item $^2G_2(q)$\, with\, $q>3$
    \item $G_2(3^n)$ \, with\, $n\geq 2$
    \item $A_5$, $A_6$, $A_7$
    \item $M_{11}$, $M_{22}$, $M_{23}$, $M_{24}$, $J_3$, $J_4$, $Mcl$, $Ru$, $Ly$, $O'N$, $Fi_{24'}$, $Th$, $M.$
\end{itemize}
\item  $cn(T)=3$ if and only if $c(T)=3.$ 
\item  $c_i(T)=3$ if and only if $c(T)=3.$ 
\item If $c_A(T)=3$ then one of the following holds: 
\begin{enumerate}
    \item $c(T)=3$ and $T\neq PSL_2(q)$ with $q\equiv 1\,(mod\,4)$ or $q= 2^{2m}.$
    \item $T\cong F_4(2^n).$
\end{enumerate}
\end{enumerate}
\end{theorem}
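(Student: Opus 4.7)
The plan rests on the inequality $c_A(T) \leq c_i(T) \leq c(T) \leq cn(T)$ combined with Theorem \ref{lowerboundrank}, Theorem \ref{stronglyreal}, and Proposition \ref{j1only}. For part (1), I would first use Table \ref{table:1} to exclude every simple group of Lie type with $C_T \geq 4$, leaving as candidates only $PSL_2(q)$, $PSL_3(q)$, $PSU_3(q)$, $PSp_4(2)' \cong A_6$, $^2B_2(q)$, $^2G_2(q)$, $G_2(3^n)$, and $F_4(2^n)$. For alternating groups, Theorem \ref{preliminary}(2) gives $cn(A_n) = \lfloor n/2 \rfloor$ and Theorem \ref{bertramherzog} shows that for $n \geq 8$ not every element of $A_n$ is a product of three $3$-cycles, so $c(A_n) \geq 4$; this leaves $A_5, A_6, A_7$. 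Sporadic groups are handled individually via the ATLAS, GAP, or Chevie \cite{GH96} character tables.

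For the "if" direction of part (1), $c(T) = 3$ follows from Theorem \ref{preliminary} together with Proposition \ref{j1only} in the cases $PSL_2(q)$ ($q > 3$), $PSL_3(q)$, $^2B_2(q)$, $^2G_2(q)$, and $A_5, A_6, A_7$. For $PSU_3(q)$ with $3 \mid q+1$, for $G_2(3^n)$ with $n \geq 2$, and for each listed sporadic group, I would apply Lemma \ref{isit0} (or Corollary \ref{g2help}) to the relevant character tables to verify $cn(T) \leq 3$, while the lower bound $c(T) \geq 3$ comes either from the fact that these groups are not strongly real (Theorem \ref{stronglyreal}) or directly from Table \ref{table:1}. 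For the "only if" direction, the candidates that remain but are absent from the list (such as $P\Omega_7(q)$, $PSU_3(q)$ with $3 \nmid q+1$, $G_2(q)$ with $3 \nmid q$, and each unlisted sporadic group) must be excluded by exhibiting a conjugacy class $C$ with $C^3 \neq T$, again via Lemma \ref{isit0}.

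Parts (2) and (3) follow as immediate corollaries. For (2), $cn(T) = 3$ forces $c(T) \leq 3$, and $c(T) = 2$ would give $T \cong J_1$ (which has $cn = 2$) by Proposition \ref{j1only}, so $c(T) = 3$; the converse is exactly the $cn(T) \leq 3$ verification carried out in the "if" direction of part (1). Part (3) is identical in structure, using the implication $c_i(T) = 2 \Rightarrow T \cong J_1$ from Proposition \ref{j1only}. For part (4), if $c_A(T) = 3$ then $c(T) \geq 3$. If $c(T) = 3$ then $T$ lies in the part (1) list, and we remove the groups with $c_A(T) = 2$ identified in Theorem \ref{aut2}, namely $PSL_2(q)$ with $q \equiv 1 \pmod 4$ or $q = 2^{2m}$, giving case (a). If $c(T) \geq 4$ then $T$ is excluded from the part (1) list but must still satisfy $c_A(T) \leq 3$, so Table \ref{table:1} leaves only $T \cong F_4(2^n)$, giving case (b).

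The main obstacle is the character-sum verification of $cn(T) \leq 3$ for the cases where no published general result applies, in particular for $PSU_3(q)$ with $3 \mid q+1$, for $G_2(3^n)$ with $n \geq 2$, and for each of the fourteen listed sporadic groups; this requires systematic use of Lemma \ref{isit0} together with explicit character tables. A secondary subtlety is ruling out borderline candidates in the "only if" direction, especially $P\Omega_7(q)$ whose entry $C_T = 3$ in Table \ref{table:1} does not immediately preclude $c = 3$, so a separate bad conjugacy class must be produced to show $c(P\Omega_7(q)) \geq 4$.
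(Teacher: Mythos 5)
Your overall plan matches the structure of the paper's proof: reduce the Lie-type case via Theorem \ref{lowerboundrank}/Table \ref{table:1}, dispose of alternating groups with Theorems \ref{preliminary} and \ref{bertramherzog}, handle sporadics computationally, and derive parts (2)--(4) as corollaries of part (1) together with Proposition \ref{j1only} and Theorem \ref{aut2}. However, several of the places you flag as ``to be verified'' are genuine gaps rather than routine checks, and one reduction step is actually incorrect as stated.

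First, for $PSU_3(q)$ you propose to verify $cn(T)\le 3$ for $3\mid q+1$ and to exclude the case $3\nmid q+1$ by inspecting character tables via Lemma \ref{isit0}. But this is an infinite family, so pointwise character-table checks do not suffice; one needs a uniform argument. The paper instead cites a published result of Orevkov computing $cn(PSU_3(q))$ exactly (equal to $3$ if $3\mid q+1$ and $4$ otherwise), and uses his Table 2 to get $c(T,t)=4$ for a transvection, whence $c_A(T)=c_i(T)=4$ when $3\nmid q+1$. Without invoking that reference (or reproducing a generic-character argument), your proof of this case is incomplete.

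Second, excluding $P\Omega_7(q)$ is not a matter of ``producing a bad conjugacy class via Lemma \ref{isit0}'' -- this is again an infinite family with no available generic character table. The paper proves $c_A(P\Omega_7(q))\ge 4$ by passing to the $8$-dimensional spin module, using triality to show that long root elements act with $\nu(S)=2$ and exhibiting a Singer-type element $R$ in $SL_4(q)\cong Spin_6^+(q)$ with $\nu(R)=8$, then applying Proposition \ref{lowernu}. Your proposal never mentions Proposition \ref{lowernu}, which is exactly the tool that fills this hole.

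Third, the $G_2(3^n)$, $n\ge 2$ case, which you correctly identify as the main obstacle, is in the paper a substantial self-contained argument: one partitions $\mathrm{Irr}(T)$ by polynomial degree, extracts explicit upper bounds on $\lvert\chi(x)\rvert$ from Enomoto's tables, bounds the sum in Corollary \ref{g2help}, and deals separately with the small list of exceptional class pairs. Your outline acknowledges the difficulty but offers no mechanism; this step cannot be waved away. You also need to separately exclude $G_2(3)$ (the $n=1$ case), which the paper shows via GAP has $c=c_i=c_A=4$.

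Finally, your part (4) argument asserts that ``Table \ref{table:1} leaves only $T\cong F_4(2^n)$'' once $c(T)\ge 4$ and $c_A(T)\le 3$. This is false as stated: Table \ref{table:1} gives $C_T=3$ also for $PSU_3(q)$, $P\Omega_7(q)$, and $G_2(3^n)$. The conclusion is correct only after one has separately shown $c_A\ge 4$ for $PSU_3(q)$ with $3\nmid q+1$, for $P\Omega_7(q)$, and for $G_2(3)$ -- precisely the three arguments missing above. So while the high-level decomposition is the same as the paper's, the hard content (Orevkov's result, the spin-module application of Proposition \ref{lowernu}, and the Enomoto-table estimates) is not present, and the part (4) reduction needs to be repaired to account for the $C_T=3$ entries in Table \ref{table:1}.
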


\begin{proof}
\underline{\textbf{ Case 1, Alternating Groups}}\par 

By Theorem \ref{preliminary} the only alternating groups with covering number 3 are $ {A_5},\, {A_6} \,$and$\ {A_7} $. Also by Theorem \ref{bertramherzog}, $c_A(A_n)\geq 4$ for $n\geq 8.$ Finally $c(A_n)=c_i(A_n)=3$ for $n=5,$ $6$ and $7$ while $c_A(A_n)=2$,$2,$ $3$ respectively. \par
\underline{\textbf{Case 2, Groups of Lie type}}\par 
If $T$ is a group of Lie type such that $c_A(T)\leq 3$ then by Theorem \ref{lowerboundrank}, $T$ is isomorphic to one of the following:
$$PSL_2(q),\,\,PSL_3(q),\,\,PSU_3(q),\,\,P\Omega_7(q),\,\,^2B_2(q),\,\,^2G_2(q),\,\,G_2(3^a),\,\,F_4(2^a).$$
By Theorem \ref{preliminary}, $cn(T)=3$ for $T=PSL_2(q)$, $PSL_3(q)$ with $q\geq 4$, $^2B_2(q)$ and $^2G_2(q)$; by Proposition \ref{j1only} for these groups $c(T)=c_i(T)=3$; and by Theorem \ref{aut2}, $c_A(T)=3$ apart from $PSL_2(q)$ with $q \equiv 1 \,(mod\, 4)$ or $q= 2^{2m}.$ For $T=PSL_3(2)$ or $PSL_3(3)$ we can show using GAP that $cn(T)=c(T)=c_i(T)=c_A(T)=3.$ \par 
By \cite[Cor 1.9]{orevkov} for $T=PSU_3(q)$ $(q>2)$ $$cn(T)=\begin{cases}\mbox{3}&3\vert q+1\\\mbox{4}&3\nmid q+1 \end{cases}$$ Hence if $3\vert q+1$ then also $c(T)=c_i(T)=c_A(T)=3$. For $3\nmid q+1$ it is shown in \cite[Table 2]{orevkov} that for a transvection $t\in T$, $c(T,t)=4$ and hence $c_A(T)=4$ and $c_i(T)=4.$\par 
Next consider $T=P\Omega_7(q),$ $q$ odd. We claim that $c_A(T)\geq 4.$ To prove this let $V$ be the $8$-dimensional spin representation of $Spin_{7}(q).$
Let $S$ be the set of long root elements of $T$, which is invariant under $Aut(T)$. Using triality we can show that the long root elements of $Spin_{7}(q)$ act on $V$ as long root elements of $\Omega_{8}^+(q),$ hence they fix a $6$-dimensional subspace of $V$ pointwise, and so $\nu(S)=2.$   
We know that $Spin_6^+(q)\cong SL_4(q)$ embeds into $Spin_7(q).$ Put $V_4$ as the natural module of $SL_4(q).$ Then by \cite[2.2.8]{kleidman} $SL_4(q)$ acts on $V$ as on $V_4\oplus V_4^\ast.$ Take $R$ to be a Singer cycle in $SL_4(q).$ Now $R$ on $V_4^\ast$ is also Singer cycle, and hence $\nu(R)=8.$ Now by Proposition \ref{lowernu}, $c_A(T)\geq 4.$ \par 
Consider $T=G_2(3^a).$ In the case of $a=1,$ the character table is in GAP, so using Lemma \ref{isit0} we find that $c_A(G_2(3))=c_i(G_2(3))=c(G_2(3))=4.$ For $a\geq 2$, even though the generic character table is available in Chevie, solving this problem is not possible using only Chevie. This is due to the fact that when running the ClassMult function, Chevie outputs values for the character sum in Lemma \ref{isit0} together with a set of many "possible exceptions" which give conditions under which this value might not hold. For some classes it is possible to deal with these exceptions, and for others it is not, so we used another method of solution. For the classes that are possible to handle with Chevie we used Chevie, for the rest we used Corollary \ref{g2help}. To describe this, we use the notation for conjugacy classes and characters of $T$ given in \cite{enomoto}. We partition the non-trivial irreducible characters into sets $\Delta_k,$ where the degree of the characters in $\Delta_k$ is a polynomial in $q$ of degree $k.$ We see from \cite{enomoto} that  $Irr(T)\setminus1_T=\Delta_4\cup \Delta_5\cup\Delta_6$ and we get $\vert\Delta_4\vert=1$, $\vert\Delta_5\vert=2q-7$ and $\vert\Delta_5\vert=q^2+13$. For $\chi\in \Delta_4$ we have $\chi(1)\geq q^4+q^2+1$, for $\chi\in \Delta_5$ we have $\chi(1)\geq \frac{1}{6}q(q-1)^2(q^2-q+1)$ and for $\chi\in \Delta_6$ we have $\chi(1)\geq q(q^2-q+1)(q^3-1)$. In Table \ref{tab2} we give upper bounds for $\vert\chi(x)\vert$ for all non-identity conjugacy classes $x.$ The first column lists the class representatives in the notation of \cite{enomoto}. The other three columns give upper bounds for $\vert \chi(x)\vert$ for $\chi\in \Delta_4$, $\Delta_5$ and $\Delta_6,$ respectively. 
\begin{table}[h!]
\centering
\begin{tabular}{ |c|c|c|c|} 
 \hline
  & $\Delta_4$&$\Delta_5$&$\Delta_6$ \\ 
 \hline
  $A_2$ & $q^2+1$&$(q+1)(q^2+1)$&$(q+1)(q^2+q+1)$ \\ 
 \hline
   $A_{31}$ & $q^2+1$&$(q+1)(q^2+1)$&$(q+1)(q^2+q+1)$ \\ 
 \hline
     $A_{41}$ & $1$&$\frac{1}{2}q(q+1)$&$2q+1$ \\ 
 \hline
 $A_{42}$ & $1$&$\frac{1}{2}q(q+1)$&$2q+1$ \\ 
 \hline
 $A_{51}$ & $1$&$\frac{4}{3}q$&$1$ \\ 
 \hline
  $A_{52}$ & $1$&$\frac{4}{3}q$&$1$ \\ 
 \hline
  $A_{53}$ & $1$&$\frac{4}{3}q$&$1$ \\ 
 \hline
  $B_{1}$ & $1+2q$&$q^2+3q+2$&$3(q+1)^2$ \\ 
 \hline
  $B_{2}$ & $1+q$&$2q+2$&$3(q+1)$ \\ 
 \hline
  $B_{3}$ & $1+q$&$2q+2$&$3(q+1)$ \\ 
 \hline
  $B_{4}$ & $1$&$2q+2$&$3$ \\ 
 \hline
   $B_{5}$ & $1$&$2q+2$&$3$ \\ 
 \hline
  $C_{11}$ & $2+q$&$2q+2$&$3(q+1)$ \\ 
 \hline
  $C_{12}$ & $2$&$2$&$3$ \\ 
 \hline
  $C_{21}$ & $2+q$&$2q+2$&$3(q+1)$ \\ 
 \hline
  $C_{22}$ & $2$&$2$&$3$ \\ 
 \hline
  $D_{11}$ & $2+q$&$2q+2$&$3(q+1)$ \\ 
 \hline
  $D_{12}$ & $2$&$2$&$3$ \\ 
 \hline
  $D_{21}$ & $2+q$&$2q+2$&$3(q+1)$ \\ 
 \hline
  $D_{22}$ & $2$&$2$&$3$ \\ 
 \hline
  $E_{1}(i,j)$ & $3$&$4$&$6$ \\ 
 \hline
  $E_{2}(i)$ & $1$&$4$&$4$ \\ 
 \hline
  $E_{3}(i)$ & $1$&$4$&$4$ \\ 
 \hline
  $E_{4}(i,j)$ & $3$&$4$&$6$ \\ 
 \hline
  $E_{5}(i)$ & $0$&$4$&$6$ \\ 
 \hline
 $E_{6}(i)$ & $0$&$4$&$6$ \\ 
 \hline
\end{tabular}
\caption{Bounds on character values}
\label{tab2}
\end{table}
We use these bounds to bound the sum in Corollary \ref{g2help} with $k=3.$ We find that this sum is less than $1$ for all pairs of conjugacy classes $(C,D)$ with the following exceptions; 

\noindent
\scalebox{0.81}{
\begin{tabular}{ccccccc|cccccccccc}
\\
    $C$ \arrvline&$A_2$\arrvline&$A_{31}$\arrvline& $A_{32}$\arrvline&$A_{41}$\arrvline & $A_{42}$\arrvline&$B_1$& $A_{51}$\arrvline&$A_{52}$\arrvline&$A_{52}$\arrvline&$B_{3}$\arrvline&$B_{2}$\arrvline&$C_{11}(i)$\arrvline&$C_{21}(i)$\arrvline&$D_{11}(i)$\arrvline&$D_{21}(i)$ \\
     \hline
   
    $D$\arrvline &   $D$& ranges &over&all&conj.&classes&D&is&$1$&or&$A_2$&or&$A_{32}$&&& \\ \\
\end{tabular}}.
For these exceptions we can show $D\subseteq C^3$ using Chevie and Lemma \ref{isit0}, with the exception of 
showing $E_2(i)\subseteq B_1^3$, $E_3(i)\subseteq B_1^3$, $1\subseteq C_{11}(i)^3$, $1\subseteq C_{21}(i)^3,$ $1\subseteq D_{11}(i)^3,$ and $1\subseteq D_{21}(i)^3.$ For these exceptions we obtained more precise bounds for the character values than those in Table \ref{tab2} and used Corollary \ref{g2help} again.
Hence we proved that $cn(T)=3.$ It follows by Theorem \ref{lowerboundrank}, $c(T)=c_i(T)=c_A(T)=3$ as well. 
\par
Finally we need to consider $F_4(2^n).$ We can use the argument given for $F_4(q)$, $q$ odd in the proof of Theorem \ref{lowerboundrank} to conclude that $c_i(T)\geq 4.$ We have not been able to determine whether the automorphism conjugacy width of $F_4(2^n)$ is $3$. \par 
\underline{\textbf{Case 3, Sporadic Groups}}\par
Zisser\cite{zisser}  and Karni\cite{karni} showed that the only sporadic simple groups with covering number 3 are the ones listed in Theorem \ref{mainthm}. Using character tables in GAP and Lemma \ref{isit0} we computed $c(T)$, $c_i(T)$ and $c_A(T)$ for all sporadic groups and obtained the same list.

\end{proof}

\section{Simple diagonal groups with small orbital diameter}

In this section we prove the following result, which classifies the primitive permutation groups of simple diagonal type with orbital diameter at most $4.$ Adopt the notation of the introduction: $T$ is simple, $k\geq 2$, $G=T^k.X\le D(k,T)$ with $X\leq Out(T)\times S_k$ and $G$ acts primitively on $\Omega=(G:D_A)$.
\begin{theorem}\label{classification}
Let $G$ be a primitive group of simple diagonal type of the form $T^k.X\leq D(k,T)$ . \begin{enumerate}
    \item If $orbdiam(G)=2$, then $k=2$ and $c_A(T)=2$.
    \item If $orbdiam(G)=3$, then $k=2$ and $c_A(T)\leq3$.
    \item If $orbdiam(G)=4$, then one of the following holds: \begin{enumerate}
        \item $k=2$ and $c_A(T)\leq4$
        \item  $k=3$ and $c_A(T)=2.$
    \end{enumerate}  
\end{enumerate}
\end{theorem}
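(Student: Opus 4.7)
My plan is to combine the lower bound of Theorem \ref{lowerbound} with the trivial inequality $c_X(T) \geq c_A(T) \geq 2$, valid for every non-abelian simple $T$. Writing
\[ \Lambda(k,c) := \begin{cases} \tfrac{k-1}{2}c + 1 & k \text{ odd},\\ \tfrac{k}{2}c & k \text{ even},\end{cases} \]
and choosing $t \in T$ with $c_X(T,t) = c_X(T)$, I have $orbdiam(G) \geq \Lambda(k, c_X(T))$. The inequality $\Lambda(k,c) \leq d$ subject to $c \geq 2$ has only finitely many solutions in each case: $(k,c) = (2,2)$ for $d=2$; $(k,c) \in \{(2,2),(2,3),(3,2)\}$ for $d=3$; and $(k,c) \in \{(2,2),(2,3),(2,4),(3,2),(3,3),(4,2)\}$ for $d=4$. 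Every $k=2$ entry gives $c_A(T) \leq c_X(T) = c$ immediately, which proves Parts (1), (2) and (3a). The remaining work is to eliminate $(3,2)$ for $d=3$, and $(3,3)$ and $(4,2)$ for $d=4$.

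The case $(3,3)$ is the cleanest: I sharpen Claim 2 of Theorem \ref{lowerbound} by summing the triangle inequalities $l_t^X(gh_i) + l_t^X(gh_j) \geq l_t^X(h_i^{-1}h_j)$ over all three pairs. Choosing $h_1, h_2, h_3$ pairwise at word-distance $3$ with respect to the generating set $t^{\pm X_0}$ (these exist because, when $c_X(T)=3$, the word ball of radius $2$ cannot equal $T$ up to a single left-translate) yields $2\min_g \sum_i l_t^X(gh_i) \geq 9$, whence $\min_g \sum_i l_t^X(gh_i) \geq 5$. By Claim 1 of Theorem \ref{lowerbound}, $d_{\Gamma_0^t}(D_A, D_A(h_1,h_2,h_3)) \geq 5$, contradicting $orbdiam(G) = 4$.

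The cases $(3,2)$ and $(4,2)$ are the main obstacle: the pure triangle bound saturates at $3$ and $4$ respectively, exactly matching the orbital diameter I am trying to rule out. I would proceed by invoking Theorem \ref{aut2}, which forces $T \cong J_1$ or $T \cong PSL_2(q)$ with $q \equiv 1 \pmod 4$ or $q = 2^{2m}$, and then analysing the only coordinate distribution realising the triangle bound exactly, namely the all-ones distribution $(1,\ldots,1)$. This distribution requires a common $g$ with $gh_i \in t^{\pm X_0}$ for every $i$, i.e.\ the set $\{h_1,\ldots,h_k\}$ fits inside a single left-translate of $t^{\pm X_0}$. By varying the $h_i$'s on the sphere of word-radius $2$ and using the structure of the small classes $t^{\pm X_0}$ in $J_1$ and $PSL_2(q)$, I select $h_i$'s for which no such common $g$ exists, hence $\min_g \sum_i l_t^X(gh_i) \geq d+1$, contradicting $orbdiam(G) = d$. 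The delicate step is this final counting argument, which must be performed case-by-case in the short list of simple groups with $c_A(T)=2$.
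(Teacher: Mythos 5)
Your overall framework is exactly the paper's: use Theorem \ref{lowerbound} to enumerate the candidate pairs $(k,c)$ with $\Lambda(k,c)\le d$, note that the $k=2$ entries give the conclusions directly, and then eliminate $(3,2)$ for $d=3$ and $(3,3)$, $(4,2)$ for $d=4$. The enumeration is correct. The two elimination steps are where the real work lies, and both are underproved in the proposal.

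For the $(3,3)$ case your triangle-inequality computation is precisely the paper's Lemma \ref{k3c3}: if $h_1,h_2,h_3$ are pairwise at $l_t^X$-distance $3$ then $\min_g\sum_i l_t^X(gh_i)\ge 5$, contradicting diameter $4$. But the existence of such a triple is a genuine theorem, not a one-line observation. Your justification --- ``the word ball of radius $2$ cannot equal $T$ up to a single left-translate'' --- is not a proof: you need two length-$3$ elements $x,y$ with $l_t^X(x^{-1}y)=3$, and nothing about $c_X(T)=3$ alone gives this. The paper's Lemma \ref{c3elts} establishes exactly this existence, and its proof is nontrivial: it first invokes the classification Theorem \ref{mainthm} to split into ``$cn(T)=3$'' and ``$T\cong F_4(2^a)$''. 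In the first case the ball-product argument $T=D^3\subseteq (C^2\cup C\cup\{1\})D$ and the fact that $1\notin (C^2\cup C\cup\{1\})D$ when $l_g^X(t)=3$ yields a contradiction; in the $F_4(2^a)$ case one has to produce three pairwise non-real elements explicitly. So this step of your proposal contains a real gap.

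For the $(3,2)$ and $(4,2)$ cases you correctly identify that one must show $orbdiam(G)\geq k+1$ when $c_A(T)=2$ and $k\geq 3$, and that this forces a finite list of $T$ via Theorem \ref{aut2} (the paper's Lemma \ref{strictk} does the same). However, two issues remain. First, the claim that the all-ones coordinate distribution is the \emph{only} one attaining the triangle bound needs justification for general $k$: with the paper's tuples one also has to dispose of distributions containing a coordinate equal to $0$ (forcing $g\in\{1,x^{-1},y^{-1},z^{-1}\}$), which the paper does explicitly and which your sketch elides. Second, you defer the actual construction --- ``select $h_i$'s for which no such common $g$ exists'' --- to unspecified case-by-case counting. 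That counting is the substance of Lemma \ref{strictk}: for $PSL_2(q)$, $q\geq 8$, explicit diagonal and antidiagonal matrices $x,y,z$ are produced satisfying $l_t^A(x)=l_t^A(y)=l_t^A(x^{-1}y)=l_t^A(t^ax)=2$ for all $a$, plus conditions on $z$, and for $J_1$ and $PSL_2(4)$ this is checked in GAP. Until those elements are exhibited and verified, the proposal is only a plan for a proof of these cases, not a proof.
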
 

\noindent \textbf{Remark}
Note that Lemma \ref{t2s2} is a partial converse of this result, as it shows that there are some families of groups of simple diagonal type with $k=2$ such that $orbdiam(G)=c_A(T)$, namely $G=D(2,T)$.

For the proof of this theorem need some preliminary lemmas.\par 

Note that it follows from Theorem \ref{lowerbound} that $k$ is a lower bound for the orbital diameter. In fact, for $c_A(T)=2$ it is a strict lower bound;

\begin{lemma}\label{strictk}
Let $c_A(T)=2$, $k\geq 3$ and $G=T^k.X\leq D(k,T).$ Then $orbdiam(G)\geq k+1.$
\end{lemma}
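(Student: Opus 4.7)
The argument splits according to the value of $c_X(T)$. If $c_X(T) \geq 3$, Theorem~\ref{lowerbound} already suffices: for $k$ odd one has $M = \tfrac{(k-1)c_X(T)}{2}+1 \geq \tfrac{3(k-1)}{2}+1 \geq k+1$, and for $k$ even $M = \tfrac{kc_X(T)}{2} \geq \tfrac{3k}{2} \geq k+1$ (both use $k\geq 3$). So the essential case is $c_X(T) = 2$, in which the Cayley graph $\Gamma_T$ of $T$ with generating set $S = t^{\pm X_0}$ has diameter exactly $2$, and the averaging argument in the proof of Theorem~\ref{lowerbound} tops out at $M = k$. Thus a genuine \emph{+1 improvement} is needed.

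For a suitable $t \in T\setminus\{1\}$ I plan to exhibit a coset $D_A(h_1,\dots,h_k)$ at distance at least $k+1$ from $D_A$ in $\Gamma_0^t$, by choosing the tuple $(h_i)$ so that
\begin{itemize}
\item[(A)] $l_t^X(h_i^{-1} h_j) = 2$ for all $i \neq j$, and
\item[(B)] $\bigcap_{i=1}^{k} S h_i^{-1} = \emptyset$.
\end{itemize}
Given such a tuple, Claim~1 of Theorem~\ref{lowerbound} yields $d_{\Gamma_0^t}(D_A, D_A(h_1,\dots,h_k)) \geq \min_{g \in T} \sum_i l_t^X(g h_i)$, and this minimum is at least $k+1$: if $g = h_j^{-1}$ for some $j$ then (A) forces the sum to equal $0 + 2(k-1) \geq k+1$ (since $k \geq 3$), while otherwise every $l_t^X(g h_i) \geq 1$, so the sum is at least $k$, with equality exactly when every $l_t^X(g h_i) = 1$, i.e.\ $g \in \bigcap_i S h_i^{-1}$, which is impossible by (B).

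The existence of such $(h_i)$ then has to be checked for the three families of $T$ given by Theorem~\ref{aut2}. For $T=PSL_2(q)$ with $q\equiv 1\pmod 4$ I would take $t$ a root element (so $S$ consists of order-$p$ unipotent elements) and the $h_i$'s from a cyclic torus of order coprime to $p$, or, for $k=3$, from a Klein four subgroup of the dihedral Sylow~$2$; condition (A) then holds for free, while (B) can be verified through the character sum in Lemma~\ref{isit0}. The case $T=PSL_2(2^{2m})$ is analogous with $t$ semisimple of order dividing $q\pm 1$, and $T=J_1$ is handled by an explicit \textsf{GAP} computation, e.g.\ inside the normaliser $19{:}6$ of an element of order $19$. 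The main obstacle is this existence step: the averaging argument of Theorem~\ref{lowerbound} stops at $k$, so obtaining $k+1$ truly depends on (B), whose verification is delicate for small $k$ and small $|T|$ and rests on detailed conjugacy-class data of $T$; for large $k$ a counting estimate on the number of $T^k$-tuples $(u_1,\dots,u_k)$ with $\sum_i l_t^X(u_i) \leq k$ can supplant the explicit construction.
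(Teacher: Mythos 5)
Your framework is sound: with a tuple $(h_1,\dots,h_k)$ satisfying (A) and (B), the argument that $\min_{g\in T}\sum_i l_t^X(gh_i)\ge k+1$ is correct, and by Claim~1 of Theorem~\ref{lowerbound} this gives a coset at distance at least $k+1$. The reduction to $c_X(T)=2$ via the first paragraph is also fine. However, the argument has a genuine gap precisely where you flag one: the existence of the tuple is never established, and the construction you sketch cannot work in general. Condition~(A) forces the $h_i$ to be pairwise distinct, so you need $k$ \emph{distinct} elements satisfying both conditions, and the supply runs out: the split/nonsplit tori of $PSL_2(q)$ have order $(q\pm 1)/2$, so for $T=PSL_2(5)\cong A_5$ (or $T=J_1$, or any fixed $T$) and $k$ large the torus simply does not contain $k$ elements, yet the lemma must hold for every $k\ge 3$. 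The ``counting estimate'' you mention as a substitute for large $k$ is not carried out, and it is not obvious it would close the gap, since the number of cosets reachable in $k$ steps grows with $k$ while $|\Omega|$ is fixed, so a crude volume count will not isolate a far coset. Thus the proposal proves the statement only conditionally on an unestablished combinatorial existence claim, which is exactly the heart of the lemma.

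The paper's proof avoids the $k$-dependence entirely: it takes $t$ an involution and fixes \emph{three or four} elements $x,y,z\in T$ (explicit $2\times 2$ matrices for $PSL_2(q)$, verified in GAP for $J_1$ and $PSL_2(4)$) with length conditions of the shape $l_t^A(x)=l_t^A(y)=l_t^A(x^{-1}y)=2$ and $l_t^A(t^a x)=2$ for all $a\in T$, then places them in a tuple $(y,1,x,1,x,\dots,1,x)$ with repeated entries. Because the conditions involve only these few elements, they are uniform in $k$, and the case analysis on the possible minimizers $g\in\{1,x^{-1},y^{-1}\}$ versus ``all lengths equal $1$'' gives the $+1$ improvement for every $k\ge 3$ at once. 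If you want to salvage your approach, the lesson is to design a tuple with boundedly many distinct entries (as the paper does) rather than $k$ distinct ones, so the existence question does not degenerate for small $T$ or large $k$.
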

\begin{proof}
Recall our definition of the length of an element of $T$ with respect to $t\in T\setminus1$: for $g \in T,$ $$l_t^A(g)=\min \{a:g=t^{\pm \alpha_1}\dots t^{\pm \alpha_a},\alpha_i\in Aut(T)\}.$$
Since $c_A(T)=2$, we have $l_t^A(g)\leq 2$ and $l_t^A(g)=1$ if and only if $g\in t^{\pm Aut(T)}.$ Also by Proposition \ref{aut2}, $T=J_1$ or $PSL_2(q)$ with $q\equiv 1 \,(mod\, 4)$ or $q= 2^{2m}.$\par 
Let $t$ be an involution in $T$ and define $\Gamma_0=\{D_A,D_A(1^{k-1},t)\}^G$.  \par 
\underline{\textbf{Claim 1}} We can choose $x,y,z\in T$ such that $$l_t^A(x)=l_t^A(y)=l_t^A(x^{-1}y)=l_t^A(t^ax)=2 \,\,\, \forall a\in T$$ and $$l_t^A(z)=l_t^A(x^{-1}z)=l_t^A(y^{-1}z)=2.$$ \par
\underline{Proof of Claim 1}
For $T=J_1$ and $PSL_2(4)$ this can be verified in GAP. Now let $T=PSL_2(q)$, with $q\geq 8$ and $q \equiv 1 \,(mod\, 4)$ or $q= 2^{2m}.$ Let $Z=Z(SL_2(q)).$ Define $x,y,z\in T$ to be 
$\begin{pmatrix}
\omega & 0 \\
0 & \omega^{-1}
\end{pmatrix}Z$,
$\begin{pmatrix}
\omega^2 & 0 \\
0 & \omega^{-2}
\end{pmatrix}Z$ and $\begin{pmatrix}
0 & 1 \\
1 & \omega
\end{pmatrix}Z,$ respectively, where $\omega\in \mathbb{F}_q^\star$ has order $q-1$.
 To see that these elements satisfy Claim 1 consider the product of an involution in $PSL_2(q)$ with $x$ or $y.$ An involution in $PSL_2(q)$ is of the form $\begin{pmatrix}
a & b \\
c & -a 
\end{pmatrix}Z,$ and  $$\begin{pmatrix}
a & b \\
c & -a 
\end{pmatrix}\begin{pmatrix}
\omega^i & 0 \\
0 & \omega^{-i}
\end{pmatrix}Z=\begin{pmatrix}
a\omega^i & b \\
c & -a\omega^{-i}
\end{pmatrix}Z,$$ which is not an involution. The other assertions in Claim 1 are easily verified.  
 \par 
 
\underline{\textbf{Claim 2}} There is a coset $D_A(m_1,\dots,m_k)$ such that $d_{\Gamma_0}(D_A,D_A(m_1,\dots,m_k))\geq k+1.$\par
\underline{{Proof of Claim 2}} 
Assume first that $k$ is odd and let and let $x,y,z\in T$ be as in Claim 1. Set $$(m_1,\dots,m_k)=(y,1,x,1,x,1,\dots,1,x).$$ Suppose that $d_{\Gamma_0}(D_A,D_A(m_1,\dots,m_k))\leq k.$ Then by Claim 1 in the proof of Theorem \ref{lowerbound}, there exists $g\in T$ such that
\begin{equation}\label{starequation5}
\sum_{i=1}^k l_t^A(gm_i)\leq k. 
\end{equation} 
By Claim 1, $$2k=\sum_{i=1}^{k-1}l_t^A(m_i^{-1}m_{i+1})+l_t^A(m_1^{-1}m_k)$$ and recall that $$l_t^A(m_{i}^{-1}m_{i+1})\leq l_t^A(gm_{i+1})+l_t^A(gm_{i}).$$ Putting these together we get 
$$2k=\sum_{i=1}^{k-1}l_t^A(m_i^{-1}m_{i+1})+l_t^A(m_1^{-1}m_k)\leq 2\sum_{i=1}^k l_t^A(gm_i)=2\left( l_t^A(gy)+\frac{k-1}{2}l_t^A(gx)+\frac{k-1}{2}l_t^A(g)\right)\leq 2k, $$ where the last inequality follows from (\ref{starequation5}). This tells us that $\sum_{i=1}^k l_t^A(gm_i)=k.$  If $l_t^A(gm_i)=1$ for all $i$, then $l_t^A(g)=l_t^A(gx)=l_t^A(gy)=1$ which is a contradiction by Claim 1. Hence there exist $j$ such that $l_t^A(gm_j)=0$ and so $g=1,x^{-1}$ or $y^{-1}.$
If $g=x^{-1}$ then $\frac{k-1}{2}l_t^A(g)=k-1$ and $l_t^A(x^{-1}y)=2$, so $\sum_{i=1}^k l_t^A(gm_i)\geq k+1.$ If $g=y^{-1}$ then $\frac{k-1}{2}l_t^A(g)=k-1$ and $\frac{k-1}{2}l_t^A(y^{-1}x)=k-1$, so  $\sum_{i=1}^k l_t^A(gm_i)\geq 2k-2\geq k+1.$  And if $g=1$ then $\frac{k-1}{2}l_t^A(x)=k-1$ and $l_t^A(y)=2$, so  $\sum_{i=1}^k l_t^A(gm_i) \geq k+1.$ These contradictions prove Claim 2 for the case where $k$ is odd. \par
Now assume $k$ is even. In this case let $(m_1,\dots,m_k)=(y,z,x,1,x,1,\dots,1)$. Suppose that $d_{\Gamma_0}(D_A,D_A(m_1,\dots,m_k))\leq k$. As before, we deduce that there exists $g\in T$ such that $\sum_{i=1}^k l_t^X(gm_i)=k$ and we reach a contradiction in the same way. These contradictions establish Claim 2 and so the orbital diameter of $G$ is bounded below by $k+1.$ 

\end{proof}

Now we include another result regarding simple groups with conjugacy width 3. 
\begin{lemma}\label{c3elts}
Let $T$ be a simple group and let $X$ be a group such that $Inn T\leq X\leq Aut(T)$ and $c_X(T)=3.$ Then there exist $g,x,y\in T$ such that $c_X(T,g)=3$ and $l_g^X(x)=l_g^X(y)=l_g^X(x^{-1}y)=3.$
\end{lemma}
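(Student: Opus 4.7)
The plan is to start by choosing $g \in T$ with $c_X(T,g) = 3$, which exists by the hypothesis $c_X(T)=3$. Set $S = g^{\pm X}$ and define $L_i = \{h \in T : l_g^X(h) = i\}$ for $i \geq 0$; then $L_3 = T \setminus (\{1\} \cup S \cup S^2)$ is nonempty, closed under inversion, and closed under $X$-conjugation (both follow because $S$ itself is symmetric and $X$-invariant). The goal reduces to exhibiting $x, y \in L_3$ with $x^{-1}y \in L_3$.

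The key reduction I would exploit is the observation that any $z \in L_3$ with $z^2 \in L_3$ already suffices: put $x = z$ and $y = z^{-1}$; then $x, y \in L_3$ by inverse-closure, and $x^{-1}y = z^{-2} = (z^2)^{-1} \in L_3$. In particular, any element of order $3$ lying in $L_3$ does the job since then $z^2 = z^{-1}$. So the first step is to exhibit, for some admissible $g$, such a $z$---ideally one of order $3$, using the freedom to vary $g$ over the nonempty set of elements with $c_X(T,g)=3$.

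To handle cases where no obvious candidate $z$ is available, I would argue by contradiction: if for every admissible $g$ no element $z \in L_3$ satisfies $z^2 \in L_3$, then $L_3 \cdot L_3 \cap L_3 = \emptyset$, i.e.\ $L_3 \cdot L_3 \subseteq \{1\} \cup S \cup S^2$. Writing $L_3$ as a union of $X$-conjugacy classes and invoking Lemma \ref{isit0}, the number of representations of a fixed $u \in L_3$ as $xy$ with $x, y$ ranging over pairs of classes in $L_3$ is a sum of character ratios; one expects this count to be strictly positive via estimates of the type used in Corollary \ref{g2help}, yielding the desired contradiction.

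The main obstacle will be making this character-sum estimate work uniformly, since the contributing classes and values depend on $T$ and $X$. As a fallback, I would case-split across the simple groups with $c_X(T) = 3$, read off from Theorem \ref{mainthm}, and verify the three-length condition by exhibiting a concrete $g$ in each family (e.g.\ a $5$-cycle in $A_5$, a long-root element in groups of Lie type, or explicit class representatives in the sporadic groups), in the spirit of Claim 1 in the proof of Lemma \ref{strictk}.
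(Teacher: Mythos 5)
Your setup is on the right track: choosing $g$ with $c_X(T,g)=3$, noting $L_3$ is nonempty, symmetric, and $X$-invariant, and reducing to $L_3\cdot L_3\cap L_3\neq\emptyset$ is exactly the right framing. But the argument as written has two genuine gaps. First, the logical step in your contradiction is reversed: ``no $z\in L_3$ with $z^2\in L_3$'' does \emph{not} imply $L_3\cdot L_3\cap L_3=\emptyset$; the $z^2$ observation is a sufficient special case, not the general condition, so you cannot pass from its negation to the stronger hypothesis $L_3\cdot L_3\subseteq \{1\}\cup S\cup S^2$. Second, the character-sum estimate you invoke is precisely the step you never carry out, and you correctly identify that it would require nonuniform, case-by-case work; the fallback case split across all families in Theorem \ref{mainthm} is likewise unfinished. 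As it stands, nothing is actually proved.

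The paper's argument avoids both problems with a short counting trick. Under the hypothesis $cn(T)=3$ (which, by Theorem \ref{mainthm}, covers every $T$ with $c_X(T)=3$ except $F_4(2^a)$), pick any $t$ with $l_g^X(t)=3$ and let $D=t^{\pm X}\subseteq L_3$, $E=\{1\}\cup C\cup C^2$ where $C=g^{\pm X}$. If $D^2\subseteq E$, then $D^3\subseteq ED$. But $D^3=T$ because $D$ contains a full conjugacy class and $cn(T)=3$, while $1\notin ED$ since every element of $D$ has length $3$ and every element of $E$ has length $\leq 2$. This contradiction shows $D^2\not\subseteq E$, and any $d_1d_2\notin E$ yields $x=d_1^{-1}$, $y=d_2$ as required --- no character theory needed. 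The remaining case $T\cong F_4(2^a)$ has $cn(T)>3$, so this argument does not apply; there the paper takes $g$ to be an involution (so that any product of at most two $X$-conjugates of $g^{\pm1}$ is real) and produces explicit non-real $x,y$ with $x^{-1}y$ also non-real, forcing all three to have length $3$. Your proposal does not separate out this case, and it would need to.
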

\begin{proof} By Theorem \ref{mainthm} either $cn(T)=3$ or $T\cong F_4(2^a).$ Assume first that $cn(T)=3$ and choose any $g\in T$ such that $c_X(T,g)=3.$ Let $C:=g^{\pm X}$ and let $t\in T$ be such that $l_g^X(t)=3$ and let $D=t^{\pm X}.$  \par
First suppose that $cn(T)=3.$
Assume $D^2\subseteq C^2\cup C \cup \{1\}=E$. Then $$T=D^3\subseteq ED\subseteq T$$ and so  $$ED=T=\{g^{\pm a}g^{\pm b}t^{\pm c},g^{\pm b}t^{\pm c},t^{\pm c}\,\,\vert\,\, a,b,c\in X\}.$$ 
However since $l_g^X(t)=3$ we have $1\not \in ED,$ a contradiction. Hence $D^2\not \subseteq  C^2\cup C \cup \{1\}$ and so there exist $x,y\in D$ satisfying the statement of the lemma. \par
Finally assume $T=F_4(2^a).$ Let $g\in T$ be an involution. As $F_4(2^a)$ is not strongly real, we have that $c_X(T,g)\geq 3.$ Recall that an element is real if it is conjugate to its inverse. We shall show the existence of elements $x,y\in T$ such that none of $x,y,x^{-1}y$ is real. The proof of \cite[Cor 4.5]{realsimplegroups} shows the existence of a non-real order 12 element in $F_4(2^a).$ By  construction, this has a conjugate in $F_4(2).$ From the character table in GAP, $F_4(2)$ has precisely two non-real classes, $C,C^{-1}$ of elements of order $12.$ A computation shows that we can find $x,y\in C$ such that $x^{-1}y\in C.$ Hence by the previous observation, $x,y,x^{-1}y$ are also non-real in $T=F_4(2^a).$
\end{proof}

\begin{lemma}\label{k3c3}
Let $G=T^3.X\leq D(3,T)$ with $c_X(T)=3.$ Choose $g,x,y\in T$ as in the statement of Lemma \ref{c3elts} and let $\Gamma_0=\{D_A,D_A(1,1,g)\}^{G}.$ Then $diam(\Gamma_0)>4.$
\end{lemma}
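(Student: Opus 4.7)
The plan is to mimic the strategy of Theorem \ref{lowerbound} but exploit the stronger hypothesis from Lemma \ref{c3elts}: namely that we have three pairwise far-apart elements $x,y,1$ (each pair at $l_g^X$-distance $3$), rather than just one pair as in the general case. With $k=3$ and $c_X(T,g)=3$, Theorem \ref{lowerbound} gives only $\mathrm{diam}(\Gamma_0)\geq \tfrac12(k-1)c+1=4$; we need to squeeze out one more.

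First I would recall that by Claim~1 in the proof of Theorem \ref{lowerbound}, every coset at distance $m$ from $D_A$ in $\Gamma_0$ has the form $D_A(t_1,t_2,t_3)$ with $\sum_{i=1}^3 l_g^X(t_i)\leq m$. So it suffices to exhibit a single triple $(h_1,h_2,h_3)$ such that
\begin{equation*}
\min_{g_0\in T}\sum_{i=1}^3 l_g^X(g_0 h_i) \;\geq\; 5.
\end{equation*}
The natural candidate, using the elements provided by Lemma \ref{c3elts}, is $(h_1,h_2,h_3)=(1,x,y)$. Then all three pairwise products satisfy
\begin{equation*}
l_g^X(h_1^{-1}h_2)=l_g^X(x)=3,\quad l_g^X(h_1^{-1}h_3)=l_g^X(y)=3,\quad l_g^X(h_2^{-1}h_3)=l_g^X(x^{-1}y)=3.
\end{equation*}

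Next I would verify the triangle inequality $l_g^X(h_i^{-1}h_j)\leq l_g^X(g_0 h_i)+l_g^X(g_0 h_j)$, which follows immediately because $l_g^X$ is subadditive (concatenation of expressions) and invariant under inversion (since $X_0$ is closed under the $\pm$ sign). Summing the three triangle inequalities above gives
\begin{equation*}
9 \;=\; l_g^X(x)+l_g^X(y)+l_g^X(x^{-1}y)\;\leq\; 2\sum_{i=1}^3 l_g^X(g_0 h_i),
\end{equation*}
so $\sum_{i=1}^3 l_g^X(g_0 h_i)\geq 9/2$, and since this is an integer it is at least $5$. Therefore $D_A(1,x,y)$ sits at distance $\geq 5$ from $D_A$ in $\Gamma_0$, and $\mathrm{diam}(\Gamma_0)>4$.

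I do not foresee a serious obstacle: the combinatorial extraction in Theorem \ref{lowerbound} already does all the heavy lifting, and the only new input is the existence of the triple $(g,x,y)$ with all three of $x,y,x^{-1}y$ of maximal $X$-length, which is exactly what Lemma \ref{c3elts} supplies. The mild subtlety is just to confirm that the $\pm$-closure in the definition of $l_g^X$ ensures $l_g^X(u^{-1})=l_g^X(u)$, so that the triangle inequality applies symmetrically to the pairwise differences $h_i^{-1}h_j$; this is routine from the definition.
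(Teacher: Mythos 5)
Your proposal is correct and takes essentially the same approach as the paper: the paper also invokes Claim 1 from the proof of Theorem \ref{lowerbound}, picks the coset $D_A(x,1,y)$ (your $(1,x,y)$ is the same up to a permutation), and sums the three triangle inequalities to get the bound $\tfrac{1}{2}(3+3+3)=\tfrac{9}{2}>4$. Your extra care in spelling out why Claim 1 applies and why $l_g^X$ is inversion-invariant is sound and matches what the paper implicitly relies on.
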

\begin{proof}
Let $x,y\in T$ be as in Lemma \ref{c3elts}. The for all $ t\in T$ we have  $$l_g^X(tx)+l_g^X(t)+l_g^X(ty)\geq \frac{l_g^X(x)+l_g^X(y)+l_g^X(x^{-1}y)}{2}=\frac{9}{2}>4. $$ Hence $d_{\Gamma_0}(D_A,D_A(x,1,y))>4.$  The conclusion follows. 
\end{proof}

\begin{proof}[Proof of Theorem \ref{classification}]
Let $T$ be simple and $G=T^k.X\leq D(k,T).$
\begin{enumerate}
    \item If $orbdiam(G)=2$ then by Theorem \ref{lowerbound} it follows that $k=2$ and $c_A(T)=2.$
    \item Suppose $orbdiam(G)=3$. Then $k\leq 3$ by Theorem \ref{lowerbound}. If $k=2$ then Theorem \ref{lowerbound} shows $c_A(T)\leq3$ as required. If $k=3$, then by Theorem \ref{lowerbound}, $c_A(T)=2$, but then Lemma \ref{strictk} implies that $orbdiam(G)>3$, a contradiction. 
    \item Assume that $orbdiam(G)=4.$ By Theorem \ref{lowerbound} one of the following occurs: 
    \renewcommand{\labelenumii}{\Roman{enumii}}
    \begin{enumerate}
        \item $k=2$ and $c_A(T)\leq4$
        \item $k=3$ and $c_A(T)=2$
        \item $k=3$ and $c_A(T)=3$
        \item $k=4$ and $c_A(T)=2.$
    \end{enumerate}
    In Cases I and II, conclusions $(a)$ and $(b)$ of Theorem \ref{classification} hold. \par
    In Cases III and IV, Lemmas \ref{strictk} and \ref{k3c3} imply that $orbdiam(G)\geq 5$, which is a contradiction.  \par
\end{enumerate}
\end{proof}

\section{Upper bound on the orbital diameter}
In this section we obtain a general upper bound for the orbital diameter of a primitive simple diagonal group. To keep things as simple as possible we restrict attention to groups of the form $G=T^k.S_k\leq D(k,T).$

\begin{theorem}\label{uppersk}
Let $T$ be a simple group and let $G=T^k.S_k\leq D(k,T)$ be a primitive simple diagonal group. Then $$orbdiam(G)\leq 24(k-1)c_i(T)^2.$$
\end{theorem}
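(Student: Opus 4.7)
The plan is to bound $diam(\Gamma)$ for an arbitrary orbital graph $\Gamma$ of $G=T^k.S_k$ by comparing $\Gamma$ with the special orbital $\Gamma_0^t$ already bounded in Lemma \ref{lemma0}. Note first that when $X=S_k$ the subgroup $X_0$ equals $T$, so $c_X(T)=c_i(T)$. Write the orbital of $\Gamma$ as $\{D_A,D_An\}^G$ with $n=(1,n_2,\dots,n_k)\in T^k$, and assume without loss of generality that $n_1=1$ and that some $n_r\ne 1$. An explicit computation, analogous to the one in Claim 1 of Theorem \ref{lowerbound}, shows that the neighbours of an arbitrary coset $D_Ax$ in $\Gamma$ have the form $D_A(n_i^{\epsilon}\,a\,x_{\sigma^{-1}(i)})_{i=1}^k$ for some $a\in T$, $\sigma\in S_k$ and $\epsilon\in\{\pm 1\}$. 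This yields a Cayley-like ``move'' structure on $\Gamma$, with the caveat that all $k$ coordinates are updated simultaneously by a single conjugator $a$, a single permutation $\sigma$ and a single sign $\epsilon$.

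The key intermediate step is a \emph{Reduction Lemma}: from $D_A$ there is a path in $\Gamma$ of length at most $12\,c_i(T)$ ending at a coset of the form $D_A(1,\dots,1,t,1,\dots,1)$ for some $t\in T\setminus\{1\}$. To prove this I would first take two steps $D_A\to D_An\to D_An\cdot m$, where $m$ is chosen using $\sigma=(r,r')$, $a=1$ and $\epsilon=-1$; this lands at $D_A(1,\dots,n_r^{-1}n_{r'},\dots,n_{r'}^{-1}n_r,\dots,1)$, a coset with only two non-trivial coordinates (a ``commutator cancellation''). I would then iterate such 2-step sub-paths, choosing successive transpositions and conjugators so that the second non-trivial coordinate telescopes to the identity while the first accumulates a product of conjugates of $t^{\pm 1}$. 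Since any element of $T$ can be written as such a product of length at most $c_i(T)$ by the definition of the inverse conjugacy width, the total cost of the reduction is bounded by roughly $12\,c_i(T)$ moves in $\Gamma$.

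Once the Reduction Lemma is in hand, the conclusion is immediate. By vertex-transitivity of $\Gamma$ (and by translating the Reduction Lemma path via the $G$-action as in Claim 2 of Lemma \ref{lemma0}), the same bound applies to reaching $D_Ah\cdot(1,\dots,1,t^\alpha,1,\dots,1)$ from an arbitrary coset $D_Ah$. Therefore every edge of $\Gamma_0^t$ can be simulated by a path in $\Gamma$ of length at most $12\,c_i(T)$. By Lemma \ref{lemma0} we have $diam(\Gamma_0^t)\le(k-1)c_i(T)$, so stringing together simulations yields
$$diam(\Gamma)\le 12(k-1)c_i(T)^2.$$
An additional factor of $2$, coming from the need to handle both signs $\pm$ appearing in the definition of $c_i(T)$, pushes the constant up to $24$ and gives the claimed bound.

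The main obstacle will be the Reduction Lemma, specifically the book-keeping of the telescoping argument. The coupling across coordinates (one shared $a$, one shared $\sigma$, one shared $\epsilon$ per step) means cancellations in one coordinate can inadvertently disturb others, so the scheme must simultaneously track how the desired product accumulates in the target coordinate and how it cancels in every other coordinate. Simplicity of $T$ via $c_i(T)$-expressions is used to realise the target $t$, but care is needed to ensure that the conjugators used across the $c_i(T)$ sub-paths combine without blowing up the total length; this is precisely where the constant $12$ arises.
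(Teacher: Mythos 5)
Your high-level strategy coincides with the paper's: reduce the general orbital $\Gamma$ to the special orbital $\Gamma_0^{t'}$ by exhibiting a short path in $\Gamma$ from $D_A$ to a coset of the form $D_A(1^{k-1},t')$, and then amplify by Lemma~\ref{lemma0}. You also correctly identify the two-step move landing at a coset with only two non-trivial (and ``opposite'') coordinates, which is precisely the paper's Claim~1. So the overall architecture is the same.

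However, your \emph{Reduction Lemma} --- the actual content of the proof --- is only sketched, and the sketch as written does not confront the obstruction that makes this step non-trivial. The difficulty is precisely the coupling you name: every move multiplies \emph{both} remaining non-trivial coordinates by $T$-conjugates of the same element with opposite signs, so after $m$ two-step moves you sit at a coset of the form $D_A\bigl(1^{k-2},\,t^{-a_1}\cdots t^{-a_m},\,t^{a_1}\cdots t^{a_m}\bigr)$ (up to conjugation). Telescoping one coordinate to $1$ requires $t^{a_1}\cdots t^{a_m}=1$, and you need the \emph{other} coordinate $t^{-a_1}\cdots t^{-a_m}$ to be non-trivial at the same time. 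Note that $t^{-a_1}\cdots t^{-a_m}$ is \emph{not} the inverse of $t^{a_1}\cdots t^{a_m}$ (the order of factors is not reversed), so there is no automatic contradiction --- but there is also no automatic reason that a product of bounded length with one value $1$ and the other $\ne 1$ must exist. Your sketch simply asserts this works, and you flag it yourself as ``the main obstacle.'' The paper resolves exactly this by (i) choosing the first coordinate to be an \emph{involution} $u$, (ii) splitting into the case where $C_T(u)\ne C_T(u')$ (handled by a short product of conjugates) and the case where $u'$ is also an involution with the same centraliser, and (iii) in the latter case invoking Lemma~\ref{twoinvolutions} to produce an element $h=ww^x$ of order $>2$, then using the permutation action to decouple the two non-trivial coordinates by routing through a third coordinate. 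None of these ingredients appear in your outline, and I do not see how ``choosing successive transpositions and conjugators'' alone gets around the case where the two coordinates stubbornly vanish together.

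Two smaller points. First, your constant bookkeeping is off: you quote $12c_i(T)$ for the reduction and then ``an additional factor of 2 for the signs,'' but the $\pm$ signs are already built into $c_i(T)$, so this factor has no source; the paper's $24c$ arises instead from the concatenations $2c\to4c\to8c\to16c\to24c$ in its Claim~2. Second, the case $k=2$ needs to be set aside and handled by Lemma~\ref{t2s2} (or one must note that then $\Gamma=\Gamma_0^t$ automatically); the quadratic bound is not the point there.

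In short: same approach as the paper, but the heart of the argument --- getting from a two-non-trivial-coordinate coset to a one-non-trivial-coordinate coset in $O(c_i(T))$ steps --- is asserted rather than proved, and the sketch as given would run into the degenerate case that the paper's involution/centraliser argument is specifically designed to escape.
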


We first need a preliminary lemma.

\begin{lemma}\label{twoinvolutions}
Let $T$ be an simple group and $u\in T$ an involution. Then there exists $x\in T$ such that $uu^x=[u,x]$ has order greater than 2. 
\end{lemma}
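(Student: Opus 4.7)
The plan is to argue by contradiction. Suppose, aiming for a contradiction, that for every $x \in T$ the element $uu^x$ has order at most $2$.

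First I would reduce the hypothesis to a statement about commutation. Since $u$ is an involution, so is every conjugate $u^x$. If $uu^x = 1$, then $u^x = u^{-1} = u$, so trivially $u$ and $u^x$ commute. Otherwise $(uu^x)^2 = 1$, which rearranges to
\[
uu^x = (uu^x)^{-1} = (u^x)^{-1}u^{-1} = u^x u,
\]
again yielding $[u,u^x]=1$. Hence under the contradiction hypothesis, $u$ commutes with every element of the conjugacy class $u^T$.

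Next I would exploit simplicity. The subgroup $N := \langle u^T \rangle$ is normal in $T$, and $N \neq 1$ because $u \in N$ and $u \neq 1$. Since $T$ is a (non-abelian) simple group, $N = T$. But every generator of $N$ lies in $C_T(u)$, so $T = N \leq C_T(u)$, forcing $u \in Z(T)$. As $T$ is non-abelian simple, $Z(T) = 1$, contradicting $u \neq 1$. Therefore some $x \in T$ must satisfy $\mathrm{ord}(uu^x) > 2$, and since $uu^x = u \cdot x^{-1}ux = u^{-1}x^{-1}ux = [u,x]$, the lemma follows.

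There is no real obstacle here; the only thing to be a touch careful with is separating the two cases $uu^x = 1$ and $uu^x$ an involution, both of which must be handled to derive the commutation statement. The argument relies only on $T$ being non-abelian simple, which is the standing assumption for simple groups in this paper.
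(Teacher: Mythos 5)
Your proof is correct and follows essentially the same approach as the paper's: derive that $u$ commutes with all its conjugates, hence with $\langle u^T\rangle = T$ by simplicity, forcing $u\in Z(T)=1$, a contradiction. You merely spell out in more detail the elementary step that order $\le 2$ for $uu^x$ implies $[u,u^x]=1$, which the paper leaves implicit.
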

\begin{proof}
Suppose that $uu^x$ has order less than or equal to $2$ for all $x\in T$. Then $u$ commutes with all of its conjugates, hence it commutes with $\langle u^T\rangle=T$, as $T$ is simple, and so $u\in Z(T)$. This is a contradiction. 
\end{proof}

\begin{proof}[Proof of Theorem \ref{uppersk}]
For $k=2$ conclusion follows from Lemma \ref{t2s2} so assume $k\geq 3.$
Let $\Gamma$ be an orbital graph of $G.$

As we already covered the case of $\Gamma_0^t$ in Lemma \ref{lemma0}, we may assume that $\Gamma=\{D_A,D_A(1^i,t_{i+1},\dots,t_k)\}^{G}$ where $i\leq k-2$ and $t_j\in T\setminus 1$ for $j\geq i+1$. \par 
Suppose there is a path of length at most $m$ from $D_A$ to $D_A(m_1,\dots,m_k)$ where $m_i\in T.$ Recall our notation for this:  \begin{equation}\label{eq1}
    D_A\frac{\hspace{1cm}m\hspace{1cm}}{}D_A(m_1,\dots,m_k)
\end{equation} 
Applying $(m_1^{-1},\dots,m_k^{-1})$ we get 
\begin{equation}\label{eq2}
    D_A\frac{\hspace{1cm}m\hspace{1cm}}{}D_A(m_1^{-1},\dots,m_k^{-1}).
\end{equation}
If we apply $(a^k)$ to (\ref{eq1}) or (\ref{eq2}) for any $a\in T,$ we get \begin{equation}\label{eq3}
    D_A\frac{\hspace{1cm}m\hspace{1cm}}{}D_A(m_1^{\pm a},\dots,m_k^{\pm a}). 
\end{equation}  
\underline{Claim 1} There exists $t\in T\setminus 1$ and a path in $\Gamma$ of the following form;
$$ D_A\frac{\hspace{1cm}2\hspace{1cm}}{}D_A(1^{k-2},t^{-1},t).$$  
\underline{Proof of Claim 1} 
 \par 
Suppose first that there exist $l,j\geq i+1$ such that $t_l\neq t_j.$ Apply the permutation $(l\,j)$ and $(1^i,t_{i+1},\dots,t_k)$ to the edge $D_A\frac{\hspace{1cm}\hspace{1cm}}{}D_A(1^i,t_{i+1}^{-1},\dots,t_k^{-1})$ to get a path
$$D_A\frac{\hspace{1cm}\hspace{1cm}}{}D_A(1^i,t_{i+1},\dots,t_k)\frac{\hspace{1cm}\hspace{1cm}}{}D_A(1^{l-1},t_j^{-1}t_l,1^{j-l-1},t_l^{-1}t_j,1^{k-j}).$$ 
Putting $u=t_j^{-1}t_l$ and applying a suitable permutation yields Claim 1 in this case. 
Now assume $t_{i+1}=\dots=t_k=t.$ Apply $(1\,k)$ and
$(1^i,t^{k-i})$ to the edge $D_A\frac{\hspace{1cm}\hspace{1cm}}{}D_A(1^i,(t^{-1})^{k-i})$ to get 
$$D_A\frac{\hspace{1cm}\hspace{1cm}}{}D_A(1^i,t^{k-i})\frac{\hspace{1cm}\hspace{1cm}}{}D_A(t^{-1},1^{k-2},t).$$  Claim 1 again follows.
\newline \newline
\underline{Claim 2} There exists $t'\in T\setminus 1$ and a path in $\Gamma$ $$ D_A\frac{\hspace{1cm} 24c\hspace{1cm}}{}D_A(1^{k-1},t')$$ where   $c=c_i(T).$ \newline\newline
\underline{Proof of Claim 2} We know from Claim 1 that for some $t\in T\setminus1$ there is a path $$ D_A\frac{\hspace{1cm}2\hspace{1cm}}{}D_A(1^{k-2},t^{-1},t).$$ Let $c_u\leq c$ such that $u=t^{\pm a_1}\dots t^{\pm a_{c_u}}$ is an involution in $T$ where $a_i\in T.$ We can construct a path in a similar way as in the proof of Claim 3 of Lemma \ref{lemma0} to get that 
$$D_A\frac{\hspace{1cm}2c\hspace{1cm}}{}D_A(1^{k-2},t^{\pm a_1}\dots t^{\pm a_{c_u}},t^{\mp a_1}\dots t^{\mp a_{c_u}})=D_A(1^{k-2},u,u').$$ If there exist $b_1,b_2\in T$ such that $u^{b_1}u^{b_2}=1$ and $u'^{ n_1}u'^{ b_2}\neq 1$ then $$D_A\frac{\hspace{1cm}4c\hspace{1cm}}{}D_A(1^{k-1},u'^{ b_1}u'^{ b_2})\neq D_A.$$ 
If no such $b_1,b_2$ exist then for $b\in T$, $uu^b=1$ implies $u'u'^b=1.$ Hence $u'$ is also an involution and $C_G(u)=C_G(u').$ So assume now that this is the case. Applying $(1^{k-2},u',u)$ to $D_A\frac{\hspace{1cm}2c\hspace{1cm}}{}D_A(1^{k-2},u,u'),$ we see that here is a path 
  $$D_A\frac{\hspace{1cm}4c\hspace{1cm}}{}D_A(1^{k-2},w,w),$$ where $w\neq 1$ and either $w=u=u'$ or $w=uu'$. Note that if $k=3$, then the claim follows, so assume $k\geq 4$.\par
We know that $w$ is an involution, as $u$ and $u'$ commute. By Lemma \ref{twoinvolutions} there is $x\in T$ such that $ww^x$ has order strictly greater than 2. Now $$D_A\frac{\hspace{1cm}4c\hspace{1cm}}{}D_A(1^{k-2},w^x,w^x).$$ Apply $(1^{k-2},w,w)$ to get $$D_A\frac{\hspace{1cm}4c\hspace{1cm}}{}D_A(1^{k-2},w,w)\frac{\hspace{1cm}4c\hspace{1cm}}{}D_A(1^{k-2},w^xw,w^xw).$$ Similarly, $$D_A\frac{\hspace{1cm}8c\hspace{1cm}}{}D_A(1^{k-2},ww^x,ww^x).$$ Let $h=ww^x$. Then $h^{-1}=w^xw$, so we have $$D_A\frac{\hspace{1cm}8c\hspace{1cm}}{}D_A(1^{k-2},h^{-1},h^{-1}).$$ Apply $(1^{k-3},h,h,1)$ to get $$D_A(1^{k-3},h,h,1)\frac{\hspace{1cm}8c\hspace{1cm}}{}D_A(1^{k-3},h,1,h^{-1}),$$ so  $$D_A\frac{\hspace{1cm}16c\hspace{1cm}}{}D_A(1^{k-3},h,1,h^{-1}).$$ Apply $(1^{k-3},h^{-1},1,h^{-1})$ to get $$D_A(1^{k-3},h^{-1},1,h^{-1})\frac{\hspace{1cm}16c\hspace{1cm}}{}D_A(1^{k-3},1,1,h^{-2}),$$ so $$D_A\frac{\hspace{1cm}24c\hspace{1cm}}{}D_A(1^{k-3},1,1,h^{-2}),$$ where $h^{-2}\neq 1$. Hence Claim 2 follows. \par 
At this point we can apply Lemma \ref{lemma0} to deduce that the diameter of $\Gamma$ is bounded above by $24(k-1)c^2,$ completing the proof. 
\end{proof}

\section*{Acknowledgements}
This paper forms part of work towards a PhD degree under the supervision of Professor Martin Liebeck at Imperial College London, and the author wishes to thank
him for his direction and support throughout. The author is also very thankful to EPSRC
for their financial support.

\bibliographystyle{plain}

\begin{thebibliography}{10}

\bibitem{aradherzogconjugacy}
Z.~Arad and M.~Herzog, editors.
\newblock {\em Products of conjugacy classes in groups}, volume 1112 of {\em
  Lecture Notes in Mathematics}.
\newblock Springer-Verlag, Berlin, 1985.

\bibitem{coveringarticle1}
Z.~Arad, J.~Stavi, and M.~Herzog.
\newblock Powers and products of conjugacy classes in groups.
\newblock In {\em \cite{aradherzogconjugacy}}, pages 6--51. Springer, Berlin,
  1985.

\bibitem{smallcovering}
Zvi Arad, David Chillag, and Gadi Moran.
\newblock Groups with a small covering number.
\newblock In {\em \cite{aradherzogconjugacy}}, pages 222--244. Springer,
  Berlin, 1985.

\bibitem{BERTRAM200187}
Edward Bertram and Marcel Herzog.
\newblock Powers of cycle-classes in symmetric groups.
\newblock {\em J. Comb. Theory, Series A}, 94:87 -- 99, 2001.

\bibitem{permutationgroups}
John~D. Dixon and Brian Mortimer.
\newblock {\em Permutation groups}, volume 163 of {\em Graduate Texts in
  Mathematics}.
\newblock Springer-Verlag, New York, 1996.

\bibitem{dvircovering}
Yoav Dvir.
\newblock Covering properties of permutation groups.
\newblock In {\em \cite{aradherzogconjugacy}}, pages 197--221. Springer,
  Berlin, 1985.

\bibitem{upperrank}
Erich~W. Ellers, Nikolai Gordeev, and Marcel Herzog.
\newblock Covering numbers for {C}hevalley groups.
\newblock {\em Israel J. Math.}, 111:339--372, 1999.

\bibitem{noltellers}
Erich~W. Ellers and Wolfgang Nolte.
\newblock Bireflectionality of orthogonal and symplectic groups.
\newblock {\em Arch. Math. (Basel)}, 39:113--118, 1982.

\bibitem{enomoto}
Hikoe Enomoto.
\newblock The characters of the finite {C}hevalley group {$G_{2}(q),q=3^{f}$}.
\newblock {\em Japan. J. Math.}, pages 191--248, 1976.

\bibitem{galt}
A.~A. Galt.
\newblock Strongly real elements in finite simple orthogonal groups.
\newblock {\em Sibirsk. Mat. Zh.}, 51:241--248, 2010.

\bibitem{GH96}
M.~Geck, G.~Hiss, F.~L{\"u}beck, G.~Malle, and G.~Pfeiffer.
\newblock {\sf CHEVIE} -- {A} system for computing and processing generic
  character tables for finite groups of {L}ie type, {W}eyl groups and {H}ecke
  algebras.
\newblock {\em Appl. Algebra Engrg. Comm. Comput.}, 7:175--210, 1996.

\bibitem{gowr}
R.~Gow.
\newblock Products of two involutions in classical groups of characteristic
  {$2$}.
\newblock {\em J. Algebra}, 71:583--591, 1981.

\bibitem{gow88}
R.~Gow.
\newblock Commutators in the symplectic group.
\newblock {\em Arch. Math. (Basel)}, 50:204--209, 1988.

\bibitem{huppert}
Bertram Huppert.
\newblock Singer-zyklen in klassischen gruppen.
\newblock {\em Math. Z. 117, 141-150)}, 1970.

\bibitem{karni}
Shimy Karni.
\newblock Covering numbers of groups of small order and sporadic groups.
\newblock In {\em \cite{aradherzogconjugacy}}, pages 52--196. Springer, Berlin,
  1985.

\bibitem{kleidman}
Peter Kleidman.
\newblock The maximal subgroups of the finite 8-dimensional orthogonal groups
  {${\rm P\Omega}_8^+(q)$} and of their automorphism groups.
\newblock {\em J. Algebra}, 110:173--242, 1987.

\bibitem{koles}
S.~G. Kolesnikov and Ja.~N. Nuzhin.
\newblock On strong reality of finite simple groups.
\newblock {\em Acta Appl. Math.}, 85:195--203, 2005.

\bibitem{unijordan}
R.~Lawther.
\newblock Jordan block sizes of unipotent elements in exceptional algebraic
  groups.
\newblock {\em Comm. in Alg.}, 23:4125--4156, 1995.

\bibitem{LAWTHER1998118}
R.~Lawther and Martin~W. Liebeck.
\newblock On the diameter of a {C}ayley graph of a simple group of {L}ie type
  based on a conjugacy class.
\newblock {\em J. Combin. Theory Ser. A}, 83:118--137, 1998.

\bibitem{lev}
Arieh Lev.
\newblock The covering number of the group {${\rm PSL}_n(F)$}.
\newblock {\em J. Algebra}, 182:60--84, 1996.

\bibitem{orbdiam}
Martin~W. Liebeck, Dugald Macpherson, and Katrin Tent.
\newblock Primitive permutation groups of bounded orbital diameter.
\newblock {\em Proc. Lond. Math. Soc.}, 100:216--248, 2010.

\bibitem{onaan}
Martin~W. Liebeck, Cheryl~E. Praeger, and Jan Saxl.
\newblock On the {O}'{N}an-{S}cott theorem for finite primitive permutation
  groups.
\newblock {\em J. Austral. Math. Soc. Ser. A}, 44:389--396, 1988.

\bibitem{MALCOLM2018297}
Alexander~J. Malcolm.
\newblock The involution width of finite simple groups.
\newblock {\em J. Algebra}, 493:297 -- 340, 2018.

\bibitem{orevkov}
S.~Yu. Orevkov.
\newblock Products of conjugacy classes in finite unitary groups {$GU(3,q^2)$}
  and {$SU(3,q^2)$}.
\newblock {\em Ann. Fac. Sci. Toulouse Math.}, 22:219--251, 2013.

\bibitem{ramo}
Johanna R\"{a}m\"{o}.
\newblock Strongly real elements of orthogonal groups in even characteristic.
\newblock {\em J. Group Theory}, 14:9--30, 2011.

\bibitem{atiqa}
Atiqa Sheikh.
\newblock Orbital diameters of the symmetric and alternating groups.
\newblock {\em J. Algebraic Combin.}, 45:1--32, 2017.

\bibitem{realsimplegroups}
Pham Tiep and Alexandre Zalesski.
\newblock Real conjugacy classes in algebraic groups and finite groups of lie
  type.
\newblock {\em J. Group Theory}, pages 291--315, 2005.

\bibitem{3d4}
E.~P. Vdovin and A.~A. Galt.
\newblock Strong reality of finite simple groups.
\newblock {\em Sibirsk. Mat. Zh.}, 51:769--777, 2010.

\bibitem{zisser}
Ilan Zisser.
\newblock The covering numbers of the sporadic simple groups.
\newblock {\em Israel J. Math.}, 67:217--224, 1989.

\end{thebibliography}

\pagebreak

\end{document}